\def\thesection{\arabic{section}}
\def\theequation{\thesection.\arabic{equation}}
\newcommand{\Om} {\Omega}
\newcommand{\noi} {\noindent}
\newcommand{\na} {\nabla}
\markboth{\small } {\small Mixed local-nonlocal singular problem in the Heisenberg group}
\def\theequation{\@arabic{\c@section}.\@arabic{\c@equation}}
\newtheorem{Theorem}{Theorem}[section]
\newtheorem{Lemma}[Theorem]{Lemma}
\newtheorem{Remark}[Theorem]{Remark}
\newtheorem{Definition}[Theorem]{Definition}
\begin{document}

{\vspace{0.01in}}

\title{Mixed local-nonlocal $p$-Laplace equation with variable singular nonlinearity in the Heisenberg group}

\author{Prashanta Garain\footnote{Department of Mathematical Sciences\\
Indian Institute of Science Education and Research Berhampur\\ Permanent Campus, At/Po:-Laudigam, Dist.-Ganjam\\
Odisha, India-760003\\
\textsf{Email}: pgarain92@gmail.com}
}

\maketitle

\begin{abstract}\noindent
We investigate a mixed local-nonlocal $p$-Laplace equation on the Heisenberg group, where the nonlinear term features a variable singular exponent. Our analysis establishes the existence, uniqueness, and regularity of weak solutions under suitable structural assumptions. To the best of our knowledge, this work provides the first treatment of such mixed local-nonlocal problems in a non-commutative setting, even in the linear case $p=2$ with a constant singular exponent.
\end{abstract}

\maketitle

\noi {Keywords: Mixed local-nonlocal $p$-Laplace equation, variable singular exponent, existence, uniqueness, regularity, Heisenberg group.}

\noi{\textit{2020 Mathematics Subject Classification: 35J62, 35J75, 35R03, 35R11, 35A02.}

\bigskip

\tableofcontents

\section{Introduction and main results}
\subsection{Introduction}
In this article, we investigate a class of mixed local-nonlocal $p$-Laplace equations with variable singular exponent posed on the Heisenberg group $\mathbb{H}^N$. More precisely, we consider the problem
\begin{equation}\label{meqn}
M_{\alpha}\,u=f(x)\,u^{-\delta(x)} \quad \text{in }\Omega,\qquad
u>0 \text{ in }\Omega,\qquad
u=0 \text{ in }\mathbb{H}^N\setminus\Omega,
\end{equation}
where $\Omega\subset \mathbb{H}^N$ is a bounded smooth domain, $0<s<1<p<Q$, with $Q=2N+2$ being the homogeneous dimension of $\mathbb{H}^N$ and
\[
M_\alpha u = -\Delta_{H,p} u + \alpha (-\Delta_{H,p})^s u
\]
denotes a mixed operator combining the local and fractional $p$-subLaplace components. Here, $\alpha> 0$, and 
\[
\Delta_{H,p} u = \operatorname{div}(|\nabla_H u|^{p-2}\nabla_H u)
\]
is the $p$-subLaplacian, while the fractional counterpart is defined by
\[
(-\Delta_{H,p})^s u(x) = \mathrm{P.V.}\int_{\mathbb{H}^N}
\frac{|u(x)-u(y)|^{p-2}(u(x)-u(y))}{|y^{-1}\circ x|^{Q+sp}}\,dy,
\]
with $\mathrm{P.V.}$ indicating the principal value. The data satisfy $f\in L^m(\Omega)\setminus\{0\}$ for some $m\ge 1$, and the singular exponent $\delta:\overline{\Omega}\to(0,\infty)$ is continuous (see the main results for precise hypotheses). The singularity in \eqref{meqn} stems from the blow-up behavior of $u^{-\delta(x)}$ as $u\to 0^{+}$.

Singular elliptic equations have been an active area of research for more than three decades, particularly within the Euclidean setting, where both purely local and purely nonlocal models have been extensively explored. In the local Euclidean case, corresponding to $-\Delta_p=\text{div}(|\na u|^{p-2}\na u)$ in place of $M_\alpha$, equation~\eqref{meqn} and its variants has been studied for a wide range of assumptions on the datum $f$ and for both constant and spatially varying singular exponents $\delta$. For constant $\delta>0$ in the semilinear case $p=2$, we refer to \cite{Boc-Or, CRT, Lz}; for the quasilinear range $1<p<\infty$, we point to \cite{Canino, DeCave, Garainmn} and the references therein. When $\delta$ is allowed to vary spatially, the first contribution is due to \cite{CMP} for $p=2$, followed by several important developments such as \cite{Alves, BGM, Garainmm}.

In recent years, the nonlocal regime---that is, the problem
\begin{equation}\label{meqnoc}
\begin{split}
(-\Delta_p)^s u&:= \mathrm{P.V.}\int_{\mathbb{H}^N}
\frac{|u(x)-u(y)|^{p-2}(u(x)-u(y))}{|y^{-1}\circ x|^{Q+sp}}\,dy
=f(x)\,u^{-\delta(x)}\quad \text{in }\Omega,\\
u&>0 \text{ in }\Omega,\qquad u=0 \text{ in }\mathbb{R}^N\setminus\Omega,
\end{split}
\end{equation}
and its variants has likewise received considerable attention. For constant singular exponents $\delta>0$ and $p=2$, we refer to \cite{Nsop, Ncvpde}, while for $1<p<\infty$, the works \cite{Caninoetal, Nmn, Mukanona1, Mukanona2} offer a broad overview. Contributions dealing with variable singular exponents in the nonlocal context may be found in \cite{Garaincpaa} and the references therein.

A natural intermediate setting arises when both local and nonlocal effects are present. In the mixed local–nonlocal regime, for $M_{1}$ replaces with
\[
-\Delta_p + (-\Delta_p)^s,
\]
singular problems have been recently addressed in the Euclidean framework. For constant $\delta>0$ in the semilinear case $p=2$, we refer to \cite{AGmz, ARad, Vecchi, Garainjga}; for the quasilinear case $1<p<\infty$, see \cite{BalDas1, Dhanya, GU21} and the references therein. Results for variable singular exponents $\delta(x)$ in this mixed setting appear in \cite{Biroud, GKK}.

Despite this substantial progress, \emph{mixed local–nonlocal singular problems in non-Euclidean geometries remain completely unexplored}. To the best of our knowledge, the present work constitutes the first attempt to study such equations in the non-Euclidean geometry of the Heisenberg group. Before presenting our main results, let us briefly outline the existing literature in the purely local and purely nonlocal cases under non-Euclidean settings.

For the local singular problem
\[
-\Delta_p u = f(x) u^{-\delta} \quad \text{in }\Omega,\qquad 
u>0 \ \text{in }\Omega,\qquad 
u=0 \ \text{on }\partial\Omega,
\]
posed in a bounded domain $\Omega$ of a Carnot group $G$, existence results for constant $\delta\in(0,1)$ and nonnegative $f\in L^1(\Omega)$ were obtained in \cite{GUaamp}. We also refer to \cite{GK, KD, Pucciacv, Sahu, WW} for related developments. In the purely nonlocal setting, we mention \cite{GKR}, where existence was established for constant $\delta\in(0,1)$ and multiplicity results were obtained for a perturbed nonlocal equation on a stratified Lie group. Very recent progress in the Heisenberg group, including both constant and variable singular exponents, can be found in \cite{GarainHein}.

For the mixed local–nonlocal operator, the only available results in non-Euclidean geometries are \cite{DC, Zhang, Zhang2}, where existence and regularity was established for non-singular equations. Importantly, the regularity estimate obtained in \cite{Zhang} forms a cornerstone of our approach to the singular problem \eqref{meqn}. Our strategy for proving existence follows the classical approximation method of \cite{Boc-Or}, suitably adapted to handle variable exponents as in \cite{CMP} and the mixed structure as in \cite{GU21}. We construct approximate solutions and show that they remain uniformly positive on compact subsets $\omega\Subset\Omega$, thanks to a weak Harnack inequality inspired by \cite{Zhang}. To pass to the limit in the nonlinear terms, we develop a gradient convergence result (Theorem~\ref{thm:A1}). Uniqueness follows from an appropriate comparison principle, in the spirit of \cite{Caninouni}. Finally, we establish regularity results that depend on the behavior of the singular exponent $\delta$, based on suitable \emph{a priori} estimates for the approximate solutions.

\subsection{Notations and standing assumptions}

We collect below the main notations and assumptions that will be used throughout the paper unless otherwise mentioned.

\begin{itemize}
    \item For $l > 1$, we denote by $l' = \frac{l}{l-1}$ the conjugate exponent of $l$.

    \item We assume $0 < s < 1$ and $1 < p <  Q$, where $Q = 2N + 2$ denotes the homogeneous dimension of $\mathbb{H}^N$.

    \item The critical Sobolev exponent is given by
    \[
    p^* = \frac{Qp}{Q - p}, \qquad 1 < p < Q.
    \]

    \item For $q > 1$, we define $J_q(t) = |t|^{q-2}t$ for all $t \in \mathbb{R}$.

    \item We use the notation 
    \[
    d\mu = \frac{dx\, dy}{|y^{-1} \circ x|^{Q + sp}}.
    \]

    \item The symbol $\Omega$ denotes a bounded smooth domain in $\mathbb{H}^N$.

    \item By $\omega \Subset \Omega$, we mean that $\omega$ is compactly contained in $\Omega$, i.e.\ $\omega \subset \overline{\omega} \subset \Omega$.

    \item For $u \in HW_0^{1,p}(\Omega)$, the norm of $u$ (defined below in \eqref{eqnorm}) is written as
    \[
        \|u\| := \|u\|_{HW_0^{1,p}(\Omega)}
        = \left( \int_{\Omega} |\nabla_H u|^p \, dx 
        + \alpha\int_{\mathbb{H}^N} \int_{\mathbb{H}^N} 
        \frac{|u(x) - u(y)|^p}{|y^{-1} \circ x|^{Q + sp}} \, dx \, dy \right)^{1/p}.
    \]
   \item  If $u \in L^{p}(\Omega)$ for some $1 \le p \le \infty$, we denote its $L^p$-norm by $\|u\|_{L^{p}(\Omega)}$.

    \item For $k \in \mathbb{R}$, we use the standard notation 
    \[
    k^{+} = \max\{k,0\}, \qquad 
    k^{-} = \max\{-k,0\}, \qquad 
    k_{-} = \min\{k,0\}.
    \]

    \item For a measurable function $F$ on a set $S$ and constants $c,d$, the notation $c \le F \le d$ in $S$ means that $c \le F(x) \le d$ for almost every $x \in S$.

    \item The symbol $C$ denotes a generic positive constant whose value may change from line to line. If the constant depends on certain parameters $r_1, r_2, \ldots, r_k$, we write $C = C(r_1, r_2, \ldots, r_k)$.
\end{itemize}

Before stating our main results, we define the condition $(P_{\epsilon,\delta_*})$ and we fix a given $\alpha\geq 0$ for the problem \eqref{meqn}.

\textbf{Condition $(P_{\epsilon,\delta_*})$:} 
A continuous function $\delta : \overline{\Omega} \to (0,\infty)$ is said to satisfy $(P_{\epsilon,\delta_*})$ if there exist constants $\delta_* \geq 1$ and $\epsilon > 0$ such that 
\[
\delta(x) \leq \delta_* \quad \text{for all } x \in \Omega_\epsilon,
\]
where
\[
\Omega_\epsilon := \{\, y \in \Omega : \mathrm{dist}(y, \partial\Omega) < \epsilon \,\}.
\]
In addition, we set
\begin{equation}\label{deltanbd2}
\omega_\epsilon := \Omega \setminus \overline{\Omega_\epsilon}, \qquad \epsilon > 0.
\end{equation}

\subsection{Main results}

\textbf{Existence:} Our first existence result reads as follows: 
\begin{Theorem}\label{varthm1}(\textbf{Variable singular exponent})
Let $\delta : \overline{\Omega} \to (0,\infty)$ be a continuous function satisfying the condition $(P_{\epsilon,\delta_*})$ for some $\epsilon>0$ and $\delta_* \geq 1$. Then the problem \eqref{meqn} admits a weak solution 
\[
u \in HW_{\mathrm{loc}}^{1,p}(\Omega) \cap L^{p-1}(\Omega)
\]
such that 
\[
u^{\frac{\delta_{*}+p-1}{p}} \in HW_0^{1,p}(\Omega),
\]
provided that $f \in L^{m}(\Omega)\setminus\{0\}$ is nonnegative, where 
\[
m = \Bigg( \frac{(\delta_* + p - 1)p^{*}}{p \delta_*} \Bigg)' .
\]
\end{Theorem}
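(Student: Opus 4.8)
The plan is to follow the classical Boccardo--Orsino approximation scheme, adapted to the mixed operator $M_\alpha$ and the variable exponent $\delta$. First I would regularize the singular term: for $n \in \mathbb{N}$, consider the non-singular problem $M_\alpha u_n = f_n(x)\,(u_n + \tfrac{1}{n})^{-\delta(x)}$ in $\Omega$ with $u_n = 0$ in $\mathbb{H}^N \setminus \Omega$, where $f_n = \min\{f, n\}$. Since the right-hand side is bounded (the datum $f_n \le n$ and the nonlinearity $(t+\tfrac1n)^{-\delta(x)} \le n^{\delta_*}$ whenever $\delta(x) \le \delta_*$, with an obvious modification where $\delta$ is large but bounded below by a positive continuous function on $\overline\Omega$), existence of a weak solution $u_n \in HW_0^{1,p}(\Omega)$ follows from standard monotone operator theory (Minty--Browder), and by a comparison/maximum principle argument $u_n \ge 0$ and $u_n$ is monotone increasing in $n$.

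The second step is to obtain the two key a priori bounds. By testing the equation with $u_n$ itself one gets $\|u_n\| \le C$ only in the favorable range; in general the correct test function is $(u_n + \tfrac1n)^{\delta_* + p - 1} - (\tfrac1n)^{\delta_*+p-1}$, or rather one works with $v_n := u_n^{(\delta_*+p-1)/p}$. Testing with a power of $u_n + \tfrac1n$ and using the algebraic inequality relating $|\nabla_H u_n|^p (u_n+\tfrac1n)^{\delta_*+p-2}$ to $|\nabla_H v_n|^p$, together with the nonlocal analogue (a discrete convexity/monotonicity inequality for the fractional part $J_p(u_n(x)-u_n(y))\cdot((u_n+\tfrac1n)(x)^{\theta} - (u_n+\tfrac1n)(y)^{\theta})$), yields
\[
\int_\Omega |\nabla_H v_n|^p \, dx + \alpha \iint \frac{|v_n(x)-v_n(y)|^p}{|y^{-1}\circ x|^{Q+sp}}\,dx\,dy \le C \int_\Omega f\, (u_n+\tfrac1n)^{\delta(x)-\delta_*}\, u_n^{\delta_*+p-1-\delta(x)}\,dx.
\]
Here the exponent $m = \big(\tfrac{(\delta_*+p-1)p^*}{p\delta_*}\big)'$ is exactly what is needed so that, after splitting $\Omega$ into $\Omega_\epsilon$ (where $\delta \le \delta_*$, so the integrand is controlled by $f\, v_n^{p^*(\delta_*+p-1-\cdots)/\cdots}$ and absorbed via Sobolev $HW_0^{1,p}\hookrightarrow L^{p^*}$ and Hölder with exponent $m$) and $\omega_\epsilon$ (where $u_n$ is bounded below by a weak Harnack inequality inspired by \cite{Zhang}, making the integrand harmless), one closes the estimate and concludes $\|v_n\| \le C$ uniformly. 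This gives $u_n^{(\delta_*+p-1)/p}$ bounded in $HW_0^{1,p}(\Omega)$ and, since $\delta_*\ge 1$, also $u_n$ bounded in $L^{p-1}(\Omega)$.

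The third step is the uniform-positivity-on-compact-sets estimate: for each $\omega \Subset \Omega$ there is $c_\omega > 0$, independent of $n$, with $u_n \ge c_\omega$ a.e.\ in $\omega$. This is where the weak Harnack inequality from the regularity theory of \cite{Zhang} (for the mixed operator with a nonnegative bounded right-hand side, which $f_n(u_n+\tfrac1n)^{-\delta}$ is for fixed $n$) enters: one compares $u_n$ with the solution of $M_\alpha w = f_1 (c+1)^{-\delta}$ on a subdomain and uses that $f\not\equiv 0$. With this, the singular nonlinearity $f_n(u_n+\tfrac1n)^{-\delta(x)}$ is, on $\omega$, dominated by $f\,c_\omega^{-\delta}\in L^1_{\loc}$, so the a priori bounds localize and give $u_n \rightharpoonup u$ weakly in $HW^{1,p}_{\loc}(\Omega)$ along a subsequence, with $u \ge c_\omega > 0$ on each $\omega$ and $u^{(\delta_*+p-1)/p}\in HW_0^{1,p}(\Omega)$.

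The final and hardest step is passing to the limit in the nonlinear terms to show $u$ is a weak solution. Monotone convergence handles the right-hand side against a fixed test function $\varphi \in C_c^\infty(\Omega)$ once one knows $\nabla_H u_n \to \nabla_H u$ a.e., because then $|\nabla_H u_n|^{p-2}\nabla_H u_n \to |\nabla_H u|^{p-2}\nabla_H u$ and the nonlocal integrand converges by the $a.e.$ convergence plus the uniform $HW^{1,p}$ bound (truncating and using that $\varphi$ has compact support to control the Tail term). The a.e.\ gradient convergence is precisely the content of the gradient convergence result Theorem~\ref{thm:A1}, which I would invoke directly. The main obstacle is therefore orchestrating Step~2 and Step~3 correctly: one must choose the exponent in the test function to match $m$ exactly, control the bad set $\omega_\epsilon$ via the weak Harnack estimate uniformly in $n$, and handle the nonlocal term's cross-interactions (the integral over $\mathbb{H}^N\times\mathbb{H}^N$ versus the test function supported in $\Omega$) via the standard Tail estimates in $\mathbb{H}^N$; the rest is assembling these ingredients in the order above.
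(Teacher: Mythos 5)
Your proposal follows essentially the same approach as the paper: Boccardo--Orsina regularization, a priori bounds on $u_n^{(\delta_*+p-1)/p}$ in $HW_0^{1,p}(\Omega)$ obtained by testing with a power of $u_n$ and splitting $\Omega$ along $\Omega_\epsilon$ versus $\omega_\epsilon$, uniform positivity on compact subsets via the weak Harnack inequality of \cite{Zhang}, and the gradient convergence theorem (Theorem~\ref{thm:A1}) to pass to the limit. One slip in the details: the correct test function is $u_n^{\delta_*}$, so that $\int |\nabla_H u_n|^{p-2}\nabla_H u_n\cdot\nabla_H u_n^{\delta_*}\,dx=\delta_*\int u_n^{\delta_*-1}|\nabla_H u_n|^p\,dx$ is a constant multiple of $\int |\nabla_H u_n^{(\delta_*+p-1)/p}|^p\,dx$ and Lemma~\ref{BPalg} gives the nonlocal analogue; your stated exponent $\delta_*+p-1$ and the weight $(u_n+\tfrac1n)^{\delta_*+p-2}$ are inconsistent with the target $v_n=u_n^{(\delta_*+p-1)/p}$ (they would produce $u_n^{(\delta_*+2p-2)/p}$) and would not match the given $m$. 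Aside from that, and the unnecessary appeal to Tail estimates (the paper passes to the limit in the nonlocal term simply via weak $L^{p'}$-convergence of the kernel expression combined with a.e.\ convergence), the ingredients and their orchestration coincide with the paper's proof.
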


Our next existence result concerns the case of a constant singular exponent.

\begin{Theorem}\label{thm1}(\textbf{Constant singular exponent})
Suppose that $\delta : \overline{\Omega} \to (0,\infty)$ is a constant function, and let  
$f \in L^{m}(\Omega)\setminus\{0\}$ be nonnegative for some $m$. Then the problem \eqref{meqn} admits a weak solution $u$ in each of the following cases:
\begin{enumerate}
    \item[(a)] If $0 < \delta < 1$ and 
    \[
    m = \left( \frac{p^{*}}{1-\delta} \right)',
    \]
    then $u \in HW_0^{1,p}(\Omega)$.
    
    \item[(b)] If $\delta = 1$ and $m = 1$, then $u \in HW_0^{1,p}(\Omega)$.
    
    \item[(c)] If $\delta > 1$ and $m = 1$, then 
    \[
    u \in HW_{\mathrm{loc}}^{1,p}(\Omega) \cap L^{p-1}(\Omega)
    \quad \text{and} \quad
    u^{\frac{\delta + p - 1}{p}} \in HW_0^{1,p}(\Omega).
    \]
\end{enumerate}
\end{Theorem}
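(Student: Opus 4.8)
The plan is to obtain all three cases from the Boccardo--Orsino approximation scheme \cite{Boc-Or}, adapted to variable exponents as in \cite{CMP} and to the mixed operator as in \cite{GU21}, the regime-dependent part being only the choice of test function in the uniform a priori estimate, which is what permits weakening the integrability assumption on $f$ relative to Theorem~\ref{varthm1}. First I would fix, for $n\in\N$, the truncations $f_n=\min\{f,n\}$ and solve the approximate problems
\[
M_\alpha u_n=\frac{f_n}{\big(u_n+\tfrac1n\big)^{\delta}}\ \text{ in }\Omega,\qquad u_n=0\ \text{ in }\mathbb{H}^N\setminus\Omega;
\]
since the right-hand side is bounded and nonincreasing in $u_n$, existence, uniqueness and boundedness of $u_n\in HW_0^{1,p}(\Omega)$ follow from standard monotone-operator theory together with the regularity estimate of \cite{Zhang}. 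Next, the comparison principle (in the spirit of \cite{Caninouni}) and the monotonicity of $n\mapsto f_n(t+\tfrac1n)^{-\delta}$ give $0<u_1\le u_n\le u_{n+1}$ in $\Omega$, so that $u:=\lim_n u_n=\sup_n u_n$ is well defined; a weak Harnack inequality inspired by \cite{Zhang} then provides, for each $\omega\Subset\Omega$, a constant $c_\omega>0$ with $u_n\ge c_\omega$ on $\omega$ uniformly in $n$, hence $u\ge c_\omega>0$ on $\omega$.

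The heart of the argument is the uniform a priori bound, whose form depends on the regime. In case (a) I would test with $u_n$ itself, getting $\|u_n\|^p\le\int_\Omega f_n u_n^{1-\delta}\le\|f\|_{L^m(\Omega)}\|u_n\|_{L^{p^*}(\Omega)}^{1-\delta}$ thanks to the identity $(1-\delta)m'=p^*$ forced by the choice $m=(p^*/(1-\delta))'$; since $1-\delta<1<p$, the Sobolev inequality yields $\|u_n\|\le C$. In case (b), testing again with $u_n$ gives $\|u_n\|^p\le\int_\Omega f_n\,u_n/(u_n+\tfrac1n)\le\|f\|_{L^1(\Omega)}$. In case (c) I would instead test with $u_n^{\delta}$: the local term becomes $c(\delta,p)\int_\Omega|\nabla_H(u_n^{(\delta+p-1)/p})|^p$, the nonlocal term is nonnegative because $t\mapsto t^{\delta}$ is nondecreasing, and---crucially, because $\delta$ is \emph{constant}---the right-hand side is at most $\int_\Omega f_n\,(u_n+\tfrac1n)^{-\delta}u_n^{\delta}\le\|f\|_{L^1(\Omega)}$; with the Poincar\'e and (fractional) Sobolev inequalities this gives $\|u_n^{(\delta+p-1)/p}\|\le C$, hence $u_n$ bounded in $L^{(\delta+p-1)p^*/p}(\Omega)$ and therefore, $\Omega$ having finite measure, in $L^{p^*}(\Omega)\cap L^{p-1}(\Omega)$. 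A further localized test with $u_n\varphi^p$, $\varphi\in C_c^\infty(\Omega)$ --- using Young's inequality on the local cross term, the uniform $L^{p^*}$ bound and the compact support of $\varphi$ on the nonlocal cross term, and the lower bound $u_n\ge c_{\supp\varphi}$ to control the singular factor $(u_n+\tfrac1n)^{-\delta}u_n\le c_{\supp\varphi}^{\,1-\delta}$ there --- then yields a uniform bound for $u_n$ in $HW^{1,p}_{\loc}(\Omega)$.

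With these bounds I would extract a subsequence with $u_n\rightharpoonup u$ weakly in $HW_0^{1,p}(\Omega)$ in cases (a)--(b), respectively weakly in $HW^{1,p}_{\loc}(\Omega)$ in case (c), together with $u_n\to u$ in every subcritical $L^q(\Omega)$ and a.e., and invoke Theorem~\ref{thm:A1} to get $\nabla_H u_n\to\nabla_H u$ a.e.; this is exactly what is needed to pass to the limit in $\langle M_\alpha u_n,\phi\rangle$ for all admissible $\phi$ (in $HW_0^{1,p}(\Omega)$ for (a)--(b), in $C_c^\infty(\Omega)$ for (c)). On the right-hand side, $0\le f_n(u_n+\tfrac1n)^{-\delta}\uparrow f u^{-\delta}$ pointwise, dominated on $\supp\phi$ by $c^{-\delta}_{\supp\phi}f\in L^1$ in case (c), and in cases (a)--(b) one checks, using the energy bound and $\delta\le1$ as in \cite{CMP,GU21}, that $f u^{-\delta}$ pairs with the whole of $HW_0^{1,p}(\Omega)$; weak lower semicontinuity of the norm finally gives $u\in HW_0^{1,p}(\Omega)$ in (a)--(b) and $u^{(\delta+p-1)/p}\in HW_0^{1,p}(\Omega)$ in (c), with $u\in L^{p-1}(\Omega)$ in (c) by Fatou. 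The main obstacle will be the passage to the limit in the quasilinear terms, which hinges entirely on Theorem~\ref{thm:A1}; a secondary one, specific to case (c), is that $u$ is only locally of class $HW^{1,p}$, so one must argue with compactly supported test functions and separately verify that $f u^{-\delta}\in L^1_{\loc}(\Omega)$ and that the limiting identity is a weak solution of \eqref{meqn} in the sense appropriate to the singular datum.
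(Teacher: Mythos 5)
Your proposal is correct and follows essentially the same route as the paper: approximate by $f_n=\min\{f,n\}$ and solve via monotone operator theory, establish uniform positivity on compacta via the weak Harnack inequality of \cite{Zhang}, obtain the regime-dependent a priori bounds by testing with $u_n$ in cases (a)--(b) and with $u_n^{\delta}$ in case (c) (the paper's Lemma~\ref{apun2}), and pass to the limit using the gradient convergence result Theorem~\ref{thm:A1} together with the local lower bound and dominated convergence on the singular right-hand side. Your explicit localized test with $u_n\varphi^p$ to get the $HW^{1,p}_{\mathrm{loc}}$ bound in case (c) is a useful step that the paper leaves implicit (it is absorbed into the proof of Theorem~\ref{thm:A1}, carried out as in \cite{GU21}), and your observation that $m>(p^*/(1-\delta))'$ forces $\|u_n\|^{p-1+\delta}\le C$ is exactly the paper's estimate.
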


\textbf{Regularity:} Our regularity results are stated as follows:
\begin{Theorem}\label{regthm}(\textbf{Variable singular exponent})
Let $\delta : \overline{\Omega} \to (0,\infty)$ be a continuous function satisfying the condition $(P_{\epsilon,\delta_*})$ for some $\delta_* \geq 1$ and $\epsilon > 0$. 
Let $u$ be the weak solution to problem \eqref{meqn} provided by Theorem~\ref{varthm1}, and assume that 
$f \in L^{m}(\Omega)\setminus\{0\}$ is nonnegative for some $m$. Then:
\begin{enumerate}
    \item[(i)] If 
    \[
    m \in \left[ \frac{Q(\delta_* + p - 1)}{Q(p-1) + \delta_* p}, \, \frac{Q}{p} \right),
    \]
    then $u \in L^{t}(\Omega)$, where $t = m' \gamma$ and 
    \[
    \gamma = \frac{N(p-1)(m-1)}{Q-mp}.
    \]
    \item[(ii)] If $m > \frac{Q}{p}$, then $u \in L^{\infty}(\Omega)$.
\end{enumerate}
\end{Theorem}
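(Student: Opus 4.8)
The plan is to obtain the stated integrability by testing the approximate equations with suitable powers of the (truncated) approximate solutions $u_n$, exactly as in the Euclidean Stampacchia-type arguments adapted to the mixed setting, and then passing to the limit. Recall that by the construction in the proof of Theorem~\ref{varthm1} we have approximate solutions $u_n\nearrow u$ solving $M_\alpha u_n = f_n(x)(u_n+\tfrac1n)^{-\delta(x)}$ with $0<f_n\le f_{n+1}\le f$, $f_n\to f$ in $L^m(\Omega)$, and $u_n\ge c(\omega)>0$ on every $\omega\Subset\Omega$. The key structural fact we exploit is that the nonlocal term has a sign: for any convex, nondecreasing, Lipschitz $\Phi$ with $\Phi(0)=0$ one has
\[
\int_{\mathbb{H}^N}\int_{\mathbb{H}^N}\frac{J_p\big(u_n(x)-u_n(y)\big)\big(\Phi(u_n(x))-\Phi(u_n(y))\big)}{|y^{-1}\circ x|^{Q+sp}}\,dx\,dy\ \ge\ 0,
\]
so that when we test with $\Phi(u_n)$ the full operator $M_\alpha$ controls the local Dirichlet energy of $\Phi(u_n)$ from below; this is the mechanism by which the mixed problem inherits the local subcritical Sobolev embedding on $HW_0^{1,p}(\Omega)$ and the estimates become formally identical to the purely local Carnot-group case.

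For part (i), I would fix the exponent $\gamma=\frac{N(p-1)(m-1)}{Q-mp}>1$ (note $\gamma>1$ precisely because $m$ lies strictly above the left endpoint of the interval, and $m<Q/p$ guarantees $\gamma<\infty$) and test the $n$-th equation with $\big((u_n+\tfrac1n)^{\beta}-(\tfrac1n)^{\beta}\big)$ for the appropriate $\beta=\beta(\gamma,p)>0$ chosen so that, after using the sign of the nonlocal contribution and the chain rule $|\nabla_H(u_n+\tfrac1n)^{\frac{\beta+p-1}{p}}|^p\sim (u_n+\tfrac1n)^{\beta-1}|\nabla_H u_n|^p$, the left side dominates $\|(u_n+\tfrac1n)^{\frac{\beta+p-1}{p}}\|_{HW_0^{1,p}(\Omega)}^p$, hence by the Sobolev inequality $\|(u_n+\tfrac1n)^{\frac{(\beta+p-1)p^*}{p}}\|_{L^1}$. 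On the right side I would bound $f_n(u_n+\tfrac1n)^{-\delta(x)}\big((u_n+\tfrac1n)^{\beta}-(\tfrac1n)^\beta\big)\le f_n(u_n+\tfrac1n)^{\beta-\delta(x)}$, split according to the condition $(P_{\epsilon,\delta_*})$ into the region $\omega_\epsilon$ (where $u_n$ is bounded below, so this term is harmless) and $\Omega_\epsilon$ (where $\delta(x)\le\delta_*$, so $(u_n+\tfrac1n)^{\beta-\delta(x)}\le (u_n+\tfrac1n)^{\beta-\delta_*}+C$), and then apply Hölder with exponents $m$ and $m'$. Matching the power $m'(\beta-\delta_*)$ produced on the right with the power $\frac{(\beta+p-1)p^*}{p}$ available on the left forces the relation that yields $t=m'\gamma$; one absorbs the resulting factor of $\|(u_n+\tfrac1n)^{t}\|_{L^1}^{\theta}$ with $\theta<1$ by Young's inequality, obtaining a bound uniform in $n$. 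Letting $n\to\infty$ and using Fatou gives $u\in L^t(\Omega)$.

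For part (ii), with $m>Q/p$ I would run a De Giorgi / Stampacchia iteration: test the $n$-th equation with $G_k(u_n):=(u_n-k)^+$ for $k\ge k_0$, where $k_0$ is large enough that $u_n\ge 1$ on the set $\{u_n>k_0\}$ so that the singular right-hand side satisfies $f_n(u_n+\tfrac1n)^{-\delta(x)}\le f$ there. Dropping the nonnegative nonlocal term, the left side controls $\int_{\{u_n>k\}}|\nabla_H u_n|^p = \|\nabla_H G_k(u_n)\|_{L^p}^p$, while the right side is $\le \int f\,G_k(u_n)\le \|f\|_{L^m}\|G_k(u_n)\|_{L^{m'}}$; combining the Sobolev inequality $\|G_k(u_n)\|_{L^{p^*}}\le C\|\nabla_H G_k(u_n)\|_{L^p}$ with Hölder on the level set $A_k:=\{u_n>k\}$ produces the standard inequality
\[
\int_{A_k}|\nabla_H G_k(u_n)|^p\,dx\ \le\ C\,\|f\|_{L^m(\Omega)}^{p'}\,|A_k|^{p'\big(1-\frac1m-\frac1{p^*}\big)},
\]
and the exponent $p'(1-\tfrac1m-\tfrac1{p^*})$ exceeds $1$ exactly when $m>Q/p$, which is the hypothesis; Stampacchia's lemma then gives $\|u_n\|_{L^\infty(\Omega)}\le C(\|f\|_{L^m},k_0,\Omega)$ uniformly in $n$, and passing to the limit yields $u\in L^\infty(\Omega)$.

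The main obstacle is \emph{not} the Moser/Stampacchia bookkeeping, which is classical once the subcritical Sobolev inequality on $HW_0^{1,p}(\Omega)$ is available, but rather making the test-function manipulations rigorous despite the singularity: the natural test functions $\Phi(u_n)$ must genuinely lie in $HW_0^{1,p}(\Omega)$ (so one needs the bounds $u_n\le u$ with $u^{\frac{\delta_*+p-1}{p}}\in HW_0^{1,p}(\Omega)$ from Theorem~\ref{varthm1}, possibly via a further truncation argument and monotone passage), and the nonlocal ``good sign'' identity above must be justified for these (only Lipschitz-on-bounded-sets, unbounded-range) nonlinearities — this is handled by first truncating $\Phi$, using that $(u_n+\tfrac1n)^{-\delta(x)}$ is bounded on each fixed level set, and then letting the truncation parameter go to infinity with Fatou on the nonlocal term. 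A secondary point requiring care is checking that the arithmetic linking $\beta$, $\gamma$, $m$ and $t$ in part (i) is consistent across the whole admissible interval for $m$ and that the self-improving exponent $\theta$ stays $<1$; this is a direct computation with the definitions of $p^*$ and $\gamma$.
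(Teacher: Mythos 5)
Your part (ii) argument (De Giorgi/Stampacchia with $G_k(u_n)=(u_n-k)^+$, dropping the nonlocal term by sign, Sobolev plus H\"older on the level set, and the iteration lemma from Kinderlehrer--Stampacchia) matches the paper essentially line by line, including the observation that $m>Q/p$ is exactly what makes the level-set exponent exceed $1$; that part is fine. The overall strategy in part (i) is also the same as the paper's (test the approximate problem with a power of $u_n$, split the right-hand side using $(P_{\epsilon,\delta_*})$, apply H\"older and Sobolev, match exponents, absorb with Young); the paper applies Lemma~\ref{BPalg} to bound both local and nonlocal contributions by $\|u_n^{(\gamma+p-1)/p}\|^p$, whereas you drop the nonlocal term by its sign and keep only the local Dirichlet energy --- that is an inessential difference and either route gives the same Sobolev estimate.

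However, there is a genuine error in how you decompose the right-hand side in part (i). On $\Omega_\epsilon$, condition $(P_{\epsilon,\delta_*})$ gives $\delta(x)\le\delta_*$, which means $\beta-\delta(x)\ge\beta-\delta_*$, so the exponent you write is the \emph{larger} one when $u_n$ is small, not the smaller one. For $u_n+\tfrac1n\le 1$ the bound $(u_n+\tfrac1n)^{\beta-\delta(x)}\le (u_n+\tfrac1n)^{\beta-\delta_*}$ is correct; but for $u_n+\tfrac1n>1$ it fails badly, since there $(u_n+\tfrac1n)^{\beta-\delta(x)}$ can be as large as $(u_n+\tfrac1n)^{\beta}$ (when $\delta(x)$ is close to $0$, which $(P_{\epsilon,\delta_*})$ does not forbid) while $(u_n+\tfrac1n)^{\beta-\delta_*}+C$ grows strictly more slowly. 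The correct decomposition further splits $\Omega_\epsilon$ into $\{u_n\le1\}$ (where $u_n^{\beta-\delta(x)}\le1$ because $\beta\ge\delta_*\ge\delta(x)$) and $\{u_n>1\}$ (where $u_n^{\beta-\delta(x)}\le u_n^{\beta}$ simply because $\delta(x)>0$), so the power that survives H\"older on the right is $\beta$, not $\beta-\delta_*$. Consequently the matching relation should be $m'\beta=\frac{p^*(\beta+p-1)}{p}$, giving $\beta=\gamma$ and $t=m'\gamma$; the relation you actually wrote, $m'(\beta-\delta_*)=\frac{p^*(\beta+p-1)}{p}$, yields $\beta-\delta_*=\frac{p^*(p-1+\delta_*)}{pm'-p^*}$ and hence $t=m'(\beta-\delta_*)\neq m'\gamma$, so your arithmetic is internally inconsistent with your claim that it ``yields $t=m'\gamma$.'' With the corrected three-region split the rest of your argument (Sobolev, absorption with $\tfrac1{m'}<\tfrac{p}{p^*}$ because $m<Q/p$, Fatou in $n$) closes the proof.

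A smaller point: the parenthetical assertion that $\gamma>1$ ``precisely because $m$ lies strictly above the left endpoint'' is off --- the admissible interval is closed on the left, and what the argument actually needs (and what the endpoint gives) is $\gamma\ge\delta_*\ge1$, so that the truncation-free test function is legitimate; $\gamma>1$ strictly is not required.
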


\begin{Theorem}\label{regthm1}(\textbf{Constant singular exponent, case $0 < \delta < 1$})
Assume that $\delta : \overline{\Omega} \to (0,1)$ is constant. 
Let $u$ be the weak solution of \eqref{meqn} given by Theorem~\ref{thm1}-\textnormal{(a)}, and suppose 
$f \in L^{m}(\Omega)\setminus\{0\}$ is nonnegative for some $m$. Then:
\begin{enumerate}
    \item[(i)] If 
    \[
    m \in \left( \left( \frac{p^{*}}{1-\delta} \right)', \, \frac{Q}{p} \right),
    \]
    then $u \in L^{t}(\Omega)$, where $t = p^{*}\gamma$ and 
    \[
    \gamma = \frac{(\delta + p - 1)\gamma'}{p m' - p^{*}}.
    \]
    \item[(ii)] If $m > \frac{Q}{p}$, then $u \in L^{\infty}(\Omega)$.
\end{enumerate}
\end{Theorem}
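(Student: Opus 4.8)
The plan is to derive De Giorgi--Stampacchia type a priori bounds on the approximating sequence used in the proof of Theorem~\ref{thm1}-(a) and then pass them to the limit. Recall that the solution $u$ there is obtained as the pointwise increasing limit of solutions $u_n\in HW_0^{1,p}(\Omega)$ of the regularized problems $M_\alpha u_n=f_n\,(u_n+\tfrac1n)^{-\delta}$ in $\Omega$, $u_n=0$ in $\mathbb H^N\setminus\Omega$, with $f_n=\min\{f,n\}$; since the right-hand side is bounded by $n^{1+\delta}$, the regularity theory for the mixed operator with bounded data (as in \cite{Zhang}) yields $u_n\in L^\infty(\Omega)$. In particular every test function of the form $u_n^\beta$ ($\beta\ge1$) or $G_k(u_n)=(u_n-k)^+$ is a legitimate element of $HW_0^{1,p}(\Omega)$, because a Lipschitz function vanishing at the origin of a bounded $HW_0^{1,p}$ function again lies in $HW_0^{1,p}(\Omega)$, with the expected control of both the horizontal gradient and the Gagliardo seminorm. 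All constants produced below will be independent of $n$, so the resulting estimates transfer to $u$ by monotone convergence.

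\emph{Proof of (i).} For a parameter $\beta\ge1$ to be chosen, test the $n$-th equation with $\phi=u_n^{\beta}$. The mixed structure only helps here: by the elementary monotonicity inequality $\big(J_p(a)-J_p(b)\big)\big(a^{\beta}-b^{\beta}\big)\ge0$, the nonlocal contribution $\alpha\iint J_p(u_n(x)-u_n(y))(u_n(x)^{\beta}-u_n(y)^{\beta})\,d\mu$ is nonnegative and may simply be discarded. The local term equals $\beta\int_\Omega|\nabla_H u_n|^{p}u_n^{\beta-1}\,dx=\frac{\beta p^{p}}{(\beta+p-1)^{p}}\int_\Omega\big|\nabla_H\big(u_n^{(\beta+p-1)/p}\big)\big|^{p}\,dx$, which by the subelliptic Sobolev inequality $HW_0^{1,p}(\Omega)\hookrightarrow L^{p^{*}}(\Omega)$ is bounded below by $c_\beta\big\|u_n^{(\beta+p-1)/p}\big\|_{L^{p^{*}}(\Omega)}^{p}$. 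On the right-hand side, $f_n(u_n+\tfrac1n)^{-\delta}u_n^{\beta}\le f\,u_n^{\beta-\delta}$, and Hölder's inequality with exponents $m,m'$ gives $\int_\Omega f\,u_n^{\beta-\delta}\,dx\le\|f\|_{L^{m}(\Omega)}\,\|u_n\|_{L^{(\beta-\delta)m'}(\Omega)}^{\beta-\delta}$. Now one chooses $\beta$ so that the two powers of $u_n$ coincide, i.e.\ $\frac{(\beta+p-1)p^{*}}{p}=(\beta-\delta)m'=:t$; a short computation shows that this $\beta$ satisfies $\beta\ge1$ precisely when $m\ge\big(\tfrac{p^{*}}{1-\delta}\big)'$ (with $\beta=1$, $t=p^{*}$ at that endpoint, consistently with Theorem~\ref{thm1}-(a)), that $t$ equals the exponent $p^{*}\gamma$ claimed in the statement, and that $t\to\infty$ as $m\uparrow Q/p$. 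With this choice the inequality reads $c\,\|u_n\|_{L^{t}(\Omega)}^{\,\beta+p-1}\le\|f\|_{L^{m}(\Omega)}\,\|u_n\|_{L^{t}(\Omega)}^{\,\beta-\delta}$, hence $\|u_n\|_{L^{t}(\Omega)}^{\,\delta+p-1}\le C\|f\|_{L^{m}(\Omega)}$ uniformly in $n$, and monotone convergence gives $u\in L^{t}(\Omega)$.

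\emph{Proof of (ii).} Assume $m>Q/p$. For $k\ge1$ test the $n$-th equation with $\phi=G_k(u_n)=(u_n-k)^{+}$. Again the nonlocal term is nonnegative (monotonicity of $G_k$) and is dropped, and on $A_{k,n}:=\{u_n>k\}\subset\{u_n\ge1\}$ one has $(u_n+\tfrac1n)^{-\delta}\le1$, so the right-hand side is at most $\int_{A_{k,n}}f\,G_k(u_n)\,dx$. Combining the Sobolev inequality with two applications of Hölder (first with $m,m'$, then with $p^{*}/m'$ and its conjugate, legitimate since $m>Q/p>(p^{*})'$) yields $\big\|G_k(u_n)\big\|_{L^{p^{*}}(\Omega)}^{p-1}\le C\|f\|_{L^{m}(\Omega)}\,|A_{k,n}|^{\frac1{m'}-\frac1{p^{*}}}$, and therefore, for all $h>k\ge1$, $(h-k)^{p^{*}}|A_{h,n}|\le C\,|A_{k,n}|^{\beta_0}$ with $\beta_0=\frac{p^{*}}{p-1}\big(\frac1{m'}-\frac1{p^{*}}\big)$. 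A direct computation shows $\beta_0>1$ exactly when $m>Q/p$, so by the classical Stampacchia lemma there is a level $k_0$, depending only on $\|f\|_{L^{m}(\Omega)}$, $|\Omega|$, $p$, $Q$, $m$ but not on $n$, with $|A_{k_0,n}|=0$; hence $u_n\le k_0$ a.e., and letting $n\to\infty$ gives $u\in L^{\infty}(\Omega)$ with $\|u\|_{L^{\infty}(\Omega)}\le k_0$.

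The treatment of the nonlocal term is entirely soft --- it always enters with a favourable sign and is simply discarded --- so the argument is in essence the Euclidean local one, transplanted to $\mathbb H^N$ via the Folland--Stein Sobolev inequality. Accordingly the main points to be careful about are bookkeeping ones: in (i), verifying that the single algebraic choice of $\beta$ reproduces exactly the exponent in the statement and stays in the admissible range $\beta\ge1$ over the whole interval $m\in\big(\big(\tfrac{p^{*}}{1-\delta}\big)',Q/p\big)$; and in (ii), checking the threshold $\beta_0>1\Leftrightarrow m>Q/p$ needed for Stampacchia's lemma. The only structural prerequisites are the $L^\infty$ bound on the approximants (from the non-singular regularity theory for $M_\alpha$) and the uniform-in-$n$ character of all constants, which is precisely what permits the passage to the limit.
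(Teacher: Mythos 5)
Your proof is correct and takes essentially the same route as the paper: for (i) you test with $u_n^\beta$, which with $\beta=p\gamma-p+1$ is exactly the paper's test function $u_n^{p\gamma-p+1}$, and the requirement $\tfrac{(\beta+p-1)p^*}{p}=(\beta-\delta)m'$ is precisely the paper's relation $(p\gamma-p+1-\delta)m'=p^*\gamma$, yielding the same $t=p^*\gamma$ with $\gamma=\frac{(\delta+p-1)m'}{pm'-p^*}$ (the theorem statement's ``$\gamma'$'' is a typo for $m'$); for (ii) you run the same Stampacchia iteration with the same threshold $m>Q/p$. The only cosmetic difference is that you discard the nonlocal term by the sign argument, whereas the paper retains it through Lemma~\ref{BPalg} before applying the embedding; both are equally valid since Lemma~\ref{emb} only uses the horizontal gradient part of the norm.
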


\begin{Theorem}\label{regthm2}(\textbf{Constant singular exponent, case $\delta = 1$})
Assume $\delta \equiv 1$ in $\overline{\Omega}$. 
Let $u$ be the weak solution of \eqref{meqn} obtained in Theorem~\ref{thm1}-\textnormal{(b)}, and let 
$f \in L^{m}(\Omega)\setminus\{0\}$ be nonnegative for some $m$. Then:
\begin{enumerate}
    \item[(i)] If 
    \[
    m \in (1, \, Q/p),
    \]
    then $u \in L^{t}(\Omega)$, where $t = p^{*} \gamma$ with 
    \[
    \gamma = \frac{p m'}{p m' - p^{*}}.
    \]
    \item[(ii)] If $m > \frac{Q}{p}$, then $u \in L^{\infty}(\Omega)$.
\end{enumerate}
\end{Theorem}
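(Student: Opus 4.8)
The plan is to return to the approximating scheme behind Theorem~\ref{thm1}-(b). With $f_n:=\min\{f,n\}$, let $u_n\in HW_0^{1,p}(\Omega)$ be the (positive) weak solution of the regularized problem
\[
M_\alpha u_n=\frac{f_n}{u_n+\tfrac1n}\quad\text{in }\Omega,\qquad u_n=0\quad\text{on }\mathbb{H}^{N}\setminus\Omega .
\]
Recall that $0\le u_n\le u_{n+1}$ and $u_n\to u$ a.e.\ in $\Omega$, and that, since $0\le f_n(u_n+\tfrac1n)^{-1}\le n^{2}$, each $u_n$ lies in $L^{\infty}(\Omega)$ by the boundedness theory for $M_\alpha$ with bounded data (the estimate of \cite{Zhang}); this $L^\infty$ bound of course depends on $n$. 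The whole proof consists in upgrading this pointwise-$n$ information to a priori estimates that are \emph{uniform} in $n$, after which both conclusions follow by monotone convergence (Fatou's lemma). What makes the case $\delta\equiv1$ transparent is the elementary comparison $(u_n+\tfrac1n)^{-1}u_n^{\beta}\le u_n^{\beta-1}$ for $\beta\ge1$, together with $(u_n+\tfrac1n)^{-1}\le1$ on the superlevel set $\{u_n>1\}$: this lets us dominate the singular right-hand side by $f$ (times a nonnegative power of $u_n$) precisely on the regions where the test functions below are supported.

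For part (i), I would test the regularized equation with $\phi=u_n^{\beta}$ for
\[
\beta=\frac{pm'+p^{*}(p-1)}{pm'-p^{*}}>1,
\]
an admissible choice because $u_n\in HW_0^{1,p}(\Omega)\cap L^{\infty}(\Omega)$ and $t\mapsto t^{\beta}$ is Lipschitz on bounded intervals; note $pm'-p^{*}>0$ holds exactly because $m<Q/p$. The local term produces a constant (depending on $\beta,p$) times $\|\nabla_H u_n^{(\beta+p-1)/p}\|_{L^{p}(\Omega)}^{p}$, the nonlocal term is nonnegative since $J_p(a-b)(a^{\beta}-b^{\beta})\ge0$ and is discarded, and the right-hand side is bounded by $\int_{\Omega}f\,u_n^{\beta-1}\,dx$. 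Applying the Sobolev inequality of $HW_0^{1,p}(\Omega)$ to $u_n^{(\beta+p-1)/p}$ and Hölder's inequality with exponents $(m,m')$ on the right gives
\[
\|u_n\|_{L^{t}(\Omega)}^{\,\beta+p-1}\le C\,\|f\|_{L^{m}(\Omega)}\,\|u_n\|_{L^{t}(\Omega)}^{\,\beta-1},\qquad t:=(\beta-1)m'=\frac{(\beta+p-1)p^{*}}{p},
\]
the two exponents coinciding by the very choice of $\beta$, and a short computation identifies $t=p^{*}\gamma$ with $\gamma=\frac{pm'}{pm'-p^{*}}$. Since $0<\|u_n\|_{L^{t}(\Omega)}<\infty$ for $n$ large, dividing yields $\|u_n\|_{L^{t}(\Omega)}^{p}\le C\|f\|_{L^{m}(\Omega)}$ uniformly in $n$, so Fatou's lemma gives $u\in L^{t}(\Omega)$.

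For part (ii), I would run a De Giorgi--Stampacchia iteration directly on $\{u_n\}$. Testing with $G_k(u_n):=(u_n-k)^{+}$ for $k\ge1$, discarding the nonnegative nonlocal contribution, and using $f_n\le f$ and $(u_n+\tfrac1n)^{-1}\le1$ on $A_k:=\{u_n>k\}$, one obtains $\int_{\Omega}|\nabla_H G_k(u_n)|^{p}\,dx\le\int_{A_k}f\,G_k(u_n)\,dx$. Combining the Sobolev inequality with Hölder's inequality (exponents $m$, $p^{*}$, and $r$ with $\tfrac1r=1-\tfrac1m-\tfrac1{p^{*}}>0$, the positivity using $p<Q$) leads to $\|G_k(u_n)\|_{L^{p^{*}}(\Omega)}^{\,p-1}\le C\|f\|_{L^{m}(\Omega)}\,|A_k|^{1/r}$, and since $(h-k)\,|A_h|^{1/p^{*}}\le\|G_k(u_n)\|_{L^{p^{*}}(\Omega)}$ for $h>k$ this becomes
\[
|A_h|\le\frac{C}{(h-k)^{p^{*}}}\,|A_k|^{\theta},\qquad \theta:=\frac{p^{*}}{r(p-1)},
\]
with $C=C(\|f\|_{L^{m}(\Omega)},p,Q)$. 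The condition $\theta>1$ is equivalent to $m>Q/p$, so Stampacchia's lemma furnishes a level $k_0$ depending only on $\|f\|_{L^{m}(\Omega)},p,Q,m,|\Omega|$ with $|A_{k_0}|=0$; hence $u_n\le k_0$ a.e.\ and, letting $n\to\infty$, $u\in L^{\infty}(\Omega)$.

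The underlying scheme is classical, and the steps that require genuine care are: the admissibility of $u_n^{\beta}$ and $(u_n-k)^{+}$ in $HW_0^{1,p}(\Omega)$, which rests on the a priori boundedness of each $u_n$ and on controlling both the horizontal gradient and the Gagliardo seminorm of these Lipschitz truncations of $u_n$; and the bookkeeping needed to keep all constants independent of $n$ while checking that the threshold identities ($\beta>1$, $pm'-p^{*}>0$, $\tfrac1r>0$, $\theta>1$) reduce exactly to the stated ranges of $m$. The only point specific to the mixed structure is that the nonlocal term has a favorable sign for each of these monotone test functions, so that it may simply be dropped; in particular no quantitative estimate for the fractional seminorm is needed, and --- modulo the above verifications --- the argument is a direct transcription of the Euclidean proof.
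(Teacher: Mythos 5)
Your proposal is correct and follows essentially the same route as the paper: for (i) you test the regularized equation with $u_n^{\beta}$ where $\beta=\frac{pm'+p^{*}(p-1)}{pm'-p^{*}}$, which is exactly the paper's choice $p\gamma-p+1$ with $\gamma=\frac{pm'}{pm'-p^{*}}$, and the exponent bookkeeping (Sobolev on $u_n^{(\beta+p-1)/p}$, H\"older with $(m,m')$, then dividing by $\|u_n\|_{L^t}^{\beta-1}$) reproduces the paper's computation; for (ii) you run the Stampacchia iteration on $(u_n-k)^{+}$ with $k\ge1$ and the same truncation estimate $(u_n+\tfrac1n)^{-1}\le1$ on $\{u_n>k\}$. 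The only cosmetic differences are that you discard the (correctly signed) nonlocal term outright where the paper retains it via the Brasco--Parini inequality (Lemma~\ref{BPalg}) before applying Sobolev, and that in (ii) you apply a single three-exponent H\"older where the paper chains a two-exponent H\"older, Young, and a second H\"older — both bookkeeping schemes yield the identical iteration exponent $\theta=\frac{p^{*}}{r(p-1)}>1 \iff m>Q/p$.
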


\begin{Theorem}\label{regthm3}(\textbf{Constant singular exponent, case $\delta > 1$})
Let $\delta : \overline{\Omega} \to (1,\infty)$ be constant. 
Assume that $u$ is the weak solution of \eqref{meqn} given by Theorem~\ref{thm1}-\textnormal{(c)}, and that 
$f \in L^{m}(\Omega)\setminus\{0\}$ is nonnegative for some $m$. Then:
\begin{enumerate}
    \item[(i)] If 
    \[
    m \in (1, \, Q/p),
    \]
    then $u \in L^{t}(\Omega)$, where $t = p^{*}\gamma$ and 
    \[
    \gamma = \frac{(\delta + p - 1)m'}{p m' - p^{*}}.
    \]
    \item[(ii)] If $m > \frac{Q}{p}$, then $u \in L^{\infty}(\Omega)$.
\end{enumerate}
\end{Theorem}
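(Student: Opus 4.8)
The plan is the classical approximation scheme of Boccardo--Orsina combined with \emph{uniform} a priori estimates on the approximants, which are then transferred to $u$ by Fatou's lemma. Let $u_n\in HW_0^{1,p}(\Omega)$ be the approximating solutions built in the proof of Theorem~\ref{thm1}-(c): $M_\alpha u_n=f_n\,(u_n+\tfrac1n)^{-\delta}$ with $f_n=\min\{f,n\}$, the sequence being nondecreasing with $u_n\to u$ a.e.\ and $u_n>0$ in $\Omega$. For each fixed $n$ the right-hand side is bounded (by $n^{1+\delta}$), so the local/nonlocal boundedness theory for $M_\alpha$ (as in \cite{Zhang}) gives $u_n\in L^\infty(\Omega)$; only this finiteness, not any uniformity in $n$, will be used from it.

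For part (i) I would use a single weighted energy inequality together with a self-improvement trick. Put
\[
\gamma=\frac{(\delta+p-1)m'}{pm'-p^*},\qquad t=p^*\gamma,\qquad \beta=\delta+\frac{t}{m'},
\]
noting $pm'-p^*>0$ because $m<Q/p$, hence $\beta>\delta>1$, and that with these choices $t=\tfrac{(\beta+p-1)p^*}{p}$ and $(\beta-\delta)m'=t$. Testing the equation for $u_n$ with $u_n^{\beta}\in HW_0^{1,p}(\Omega)\cap L^\infty(\Omega)$, the local part of the left side equals, by the chain rule, a positive constant times $\int_\Omega|\nabla_H(u_n^{(\beta+p-1)/p})|^p\,dx$, while the nonlocal part is at least a positive constant times the Gagliardo $p$-seminorm of $u_n^{(\beta+p-1)/p}$, via the elementary inequality $|a-b|^{p-2}(a-b)(a^{\beta}-b^{\beta})\ge c\,|a^{(\beta+p-1)/p}-b^{(\beta+p-1)/p}|^p$ for $a,b\ge0$. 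By the Heisenberg Sobolev inequality the left side is therefore $\ge C\|u_n\|_{L^t(\Omega)}^{\beta+p-1}$. On the right side, $f_n\le f$ and $(u_n+\tfrac1n)^{-\delta}u_n^{\beta}\le u_n^{\beta-\delta}$ pointwise, and Hölder with exponents $m,m'$ together with $(\beta-\delta)m'=t$ bound it by $\|f\|_{L^m(\Omega)}\|u_n\|_{L^t(\Omega)}^{\beta-\delta}$. Since $\|u_n\|_{L^t(\Omega)}$ is finite and positive, dividing and using $(\beta+p-1)-(\beta-\delta)=\delta+p-1>0$ yields $\|u_n\|_{L^t(\Omega)}^{\delta+p-1}\le C\|f\|_{L^m(\Omega)}$ uniformly in $n$, and $u\in L^t(\Omega)$ follows by Fatou. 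The exponent choices above are precisely those that make the two copies of $\|u_n\|_{L^t}$ coincide.

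For part (ii) I would run a De Giorgi--Stampacchia iteration. For $k\ge1$ test with $(u_n-k)^+\in HW_0^{1,p}(\Omega)$; on $A_k:=\{u_n>k\}\subset\{u_n>1\}$ one has $(u_n+\tfrac1n)^{-\delta}\le 1$, so the right side is at most $\int_{A_k}f\,(u_n-k)\,dx\le\|f\|_{L^m(\Omega)}\|(u_n-k)^+\|_{L^{p^*}(\Omega)}|A_k|^{1-\frac1m-\frac1{p^*}}$ by Hölder with three factors, the last exponent being positive since $m>Q/p$. The left side is $\ge C\|(u_n-k)^+\|_{L^{p^*}(\Omega)}^p$ by the sign of the nonlocal term (via $|a-b|^{p-2}(a-b)((a-k)^+-(b-k)^+)\ge|(a-k)^+-(b-k)^+|^p$) and Sobolev. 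Combining gives $\|(u_n-k)^+\|_{L^{p^*}(\Omega)}^{p-1}\le C\|f\|_{L^m(\Omega)}|A_k|^{1-\frac1m-\frac1{p^*}}$, hence for $h>k\ge1$
\[
(h-k)|A_h|\le\int_{A_k}(u_n-k)\,dx\le|A_k|^{1-\frac1{p^*}}\|(u_n-k)^+\|_{L^{p^*}(\Omega)}\le C\,|A_k|^{1+\sigma},
\]
with $\sigma=\frac{1}{p-1}\big(1-\frac1m-\frac1{p^*}\big)-\frac1{p^*}$; a direct computation shows $\sigma>0\iff m>Q/p$. Stampacchia's lemma then gives $|A_{k_0}|=0$ for some $k_0$ depending only on $|\Omega|$, $\|f\|_{L^m(\Omega)}$ and the structural constants, so $0\le u_n\le k_0$ a.e., and letting $n\to\infty$ gives $u\in L^\infty(\Omega)$.

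I expect the main obstacle to be technical rather than conceptual: carefully verifying the two algebraic inequalities for the nonlocal bilinear form on $\mathbb{H}^N$, checking that $u_n^{\beta}$ and $(u_n-k)^+$ are admissible test functions in the mixed weak formulation, and---for part (i)---justifying that each $u_n$ is bounded so the division step is legitimate. If one prefers to avoid invoking $L^\infty$-regularity of the $u_n$, the conclusion in (i) can instead be reached by iterating the weighted estimate along $q_{j+1}=\frac{p^*}{pm'}q_j+\frac{p^*(\delta+p-1)}{p}$, started from $q_0=\frac{(\delta+p-1)p^*}{p}$ (the integrability already furnished by $u^{(\delta+p-1)/p}\in HW_0^{1,p}(\Omega)$): this recursion is a contraction with fixed point $t=p^*\gamma$, the constants stay bounded, and monotone convergence in the integrability exponent closes the argument.
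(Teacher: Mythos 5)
Your proposal and the paper's proof are essentially the same. For part (i), your choice $\beta=\delta+t/m'$ algebraically coincides with the paper's test exponent $p\gamma-p+1$ (a short computation from $\gamma(pm'-p^*)=(\delta+p-1)m'$ shows $t/m'=p\gamma-p+1-\delta$), so you are testing with the same power $u_n^{p\gamma-p+1}$; the paper then lowers the bilinear form by Lemma~\ref{BPalg} (Brasco--Parini) exactly as you invoke the elementary inequality $J_p(a-b)(a^{\beta}-b^{\beta})\gtrsim|a^{(\beta+p-1)/p}-b^{(\beta+p-1)/p}|^p$, and closes via Sobolev and $p/p^*-1/m'>0$, which is the same balance of exponents that lets you divide by $\|u_n\|_{L^t}^{\beta-\delta}$. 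The only cosmetic difference is that the paper treats the finiteness needed for the division step implicitly (it suffices that $u_n\in L^\infty$ from Lemma~\ref{approx}), whereas you make it explicit and also sketch a bootstrap avoiding it; both are fine. For part (ii), the paper invokes the same Stampacchia iteration as in Theorem~\ref{regthm}-(ii): test with $(u_n-k)^+$, drop the singularity on $\{u_n\ge k\ge1\}$, apply H\"older (the paper first with $(p^*)'$ and $p^*$, then $m/(p^*)'$; you do it in one step with three factors, which is equivalent), and conclude with \cite[Lemma B.1]{Stam}; your condition $\sigma>0\iff m>Q/p$ matches the paper's $\alpha>1$.
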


\textbf{Uniqueness:} Our final result is the following uniqueness theorem.
\begin{Theorem}\label{thm4}(\textbf{Uniqueness})
Let $\delta : \overline{\Omega} \to (0,\infty)$ be a constant function, and suppose that 
$f \in L^{1}(\Omega)\setminus\{0\}$ is nonnegative. Then the problem \eqref{meqn} admits at most one weak solution in 
\[
HW^{1,p}_{\mathrm{loc}}(\Omega) \cap L^{p-1}(\Omega).
\]
\end{Theorem}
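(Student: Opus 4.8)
The plan is to argue by contradiction: suppose $u_1, u_2 \in HW^{1,p}_{\mathrm{loc}}(\Omega) \cap L^{p-1}(\Omega)$ are two weak solutions of \eqref{meqn} with the same datum $f \geq 0$ and the same constant exponent $\delta > 0$. Both are strictly positive and locally bounded below on compact subsets (by the weak Harnack inequality invoked for the existence theory), and both have the same boundary behaviour encoded through $u_i^{(\delta+p-1)/p} \in HW^{1,p}_0(\Omega)$ (in the case $\delta \geq 1$; for $0<\delta<1$ the solutions themselves lie in $HW^{1,p}_0(\Omega)$). The key device, in the spirit of \cite{Caninouni}, is to test the weak formulation for $u_1$ and for $u_2$ against the \emph{same} test function built from the difference, exploiting monotonicity of both the local operator $-\Delta_{H,p}$ and the fractional operator $(-\Delta_{H,p})^s$.

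First I would set up the admissible test function. For $\theta > 0$ a truncation/regularization parameter, consider
\[
\varphi_\theta = \frac{(u_1^{\delta+p-1} - u_2^{\delta+p-1})^+}{(u_1 + \theta)^{\delta+p-2}} \quad \text{or its symmetric counterpart,}
\]
or more simply the classical choice $\varphi = (u_1 - u_2)^+$ suitably cut off so as to be compactly supported and to lie in $HW^{1,p}_0(\Omega)$. The crucial monotonicity observation is that $t \mapsto t^{-\delta}$ is strictly decreasing, so on the set $\{u_1 > u_2\}$ one has $f\,u_1^{-\delta} \leq f\,u_2^{-\delta}$; hence subtracting the two weak formulations tested against a nonnegative function supported on $\{u_1>u_2\}$ gives
\[
\langle -\Delta_{H,p}u_1 + \Delta_{H,p}u_2,\varphi\rangle
+ \alpha\,\langle (-\Delta_{H,p})^s u_1 - (-\Delta_{H,p})^s u_2,\varphi\rangle
= \int_\Omega f\,(u_1^{-\delta} - u_2^{-\delta})\,\varphi \leq 0.
\]
On the other hand, by the standard monotonicity inequalities for $J_p$ — namely $(J_p(a)-J_p(b))\cdot(a-b)\geq 0$ with equality only if $a=b$, applied with $a=\nabla_H u_1$, $b=\nabla_H u_2$ for the local part, and with the analogous pointwise inequality on $\mathbb{H}^N\times\mathbb{H}^N$ under $d\mu$ for the nonlocal part — the left-hand side is nonnegative. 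Combining, both the local Dirichlet-type term and the nonlocal term vanish, forcing $\nabla_H(u_1-u_2)^+ = 0$ a.e. and $(u_1-u_2)^+(x) = (u_1-u_2)^+(y)$ for $\mu$-a.e.\ pair; since $(u_1-u_2)^+ \in HW^{1,p}_0(\Omega)$ (after removing the cutoff by letting it exhaust $\Omega$, using $u_i^{(\delta+p-1)/p}\in HW^{1,p}_0(\Omega)$ to control boundary contributions and $u_i\in L^{p-1}(\Omega)$ for integrability), this yields $(u_1-u_2)^+ \equiv 0$, i.e.\ $u_1 \leq u_2$. Exchanging the roles of $u_1$ and $u_2$ gives the reverse inequality and hence $u_1 = u_2$.

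The main obstacle I anticipate is \emph{admissibility of the test function}: since $u_1, u_2$ are only in $HW^{1,p}_{\mathrm{loc}}(\Omega)$ and may degenerate near $\partial\Omega$, the difference $(u_1-u_2)^+$ need not a priori belong to $HW^{1,p}_0(\Omega)$, and the nonlocal tail term $\int\!\int \frac{|u_i(x)-u_i(y)|^{p-2}(u_i(x)-u_i(y))}{|y^{-1}\circ x|^{Q+sp}}\,(\varphi(x)-\varphi(y))\,dx\,dy$ must be shown to be finite and to behave correctly when $\varphi$ is supported up to the boundary. I would handle this by working first with $\varphi$ multiplied by a cutoff $\psi_k$ that is $1$ on $\omega_k \Subset \Omega$ with $\omega_k \uparrow \Omega$, proving the inequality on each $\omega_k$, and then passing to the limit $k\to\infty$; the boundary terms are absorbed using the hypothesis $u_i^{(\delta+p-1)/p}\in HW^{1,p}_0(\Omega)$ together with $u_i \in L^{p-1}(\Omega)$, and for the nonlocal part one exploits that $(-\Delta_{H,p})^s$ acts on the globally defined functions $u_i$ (which vanish outside $\Omega$), so the tail is integrable by the definition of the weak solution. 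A secondary technical point is the monotonicity estimate for the nonlocal $p$-Laplacian, which requires the algebraic inequality $\big(J_p(a)-J_p(b)\big)\big((a-c)-(b-d)\big)\geq 0$ when $(a-b)$ and $(c-d)$ have compatible signs along $\{u_1>u_2\}$; this is standard but needs care in the regime $1<p<2$ versus $p\geq 2$, and I would cite the corresponding convexity/monotonicity lemmas already used in the existence proof.
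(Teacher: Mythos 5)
Your proposal captures the right heuristic (strict monotonicity of $t\mapsto t^{-\delta}$, plus monotonicity of both the local and nonlocal operators), but there is a genuine gap in the admissibility argument for the test function, and the paper's proof is organized quite differently precisely to avoid it.

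The central difficulty is that Theorem~\ref{thm4} asserts uniqueness among \emph{all} weak solutions in $HW^{1,p}_{\loc}(\Omega)\cap L^{p-1}(\Omega)$. For such a solution the only available boundary information is the one in Definition~\ref{rediri}, namely $(u-\theta)^+\in HW^{1,p}_0(\Omega)$ for every $\theta>0$. You cannot invoke $u_i^{(\delta+p-1)/p}\in HW^{1,p}_0(\Omega)$; that membership is established only for the \emph{constructed} solution in Theorem~\ref{varthm1}/\ref{thm1}, not for an arbitrary weak solution. With only the weak boundary condition, $(u_1-u_2)^+$ is not known to lie in $HW^{1,p}_0(\Omega)$, and the cutoff--and--pass--to--the--limit scheme you sketch produces error terms involving $\nabla_H\psi_k$ (and, for the nonlocal part, cross terms between $\omega_k$ and its complement) that you have no tools to control near $\partial\Omega$ when the solution is only locally $HW^{1,p}$.

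The paper, following \cite{Caninouni,Caninoetal}, takes a more indirect route that sidesteps this. Given a supersolution $v$, it introduces the regularized singular nonlinearity $\Phi_k'$, minimizes the corresponding functional $J_k$ over the convex set $\mathcal K=\{\varphi\in HW^{1,p}_0(\Omega):0\le\varphi\le v\}$ to obtain a function $w\in HW^{1,p}_0(\Omega)$, and proves in Lemma~\ref{lemmause} that $w$ is a supersolution of the truncated problem. Then, in Theorem~\ref{comparison}, the subsolution $u$ is compared with $w$ (not with $v$ directly) using the test function $T_\tau\big((u-w-\varepsilon)^+\big)$. The $-\varepsilon$ shift is the device that makes this admissible: since $u\le 0$ on $\partial\Omega$ in the sense of Definition~\ref{rediri} and $w\ge 0$, one has $(u-w-\varepsilon)^+\in HW^{1,p}_0(\Omega)$, and the truncation $T_\tau$ keeps it bounded so that the nonlocal pairing is finite. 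The singular term is handled by choosing $\varepsilon$ so that $\varepsilon^{-\delta}<k$, ensuring the sign $\Phi_k'(u)-\Phi_k'(w)\le 0$ on the relevant set. This is quite different from a direct cutoff argument: the regularity near the boundary is supplied by the auxiliary minimizer $w$, not by the solutions themselves. If you want to repair your sketch, the key idea to import is this shift-and-truncate test function $T_\tau\big((u_1-w-\varepsilon)^+\big)$ built from an auxiliary $HW^{1,p}_0$ object, rather than $(u_1-u_2)^+$ with a cutoff.
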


\begin{Remark}\label{rmk}
We note that by proceeding along the lies of the proof, all the main results presented above remain valid in the purely local setting, corresponding to $\alpha = 0$. To the best of our knowledge, these conclusions are also new in the context of variable singular exponents on the Heisenberg group.
\end{Remark}

\textbf{Organization of the paper:} The remainder of this article is structured as follows:
In Section 2, we introduce the functional framework and recall some auxiliary results that will be used throughout the paper.
In Section 3, we establish some preliminary results required to prove our main results.
Finally, in Section 4, we complete the proof of our main results.

\section{Functional setting and auxiliary results}
In this section, we briefly recall some basic facts about the Heisenberg group $\mathbb{H}^N$, introduce several function spaces.

The Euclidean space $\mathbb{R}^{2N+1}$, endowed with the group multiplication
\[
\xi \circ \eta =
\left(
x_1+y_1,\,
x_2+y_2,\,
\ldots,\,
x_{2N}+y_{2N},\,
\tau + \tau' + \frac{1}{2}\sum_{i=1}^{N}\big(x_i y_{N+i} - x_{N+i} y_i\big)
\right),
\]
where $\xi=(x_1,\ldots,x_{2N},\tau)$ and $\eta=(y_1,\ldots,y_{2N},\tau')\in\mathbb{R}^{2N+1}$, defines the Heisenberg group $\mathbb{H}^N$.

The left-invariant vector fields on $\mathbb{H}^N$ are given by
\[
X_i = \partial_{x_i} - \frac{x_{N+i}}{2}\,\partial_\tau,
\qquad
X_{N+i} = \partial_{x_{N+i}} + \frac{x_i}{2}\,\partial_\tau,
\qquad 1\le i\le N,
\]
and the nontrivial commutator is
\[
T = \partial_\tau = [X_i,\, X_{N+i}]
= X_i X_{N+i} - X_{N+i} X_i,
\qquad 1\le i\le N.
\]
We refer to $X_1, X_2, \ldots, X_{2N}$ as the horizontal vector fields on $\mathbb{H}^N$, and $T$ as the vertical vector field.

The Haar measure on $\mathbb{H}^N$ is equivalent to the Lebesgue measure on $\mathbb{R}^{2N+1}$.  
For a measurable set $E\subset \mathbb{H}^N$, we denote its Lebesgue measure by $|E|$.

For $\xi=(x_1,\ldots,x_{2N},\tau)$, we define its Kor\'anyi-type norm by  
\[
|\xi|
= \left( \left(\sum_{i=1}^{2N} x_i^2 \right)^{2} + \tau^{2} \right)^{1/4}.
\]
The Carnot-Carath\'eodory distance between two points $\xi,\eta\in\mathbb{H}^N$ is defined as the infimum of the lengths of horizontal curves joining them, and is denoted by $d(\xi,\eta)$.  
This distance is equivalent to the Kor\'anyi metric, i.e.,
\[
d(\xi,\eta)\sim |\xi^{-1}\circ\eta|.
\]

The ball of radius $r>0$ centered at $\xi_0$ with respect to $d$ is given by  
\[
B_r(\xi_0)=\{\xi\in\mathbb{H}^N:\ d(\xi,\xi_0)<r\}.
\]
When the center is irrelevant or understood from the context, we write simply $B_r := B_r(\xi_0)$.

The homogeneous dimension of $\mathbb{H}^N$ is $Q = 2N + 2$. Let $1 \le p < \infty$ and $\Omega \subset \mathbb{H}^N$.  
The Sobolev space $HW^{1,p}(\Omega)$ is defined by
\[
HW^{1,p}(\Omega)
=
\left\{
u \in L^{p}(\Omega) :
\nabla_H u \in L^{p}(\Omega)
\right\},
\]
where $\na_H u$ is the horizontal gradient of $u$ defined by  
\[
\na_H u = (X_1u, X_2u, \ldots, X_{2N}u).
\]
and is endowed with the norm
\[
\|u\|_{HW^{1,p}(\Omega)}
=
\|u\|_{L^{p}(\Omega)}
+
\left\|\nabla_H u\right\|_{L^{p}(\Omega)}.
\]
With this norm, $HW^{1,p}(\Omega)$ is a Banach space.

The local Sobolev space $HW_{\mathrm{loc}}^{1,p}(\Omega)$ is defined as
\[
HW_{\mathrm{loc}}^{1,p}(\Omega)
=
\left\{
u : u \in HW^{1,p}(\Omega') \ \text{for every open set } \Omega' \Subset \Omega
\right\}.
\]

Let $1 \le p < \infty$, $s \in (0,1)$, and let $v:\mathbb{H}^N \to \mathbb{R}$ be a measurable function.  
The Gagliardo seminorm of $v$ is defined by
\[
[v]_{HW^{s,p}(\mathbb{H}^n)}
=
\left(
\int_{\mathbb{H}^N}\int_{\mathbb{H}^N}
{|v(x)-v(y)|^{p}}
\,d\mu
\right)^{1/p}.
\]

The fractional Sobolev space on the Heisenberg group is then defined as
\[
HW^{s,p}(\mathbb{H}^N)
=
\left\{
v \in L^{p}(\mathbb{H}^N) : [v]_{HW^{s,p}(\mathbb{H}^N)} < \infty
\right\}.
\]
The space $HW^{s,p}(\mathbb{H}^N)$ is endowed with the natural fractional norm
\[
\|v\|_{HW^{s,p}(\mathbb{H}^N)}
=
\left( 
\|v\|_{L^{p}(\mathbb{H}^N)}^{p}
+
[v]_{HW^{s,p}(\mathbb{H}^N)}^{p}
\right)^{1/p}.
\]

For any open set $\Omega \subset \mathbb{H}^N$, the fractional Sobolev space $HW^{s,p}(\Omega)$ and its associated norm  
$\|v\|_{HW^{s,p}(\Omega)}$ are defined in an analogous manner.

To deal with the mixed problem \eqref{meqn} (that is when $\alpha>0$ in \eqref{meqn}), we consider the space $HW_{0}^{1,p}(\Omega)$ defined by
$$
HW_0^{1,p}(\Om)=\{u\in HW^{1,p}(\mathbb{H}^N):u=0\text{ in }\mathbb{H}^N\setminus\Om\}.
$$

The next result follows similarly as in \cite[Proposition 2.2]{Hitchhiker'sguide}.
\begin{Lemma}\label{l2.2}
Let $\Omega \subset \mathbb{H}^{N}$ be a smooth bounded domain, and assume that  
$1 < p < \infty$ and $0 < s < 1$. Then there exists a constant $C = C(Q,p,s) > 0$ such that
\[
\|u\|_{HW^{s,p}(\Omega)} \le C\, \|u\|_{HW^{1,p}(\Omega)}
\qquad \text{for all } u \in HW^{1,p}(\Omega).
\]
\end{Lemma}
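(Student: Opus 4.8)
\textbf{Proof plan for Lemma~\ref{l2.2}.}
The plan is to follow the classical embedding argument from \cite[Proposition 2.2]{Hitchhiker'sguide}, transplanted to the Heisenberg group by replacing the Euclidean gradient with the horizontal gradient $\na_H$ and the Euclidean distance with the Kor\'anyi-type distance $|y^{-1}\circ x|$, using the equivalence $d(\xi,\eta)\sim|\xi^{-1}\circ\eta|$ and the fact that the Haar measure coincides with Lebesgue measure on $\R^{2N+1}$. Since $\|u\|_{HW^{s,p}(\Omega)}^p = \|u\|_{L^p(\Omega)}^p + [u]_{HW^{s,p}(\Omega)}^p$ and the $L^p$ term is already dominated by $\|u\|_{HW^{1,p}(\Omega)}^p$, the whole matter reduces to bounding the Gagliardo seminorm
\[
[u]_{HW^{s,p}(\Omega)}^p = \int_\Omega\int_\Omega \frac{|u(x)-u(y)|^p}{|y^{-1}\circ x|^{Q+sp}}\,dx\,dy
\]
by $C\big(\|u\|_{L^p(\Omega)}^p + \|\na_H u\|_{L^p(\Omega)}^p\big)$.

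First I would split the double integral over the region $\{|y^{-1}\circ x|\ge R\}$ and the region $\{|y^{-1}\circ x|<R\}$ for a fixed $R>0$ (chosen, say, comparable to the diameter of $\Omega$). On the far region the kernel is bounded: $|y^{-1}\circ x|^{-(Q+sp)}\le R^{-(Q+sp)}$, so using $|u(x)-u(y)|^p\le 2^{p-1}(|u(x)|^p+|u(y)|^p)$ and Fubini, that piece is controlled by $C(Q,p,s,R)\,|\Omega|\,\|u\|_{L^p(\Omega)}^p$. For the near region, the key step is a local estimate: for $x,y$ with $|y^{-1}\circ x|$ small one writes $u(x)-u(y)$ as an integral of the horizontal gradient along a horizontal curve (a sub-Riemannian geodesic, or a concatenation of horizontal segments) joining $y$ to $x$, whose length is comparable to $d(x,y)\sim|y^{-1}\circ x|$. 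Applying Jensen's inequality to this curve integral, then integrating in the remaining variables and using the left-invariance of the vector fields together with the change of variables $z = y^{-1}\circ x$ (which has unit Jacobian), the singular factor becomes $\int_{|z|<R}|z|^{-(Q+sp)}\cdot|z|^p\,dz = \int_{|z|<R}|z|^{-(Q-(1-s)p)}\,dz$, which is finite precisely because $Q-(1-s)p < Q$, i.e.\ because $s<1$. This yields the bound $C(Q,p,s)\,R^{(1-s)p}\,\|\na_H u\|_{L^p(\Omega)}^p$ for the near piece. Adding the two contributions gives the claim.

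The main obstacle is the near-region estimate: on a general smooth bounded domain $\Omega$ the horizontal curve realizing (up to a constant) the Carnot--Carath\'eodory distance between two nearby points of $\Omega$ need not stay inside $\Omega$, so one cannot directly integrate $\na_H u$ over it. The standard fix — and the one I would adopt — is to first prove the estimate on metric balls $B_r\Subset\Omega$ (where convexity-type properties of the sub-Riemannian geometry make the curve argument clean, and left-invariance makes the change of variables transparent), and then use the smoothness of $\partial\Omega$ to cover $\Omega$ by finitely many such balls, or to invoke an $HW^{1,p}$-extension operator $E:HW^{1,p}(\Omega)\to HW^{1,p}(\mathbb{H}^N)$ for smooth bounded domains, carry out the seminorm estimate for $Eu$ on all of $\mathbb{H}^N$, and restrict back to $\Omega$. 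Either route keeps the constant depending only on $Q,p,s$ (the dependence on $\Omega$ being absorbed once $R$ is fixed in terms of $\operatorname{diam}\Omega$, or hidden in the norm of the extension operator, which is harmless for the stated qualitative inequality). Beyond this geometric point, every step is a routine adaptation of the Euclidean proof.
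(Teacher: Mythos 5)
Your proposal is correct and follows essentially the same route as the paper, which simply refers to \cite[Proposition 2.2]{Hitchhiker'sguide}: split the Gagliardo double integral into far and near regions, bound the far region by $\|u\|_{L^p}^p$, and for the near region substitute $z=y^{-1}\circ x$ (unit Jacobian, since $\mathbb{H}^N$ is unimodular), represent $u(y\circ z)-u(y)$ as an integral of $\nabla_H u$ along a horizontal curve of length $\sim|z|$, apply Jensen, and integrate $|z|^{p-Q-sp}$ over $\{|z|<R\}$, which converges precisely because $s<1$; you also correctly flag that an $HW^{1,p}$-extension operator (or a covering by balls) is needed so the curves do not leave $\Omega$.
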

The next result follows from \cite[Theorem 2.4]{DC}.
\begin{Lemma}\label{l2.3}
Let $\Omega$ be a bounded domain in $\mathbb{H}^N$, and assume $1 < p < \infty$ and $0 < s < 1$.  
Then there exists a constant $C = C(Q,p,s,\Omega) > 0$ such that
\[
\iint_{\mathbb{H}^N \times \mathbb{H}^N}
{|u(x)-u(y)|^{p}}\,d\mu
\le
C \int_{\Omega} |\nabla_H\,u(x)|^{p}\, dx,
\]
for every $u \in HW^{1,p}_0(\Omega)$.
\end{Lemma}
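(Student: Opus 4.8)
The left-hand side is precisely the Gagliardo seminorm $[u]_{HW^{s,p}(\mathbb{H}^N)}^p$, and since $u\in HW^{1,p}_0(\Omega)$ we may view $u$ as an element of $HW^{1,p}(\mathbb{H}^N)$ vanishing a.e.\ on $\mathbb{H}^N\setminus\Omega$; in particular $\|u\|_{L^p(\mathbb{H}^N)}=\|u\|_{L^p(\Omega)}$ and $\nabla_H u=0$ a.e.\ outside $\Omega$. The plan is to split the double integral over $\mathbb{H}^N\times\mathbb{H}^N$ according to whether the gauge ``distance'' $|y^{-1}\circ x|$ is $\ge 1$ or $<1$, to bound each piece by $\|u\|_{HW^{1,p}(\Omega)}^p$, and finally to absorb the $L^p$-term via the Poincar\'e inequality for $HW^{1,p}_0(\Omega)$ on the bounded set $\Omega$ (which itself follows from the Folland--Stein--Sobolev inequality $\|v\|_{L^{p^*}(\mathbb{H}^N)}\le C(Q,p)\|\nabla_H v\|_{L^p(\mathbb{H}^N)}$ together with H\"older's inequality, since $|\Omega|<\infty$).

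For the far part, where $|y^{-1}\circ x|\ge 1$, I would use $|u(x)-u(y)|^p\le 2^{p-1}\big(|u(x)|^p+|u(y)|^p\big)$ together with the symmetry of the kernel --- which holds because the Kor\'anyi norm satisfies $|\xi^{-1}|=|\xi|$, hence $|y^{-1}\circ x|=|x^{-1}\circ y|$ --- to reduce this piece to a constant multiple of $\int_{\mathbb{H}^N}|u(x)|^p\Big(\int_{\{|y^{-1}\circ x|\ge 1\}}|y^{-1}\circ x|^{-(Q+sp)}\,dy\Big)dx$. Because $\mathbb{H}^N$ is unimodular and inversion preserves Haar measure, the map $y\mapsto y^{-1}\circ x$ is measure-preserving, so the inner integral equals $\int_{\{|z|\ge 1\}}|z|^{-(Q+sp)}\,dz$, which by the polar-coordinate formula associated with the Kor\'anyi gauge is a finite constant $\sigma_Q/(sp)$ (the integral converges since $Q+sp>Q$). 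This bounds the far part by $C(Q,s,p)\|u\|_{L^p(\Omega)}^p$.

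For the near part, where $|y^{-1}\circ x|<1$, I would fix a smooth bounded domain $\widetilde\Omega\subset\mathbb{H}^N$ (for instance a large Euclidean ball) with $\overline\Omega\subset\widetilde\Omega$ and with a wide enough collar that $|y^{-1}\circ x|<1$ together with $x\in\overline\Omega$ (respectively $y\in\overline\Omega$) forces $y\in\widetilde\Omega$ (respectively $x\in\widetilde\Omega$); this is possible since the Kor\'anyi gauge is comparable to the Carnot--Carath\'eodory distance, which induces the Euclidean topology. On $\{|y^{-1}\circ x|<1\}$ the integrand vanishes unless $u(x)\ne u(y)$, i.e.\ unless $x$ or $y$ lies in $\Omega$, in which case both points lie in $\widetilde\Omega$; hence the near part is at most $\iint_{\widetilde\Omega\times\widetilde\Omega}|u(x)-u(y)|^p\,d\mu=[u]_{HW^{s,p}(\widetilde\Omega)}^p$. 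Applying Lemma~\ref{l2.2} to $u|_{\widetilde\Omega}\in HW^{1,p}(\widetilde\Omega)$ gives $[u]_{HW^{s,p}(\widetilde\Omega)}^p\le C\|u\|_{HW^{1,p}(\widetilde\Omega)}^p$, and since $u\equiv 0$ on $\widetilde\Omega\setminus\Omega$ the right-hand side equals $C\|u\|_{HW^{1,p}(\Omega)}^p$.

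Combining the two estimates yields $\iint_{\mathbb{H}^N\times\mathbb{H}^N}|u(x)-u(y)|^p\,d\mu\le C\big(\|u\|_{L^p(\Omega)}^p+\|\nabla_H u\|_{L^p(\Omega)}^p\big)$ with $C$ depending only on $Q,p,s$ and $\widetilde\Omega$ (hence on $\Omega$), and then the Poincar\'e inequality absorbs the $L^p$-term to give the asserted bound. I expect the only delicate points to be the justification that $y\mapsto y^{-1}\circ x$ is measure-preserving and that the gauge-sphere polar integration is legitimate on $\mathbb{H}^N$, together with the choice of $\widetilde\Omega$ ensuring that the near part is genuinely dominated by the intrinsic Gagliardo seminorm of a fixed smooth domain to which Lemma~\ref{l2.2} applies; all of these are handled by standard facts about Carnot groups, and I do not anticipate any deeper obstacle.
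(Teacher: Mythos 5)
Your argument is correct, but it is not the paper's argument: the paper offers \emph{no} proof of Lemma~\ref{l2.3} at all, simply deferring to \cite[Theorem~2.4]{DC}. What you have written is a self-contained proof along the classical lines of the Euclidean result behind Lemma~\ref{l2.2}: a near/far decomposition of the double integral at gauge distance~$1$, with the far part controlled by unimodularity (so that $y\mapsto y^{-1}\circ x$ preserves Haar measure) and the integrability of $|z|^{-(Q+sp)}$ at infinity, and the near part swallowed by the Gagliardo seminorm $[u]_{HW^{s,p}(\widetilde\Omega)}$ of a fixed enlarged smooth domain $\widetilde\Omega\supset\overline\Omega$ and then estimated via Lemma~\ref{l2.2}; the residual $\|u\|_{L^p(\Omega)}^p$ is absorbed by the subelliptic Poincar\'e inequality for $HW^{1,p}_0(\Omega)$. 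This is a perfectly legitimate route, and arguably more informative than the bare citation in the paper, because each step is elementary and localizable. Two small points worth tightening if you were to write this up: (i) your justification of the Poincar\'e inequality via the Folland--Stein embedding plus H\"older requires $p<Q$ (so that $p^*<\infty$ and $p^*>p$), whereas the lemma is stated for all $1<p<\infty$; for $p\ge Q$ one should instead apply Folland--Stein with some exponent $q<Q$ chosen so that $q^*\ge p$ and then use H\"older on the bounded domain twice, which is routine; and (ii) you should state explicitly that $\widetilde\Omega$ is fixed in terms of $\Omega$ alone, so that the constant of Lemma~\ref{l2.2} applied on $\widetilde\Omega$ depends only on $Q,p,s$ and $\Omega$, as required.
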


For the following embedding result, we refer to \cite[Theorem 8.1]{Koskela}.
\begin{Lemma}\label{emb}
Let $0<s<1$ and $1<p<Q$. Then the embedding
$$
HW^{1,p}(\Om)\hookrightarrow L^r(\Om)
$$
is continuous for every $r\in[1,p^*]$ and compact for every $r\in[1,p^*)$.
\end{Lemma}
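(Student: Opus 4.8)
I plan to reduce the statement to the Folland--Stein--Sobolev inequality on the whole group, localized to $\Om$ by an extension operator, together with a Riesz--Fr\'echet--Kolmogorov compactness argument adapted to the sub-Riemannian structure, followed by an interpolation. \emph{Step 1 (Sobolev inequality on $\mathbb{H}^N$).} I would first record the Folland--Stein inequality: there is $C=C(Q,p)$ with $\|v\|_{L^{p^*}(\mathbb{H}^N)}\le C\,\|\nabla_H v\|_{L^p(\mathbb{H}^N)}$ for all $v\in C_c^\infty(\mathbb{H}^N)$. For $p=1$ this follows from the representation $v=\Gamma * (\operatorname{div}_H\nabla_H v)$, where $\Gamma$ is the fundamental solution of $\Delta_H$ (homogeneous of degree $2-Q$), combined with the fractional-integration estimate on the space of homogeneous type $(\mathbb{H}^N,d,dx)$; for $1<p<Q$ one applies the $p=1$ case to $|v|^{\gamma}$ with $\gamma=p(Q-1)/(Q-p)$ and H\"older's inequality. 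By group mollification this passes to all $v$ lying in the closure of $C_c^\infty$ under $\|\nabla_H\cdot\|_{L^p}$.

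\emph{Step 2 (localization and continuity).} Since $\Om$ is a bounded smooth domain it is an $(\varepsilon,\delta)$-domain for the Carnot--Carath\'eodory metric, hence there is a bounded linear extension operator $E\colon HW^{1,p}(\Om)\to HW^{1,p}(\mathbb{H}^N)$ with $Eu=u$ a.e.\ on $\Om$; multiplying by a fixed cutoff equal to $1$ on $\Om$ we may assume $Eu$ is supported in a fixed bounded set $\widetilde\Om\Supset\Om$. Step 1 then gives
\[
\|u\|_{L^{p^*}(\Om)}\le\|Eu\|_{L^{p^*}(\mathbb{H}^N)}\le C\,\|\nabla_H(Eu)\|_{L^p(\mathbb{H}^N)}\le C\,\|Eu\|_{HW^{1,p}(\mathbb{H}^N)}\le C\,\|u\|_{HW^{1,p}(\Om)}.
\]
As $|\Om|<\infty$, H\"older's inequality yields $L^{p^*}(\Om)\hookrightarrow L^r(\Om)$ continuously for every $r\in[1,p^*]$, and composing with the previous display proves the continuity of $HW^{1,p}(\Om)\hookrightarrow L^r(\Om)$ for all $r\in[1,p^*]$.

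\emph{Step 3 (compactness).} Given $(u_k)$ bounded in $HW^{1,p}(\Om)$, set $v_k:=Eu_k$, bounded in $HW^{1,p}(\mathbb{H}^N)$ and supported in $\widetilde\Om$. I would apply a Riesz--Fr\'echet--Kolmogorov criterion on $\mathbb{H}^N$: tightness is immediate from the common bounded support, while equi-continuity of translates follows from an estimate of the form $\|v(\cdot\circ h)-v\|_{L^p(\mathbb{H}^N)}\le C\,\rho(h)\,\|\nabla_H v\|_{L^p(\mathbb{H}^N)}$ with $\rho(h)\to 0$ as $h\to 0$, obtained by integrating $\nabla_H v$ along a short horizontal curve joining $x$ to $x\circ h$ and applying Minkowski's integral inequality and left-invariance of Haar measure. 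Hence a subsequence of $(v_k)$ converges in $L^p(\mathbb{H}^N)$, so the corresponding subsequence of $(u_k)$ converges in $L^p(\Om)$, and also in every $L^r(\Om)$ with $r\in[1,p]$. For $r\in(p,p^*)$ choose $\theta\in(0,1)$ with $\frac1r=\frac\theta p+\frac{1-\theta}{p^*}$ and interpolate: $\|u_j-u_k\|_{L^r(\Om)}\le\|u_j-u_k\|_{L^p(\Om)}^{\theta}\,\|u_j-u_k\|_{L^{p^*}(\Om)}^{1-\theta}$, where along the subsequence the first factor tends to $0$ and the second stays bounded by Step 2; so the subsequence is Cauchy in $L^r(\Om)$. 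This gives compactness of $HW^{1,p}(\Om)\hookrightarrow L^r(\Om)$ for all $r\in[1,p^*)$.

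I expect the main obstacle to be the translation-continuity estimate in Step 3: the horizontal gradient controls differences only along horizontal directions, so one must exploit Chow--Rashevskii connectivity by short horizontal curves and the finiteness and homogeneity of the CC metric, and in particular vertical displacements contribute to $\rho(h)$ only at a H\"older (not Lipschitz) rate, yet one still needs $\rho(h)\to 0$. Alternatively, these estimates can be bypassed entirely: $(\mathbb{H}^N,d,dx)$ is Ahlfors $Q$-regular and supports a $(1,p)$-Poincar\'e inequality, $|\nabla_H u|$ is (a constant multiple of) an upper gradient of $u$, and a bounded smooth domain is a John domain, so the Sobolev and Rellich--Kondrachov theorems of \cite[Theorem~8.1]{Koskela} apply directly, the only delicate point then being the identification of $HW^{1,p}(\Om)$ with the relevant metric Sobolev space.
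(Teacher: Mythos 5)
Your proposal is correct in outline, but it takes a genuinely different and much heavier route than the paper, which offers no proof at all: Lemma \ref{emb} is simply quoted from \cite[Theorem 8.1]{Koskela}, i.e.\ precisely the metric-measure-space argument ($Q$-Ahlfors regularity, $(1,p)$-Poincar\'e inequality, $|\nabla_H u|$ as an upper gradient, John domain) that you relegate to your final ``alternatively'' remark. So the comparison is: the paper buys the whole statement from the abstract Sobolev--Poincar\'e machinery on spaces of homogeneous type, at the cost of having to identify $HW^{1,p}(\Omega)$ with the relevant metric Sobolev space and to verify the geometric hypotheses on $\Omega$; your main route (Folland--Stein on $\mathbb{H}^N$, extension, Riesz--Fr\'echet--Kolmogorov, interpolation) is self-contained and closer to the classical Euclidean proof, and in particular makes the compactness mechanism explicit via the translation estimate $\|v(\cdot\circ h)-v\|_{L^p}\le C\,d_{CC}(0,h)\,\|\nabla_H v\|_{L^p}$, which indeed follows from integrating along a short horizontal curve and the right-invariance of Haar measure under the maps $x\mapsto x\circ h$.

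One point in your Step 2 deserves more care than the phrase ``since $\Omega$ is a bounded smooth domain it is an $(\varepsilon,\delta)$-domain for the Carnot--Carath\'eodory metric'' suggests. This is \emph{not} automatic: Euclidean smoothness of $\partial\Omega$ does not by itself control the CC geometry near characteristic points of the boundary, and in Carnot groups of step $\ge 3$ smooth domains can fail to be uniform domains. The claim is true in $\mathbb{H}^N$ (a step-2 group): bounded $C^{1,1}$ domains are uniform/NTA domains for the CC metric (Capogna--Garofalo, Monti--Morbidelli), and then the Garofalo--Nhieu extension theorem applies. You should cite this explicitly rather than treat it as obvious; note that exactly the same geometric fact is what makes $\Omega$ a John domain in the alternative route, so neither approach escapes it. With that reference supplied, both your Steps 1--3 and your shortcut via \cite{Koskela} are sound, and either yields the lemma.
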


Taking into account Lemma \ref{l2.2} and Lemma \ref{emb}, for $\alpha>0$, we consider the following equivalent norm $\|\cdot\|$ on $HW_0^{1,p}(\Om)$ defined by
\begin{equation}\label{eqnorm}
\|u\|_{HW_0^{1,p}(\Om)}:=\left(\int_{\Om}|\na_H u|^p\,dx+\alpha\int_{\mathbb{H}^N}\int_{\mathbb{H}^N}{|u(x)-u(y)|^p}\,d\mu\right)^\frac{1}{p}.
\end{equation}

To provide a rigorous framework for our analysis, we provide below the notion of weak solutions. For this, first we define the associated zero boundary condition as follows:

\begin{Definition}[Dirichlet Boundary Condition]\label{rediri}
Let $u$ be such that $u = 0$ in $\mathbb{H}^N \setminus \Omega$. We say that $u \leq 0$ on $\partial\Omega$ if, for every $\theta > 0$, it holds that $(u-\theta)^+\in HW_0^{1,p}(\Om)$. Further, we say that $u = 0$ on $\partial\Omega$ if $u$ is nonnegative and satisfies $u \leq 0$ on $\partial\Omega$.
\end{Definition}

We now define weak solutions to the problem \eqref{meqn}.

\begin{Definition}[Weak Solution]\label{wksoldef}
Let $f \in L^1(\Omega)\setminus{0}$ be nonnegative and let $\Omega\subset \mathbb{H}^N$ be a bounded smooth domain. A function $u \in HW^{1,p}_{\mathrm{loc}}(\Omega) \cap L^{p-1}(\Omega)$ is called a weak solution to problem \eqref{meqn} if $u=0$ in $\mathbb{H}^N\setminus\Om$ such that $u=0$ on $\partial\Om$ as in Definition \ref{rediri} and the following conditions hold:
\begin{itemize}
\item For every subset $\omega \Subset \Omega$, there exists a constant $C = C(\omega) > 0$ such that $u \geq C$ in $\omega$;
\item For every $\varphi \in C_c^1(\Omega)$, one has
\begin{equation}\label{wksoleqn}
\begin{split}
&\int_{\Om}|\na_H u|^{p-2}\na_H u\na_H\varphi\,dx+\alpha\int_{\mathbb{H}^N}\int_{\mathbb{H}^N}J_p(u(x)-u(y))(\varphi(x)-\varphi(y))\,d\mu\\
&\qquad= \int_{\Omega} f(x) u^{-\delta(x)} \varphi(x)\,dx.
\end{split}
\end{equation}
\end{itemize}
\end{Definition}

We observe that Definition \ref{wksoldef} is well stated by Lemma \ref{l2.2} and Lemma \ref{l2.3}.

\subsection*{Auxiliary results}
In this subsection, we present several auxiliary results. The first, taken from \cite[Theorem 9.14]{var}, plays a key role in establishing the existence of approximate solutions.
\begin{Theorem}\label{MB}
Let $V$ be a real separable reflexive Banach space and $V^*$ be the dual of $V$. Suppose that $T:V\to V^{*}$ is a coercive and demicontinuous monotone operator. Then $T$ is surjective, i.e., given any $f\in V^{*}$, there exists $u\in V$ such that $T(u)=f$. If $T$ is strictly monotone, then $T$ is also injective.  
\end{Theorem}

The following result holds along with the lines of the proof of \cite{Canino}.
\begin{Lemma}\label{mainappos}
Let $\gamma>0$ and let $u$ be nonnegative with $u^{\max\big\{\frac{\gamma+p-1}{p},1\big\}}\in HW^{1,p}_0(\Omega).$
Then $u$ fulfills zero  Dirichlet boundary conditions in the sense of Definition \ref{rediri}.
\end{Lemma}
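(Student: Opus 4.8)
\textbf{Setup and reduction.} The plan is to reduce the statement to the fact that a nonnegative function whose suitable power lies in $HW^{1,p}_0(\Om)$ can be truncated at a small level $\theta>0$ so that $(u-\theta)^+$ is controlled by that power. Write $\beta := \max\{\tfrac{\gamma+p-1}{p},1\}\ge 1$ and $v := u^\beta \in HW^{1,p}_0(\Om)$. Since $u\ge 0$ and $u=0$ in $\mathbb{H}^N\setminus\Om$ (this is implicit in ``$v\in HW^{1,p}_0(\Om)$'' together with $u\ge0$), we also have $u=0$ in $\mathbb{H}^N\setminus\Om$. Fix $\theta>0$; by Definition~\ref{rediri} it suffices to show $(u-\theta)^+ \in HW^{1,p}_0(\Om)$, and then, since $\theta>0$ is arbitrary and $u\ge0$, conclude that $u=0$ on $\partial\Om$ in the stated sense.

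\textbf{Main step: comparing $(u-\theta)^+$ with $v$.} On the set $\{u>\theta\}$ we have $u = v^{1/\beta}$ with $v>\theta^\beta>0$, and the map $t\mapsto t^{1/\beta}$ is Lipschitz on $[\theta^\beta,\infty)$ with constant $\tfrac1\beta \theta^{\beta-1}\cdot(\text{harmless factor})$ — more precisely, for $t\ge \theta^\beta$ one has $|\tfrac{d}{dt}t^{1/\beta}| = \tfrac1\beta t^{1/\beta-1}\le \tfrac1\beta \theta^{1-\beta}$ when $\beta\ge1$. Hence the composition $w := \Phi(v)$, where $\Phi(t) := (t^{1/\beta}-\theta)^+$ is globally Lipschitz on $[0,\infty)$ (equal to $0$ for $t\le\theta^\beta$ and Lipschitz with the above constant beyond), satisfies $w = (u-\theta)^+$. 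Since $v\in HW^{1,p}_0(\Om)$ and $\Phi$ is Lipschitz with $\Phi(0)=0$, the chain rule for Sobolev functions on the Heisenberg group gives $w\in HW^{1,p}(\mathbb{H}^N)$ with $\na_H w = \Phi'(v)\na_H v$ a.e., so $\na_H w\in L^p$; moreover $w=0$ wherever $v=0$, in particular in $\mathbb{H}^N\setminus\Om$, so $w\in HW^{1,p}_0(\Om)$. (If one prefers to avoid invoking the chain rule for a general Lipschitz $\Phi$, one can instead first note $v\in HW^{1,p}_0(\Om)$ implies $\min\{v,M\}\to v$ and approximate $v$ by bounded truncations, on each of which $\Phi$ is $C^1$, then pass to the limit using the dominated convergence of gradients.) This is exactly the argument underlying \cite{Canino}, transplanted to the sub-Riemannian setting, where the only structural inputs are the Leibniz/chain rule for $\na_H$ on Sobolev functions and the locality of $\na_H$, both available for $HW^{1,p}$.

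\textbf{Conclusion.} For each $\theta>0$ we have produced $(u-\theta)^+ = w \in HW^{1,p}_0(\Om)$, which is precisely the requirement $u\le 0$ on $\partial\Om$ in Definition~\ref{rediri}; combined with $u\ge 0$, this yields $u=0$ on $\partial\Om$ in the sense of that definition, proving the Lemma.

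\textbf{Expected main obstacle.} The only delicate point is justifying the chain/truncation rule $w=\Phi(v)\in HW^{1,p}_0(\Om)$ with the correct gradient in the Heisenberg setting, and in particular checking that membership in the ``$0$''-space is preserved — i.e. that $w$ genuinely vanishes outside $\Om$ and is approximable by the truncated net within $HW^{1,p}_0(\Om)$. Everything else (the Lipschitz estimate for $t\mapsto t^{1/\beta}$ on $[\theta^\beta,\infty)$, arbitrariness of $\theta$) is routine. One should also double-check the borderline case $\beta=1$ (i.e. $\gamma+p-1\le p$, equivalently $\gamma\le1$), where $v=u$ directly and the statement is immediate, and the case $\beta=\tfrac{\gamma+p-1}{p}>1$, where the Lipschitz constant blows up as $\theta\to0$ but this is irrelevant since $\theta$ is fixed before the estimate is used.
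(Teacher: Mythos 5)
Your proof is correct and follows essentially the same route the paper intends: the paper does not write out a proof but simply cites the argument of Canino--Sciunzi--Trombetta, and your Lipschitz-composition argument (with $\Phi(t)=(t^{1/\beta}-\theta)^+$, Lipschitz since $\beta\ge 1$, applied to $v=u^\beta\in HW^{1,p}_0(\Omega)$) is precisely that argument transplanted to the horizontal Sobolev setting, with the fractional seminorm handled automatically either via the pointwise bound $|\Phi(v)(x)-\Phi(v)(y)|\le L\,|v(x)-v(y)|$ or via Lemma~\ref{l2.3}.
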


For the following algebraic inequality, see \cite[Lemma 2.1]{Dama}.
\begin{Lemma}\label{alg}
Let $1<p<\infty$. Then for every $x,y\in\mathbb{R}^k$, there exists a constant $C=C(p)>0$ such that
$$
\langle J_p(x)-J_p(y),x-y\rangle\geq C(|x|+|y|)^{p-2}|x-y|^2.
$$
\end{Lemma}
Further, for the following algebraic inequality, we refer to \cite[Lemma A.2]{BP}.
\begin{Lemma}\label{BPalg}
Let $1<p<\infty$ and $g:\mathbb{R}\to\mathbb{R}$ be an increasing function. We define
$$
G(t)=\int_{0}^{t}g'(s)^\frac{1}{p}\,ds,\quad t\in\mathbb{R}.
$$
Then for every $a,b\in\mathbb{R}$, we have
$$
J_p(a-b)(g(a)-g(b))\geq |G(a)-G(b)|^p.
$$
\end{Lemma}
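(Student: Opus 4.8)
\textbf{Proof strategy for Theorem~\ref{BPalg}.}

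The statement to be proved is the algebraic inequality
\[
J_p(a-b)\,(g(a)-g(b)) \ge |G(a)-G(b)|^p,
\qquad a,b\in\mathbb{R},
\]
where $g$ is increasing and $G(t)=\int_0^t g'(s)^{1/p}\,ds$. By symmetry of both sides under swapping $a$ and $b$ (recall $J_p(a-b)(g(a)-g(b)) = |a-b|^{p-2}(a-b)(g(a)-g(b))$ is symmetric because $g$ increasing makes $(a-b)$ and $(g(a)-g(b))$ have the same sign), I may assume without loss of generality that $a\ge b$. In that case the left-hand side equals $(a-b)^{p-1}(g(a)-g(b))$, and since $G$ is nondecreasing the right-hand side equals $(G(a)-G(b))^p$. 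So the whole claim reduces to showing
\[
(a-b)^{p-1}\bigl(g(a)-g(b)\bigr) \ge \bigl(G(a)-G(b)\bigr)^p
\qquad\text{for } a\ge b.
\]

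The plan is to write all three increments as integrals over $[b,a]$ and apply H\"older's inequality. Concretely, $g(a)-g(b)=\int_b^a g'(s)\,ds$ and $G(a)-G(b)=\int_b^a g'(s)^{1/p}\,ds$. Applying H\"older with exponents $p$ and $p'=p/(p-1)$ to the latter integral gives
\[
G(a)-G(b) = \int_b^a g'(s)^{1/p}\cdot 1\,ds
\le \left(\int_b^a g'(s)\,ds\right)^{1/p}\left(\int_b^a 1\,ds\right)^{1/p'}
= \bigl(g(a)-g(b)\bigr)^{1/p}\,(a-b)^{1/p'}.
\]
Raising both sides to the power $p$ yields exactly $(G(a)-G(b))^p \le (g(a)-g(b))\,(a-b)^{p/p'} = (g(a)-g(b))(a-b)^{p-1}$, which is the desired inequality. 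For the edge case $a=b$ both sides vanish and there is nothing to prove; one should also note that if $g$ is merely increasing (not necessarily $C^1$) the representation $g(a)-g(b)=\int_b^a g'(s)\,ds$ may need the absolute-continuity caveat, but under the implicit smoothness used throughout (or by a standard approximation of $g$ by smooth increasing functions) this is not an issue.

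There is essentially no serious obstacle here; the only point requiring a moment of care is justifying that the left-hand side is genuinely nonnegative and symmetric, so that the reduction to $a\ge b$ is legitimate — this uses monotonicity of $g$ to ensure $a-b$ and $g(a)-g(b)$ share a sign — and then the inequality is a one-line consequence of H\"older. Since the result is quoted from \cite[Lemma A.2]{BP}, it suffices to record this short argument for completeness.
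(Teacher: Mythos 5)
Your argument is correct and is essentially the proof given in the cited reference \cite[Lemma A.2]{BP}: reduce by symmetry to $a\ge b$, write the increments as integrals, and apply H\"older with exponents $p$ and $p'$. One small refinement worth recording: the absolute-continuity caveat you raise at the end is in fact unnecessary, because for any increasing $g$ Lebesgue's differentiation theorem already gives the one-sided inequality $\int_b^a g'(s)\,ds \le g(a)-g(b)$, which is exactly the direction your chain of estimates needs, so the H\"older step yields $(G(a)-G(b))^p \le \bigl(\int_b^a g'\bigr)(a-b)^{p-1} \le (g(a)-g(b))(a-b)^{p-1}$ with no smoothness or approximation argument required.
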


\section{Preliminaries}
\subsection{Approximate problem}
For $n\in\mathbb{N}$, $\alpha> 0$ and a nonnegative function $f\in L^{1}(\Omega)\setminus\{0\}$, define
\[
f_n(x):=\min\{f(x),n\},
\]
and consider the following approximated problem:
\begin{equation}\label{approxeqn}
M_\alpha\,u = f_n(x)\Big(u^{+}+\frac{1}{n}\Big)^{-\delta(x)} \quad \text{in }\Omega, 
\qquad 
u=0 \quad \text{in }\mathbb{H}^{N}\setminus\Omega.
\end{equation}
The main objective of this section is to show the existence of $u_n$ and to derive suitable a priori estimates.

\begin{Lemma}\label{approx}
Let $\delta:\overline{\Omega}\to(0,\infty)$ be a continuous function. Then, for each $n\in\mathbb{N}$, the problem \eqref{approxeqn} admits a unique positive solution 
\[
u_n\in HW_{0}^{1,p}(\Omega)\cap L^{\infty}(\Omega).
\]
Furthermore, the sequence $\{u_n\}$ is monotone increasing, i.e., $u_{n+1}\geq u_n$ in $\Omega$ for all $n\in\mathbb{N}$. In addition, for every $n\in\mathbb{N}$ and every compact set $\omega\Subset\Omega$, there exists a constant $C(\omega)>0$, independent of $n$, such that
\[
u_n \geq C(\omega) > 0 \quad \text{in }\omega.
\]
\end{Lemma}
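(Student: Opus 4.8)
\textbf{Proof plan for Lemma~\ref{approx}.}
The plan is to break the statement into four parts: (i) existence and uniqueness of a solution $u_n \in HW_0^{1,p}(\Omega)$ of the approximate problem; (ii) $L^\infty$-regularity of $u_n$; (iii) positivity of $u_n$ in $\Omega$ and the monotonicity $u_{n+1}\ge u_n$; and (iv) the uniform (in $n$) local lower bound $u_n \ge C(\omega)>0$ on compact subsets.

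For part (i), since the right-hand side $f_n(x)\big(u^+ + 1/n\big)^{-\delta(x)}$ is bounded (by $n\cdot n^{\delta_{\max}}$, where $\delta_{\max}=\max_{\overline\Omega}\delta$) and continuous in $u$, I would proceed by a standard fixed-point/Minty-Browder argument. First I would fix $w\in L^{p}(\Omega)$ and consider the frozen problem $M_\alpha u = f_n(x)\big(w^+ + 1/n\big)^{-\delta(x)}$; the operator $M_\alpha : HW_0^{1,p}(\Omega)\to \big(HW_0^{1,p}(\Omega)\big)^*$ associated with the left-hand side of \eqref{wksoleqn} is monotone (by Lemma~\ref{alg} for the local part and elementary monotonicity of $J_p$ for the nonlocal part), coercive (the energy norm controls $\|u\|^p$, and the right-hand side is in $L^\infty\subset (HW_0^{1,p})^*$ by Lemma~\ref{emb}), and demicontinuous, so Theorem~\ref{MB} gives a unique solution. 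Denoting this solution by $u = S(w)$, one checks that $S$ maps a large ball of $L^p(\Omega)$ into itself (the a priori bound on $\|u\|$ depends only on $n$ through the $L^\infty$ norm of the datum) and is compact (by the compact embedding $HW^{1,p}\hookrightarrow L^p$ from Lemma~\ref{emb}); Schauder's fixed point theorem then yields a solution $u_n$ of \eqref{approxeqn}. Uniqueness of $u_n$ follows because any two solutions $u_n, v_n$ satisfy, after testing the difference of equations with $(u_n - v_n)$ and using strict monotonicity (Lemma~\ref{alg}), that the monotone increasing map $t\mapsto -t^{-\delta(x)}$ makes the full operator strictly monotone; more directly, one tests with $(u_n-v_n)^+$ and uses that $t\mapsto f_n(x)(t^+ + 1/n)^{-\delta(x)}$ is nonincreasing to conclude $u_n\le v_n$, and symmetrically.

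For part (ii), $L^\infty$ regularity: with the datum bounded by $C(n)$ in $L^m$ for any $m$, in particular for $m>Q/p$, a Stampacchia / De Giorgi truncation argument applied to the mixed operator — testing with $(u_n-k)^+$ for level $k$, discarding the nonlocal term by its sign (since $\big(J_p(u_n(x)-u_n(y)),(u_n-k)^+(x)-(u_n-k)^+(y)\big)\ge 0$), and using the Sobolev inequality Lemma~\ref{emb} on the local part — gives $u_n\in L^\infty(\Omega)$. (Alternatively one invokes the boundedness results of \cite{Zhang, DC} directly.) Part (iii): positivity of $u_n$ follows by testing with $u_n^-$, which forces $u_n^-\equiv 0$ (the right-hand side is nonnegative and vanishes where $u_n\le 0$, and the left-hand side tested with $-u_n^-$ is $\le -\|u_n^-\|^p$ using again the sign of the nonlocal term); a strong maximum / weak Harnack statement then upgrades $u_n\ge 0$ to $u_n>0$ a.e.\ in $\Omega$ since $f_n\not\equiv 0$. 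For the monotonicity $u_{n+1}\ge u_n$: note $f_{n+1}\ge f_n$ and, since $u_n>0$, the map $t\mapsto f_n(x)(t + 1/n)^{-\delta(x)}$ is nonincreasing while $f_{n+1}(x)(t+1/(n+1))^{-\delta(x)}\ge f_n(x)(t+1/n)^{-\delta(x)}$ for $t\ge 0$; testing the difference of the two equations with $(u_n - u_{n+1})^+\in HW_0^{1,p}(\Omega)$, the left-hand side contributes a nonnegative term bounded below by a multiple of $\|\,(u_n-u_{n+1})^+\,\|^p$ (after discarding the nonlocal cross term with the correct sign via the monotonicity of $J_p$), while the right-hand side is $\le 0$ on the set $\{u_n>u_{n+1}\}$, forcing $(u_n-u_{n+1})^+\equiv 0$.

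Part (iv), the uniform local lower bound, is the main obstacle. Here the point is to produce $C(\omega)>0$ independent of $n$. The key input is the weak Harnack inequality for the mixed operator on the Heisenberg group, in the spirit of \cite{Zhang}: since $u_1\le u_n$ by monotonicity and $u_1$ is a fixed positive supersolution of a (non-singular) mixed equation with a fixed positive right-hand side $f_1(x)(u_1^+ + 1)^{-\delta(x)}$ bounded below on any $\omega\Subset\Omega$ (using $u_1\in L^\infty$ and $u_1>0$), the weak Harnack inequality applied to $u_1$ gives $\inf_\omega u_1 \ge c(\omega)>0$; then $u_n\ge u_1 \ge c(\omega)$ for every $n$. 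Thus the uniform bound reduces to the single statement $\operatorname{ess\,inf}_\omega u_1>0$, which is exactly what the weak Harnack inequality for $M_\alpha$ (nonnegative supersolutions cannot vanish on a set of positive measure in the interior unless they vanish identically) provides; alternatively, since $u_1$ solves $M_\alpha u_1 = g$ with $g := f_1(x)(u_1+1)^{-\delta(x)}\in L^\infty(\Omega)$, $g\ge 0$, $g\not\equiv 0$, one compares $u_1$ from below with the solution $\underline u$ of $M_\alpha \underline u = g\chi_{\omega'}$ on a slightly larger ball, which is strictly positive on $\omega$ by the strong minimum principle, whence $u_n\ge u_1\ge \underline u\ge C(\omega)$ on $\omega$ for all $n$. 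Throughout, the recurring technical care is to ensure all test functions used belong to $HW_0^{1,p}(\Omega)$ (truncations and positive/negative parts of differences of $HW_0^{1,p}$ functions do) and to consistently exploit the sign of the nonlocal bilinear term so that it never obstructs the estimates.
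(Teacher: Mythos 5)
Your proposal is correct and tracks the paper's proof almost step for step. The one genuine divergence is the existence mechanism: you freeze $w$, solve the resulting non-singular problem by Minty--Browder, and close via Schauder's fixed point theorem, whereas the paper observes that the \emph{entire} operator
\[
\langle S(v),\varphi\rangle=\int_{\Omega}|\nabla_H v|^{p-2}\nabla_H v\,\nabla_H\varphi\,dx+\alpha\!\int\!\!\int J_p(v(x)-v(y))(\varphi(x)-\varphi(y))\,d\mu-\int_{\Omega}f_n\big(v^++\tfrac1n\big)^{-\delta(x)}\varphi\,dx
\]
is already monotone on $HW_0^{1,p}(\Omega)$, because $t\mapsto(t^++1/n)^{-\delta(x)}$ is nonincreasing; together with coercivity and demicontinuity this lets Theorem~\ref{MB} be applied directly, with no fixed-point argument and no need to verify compactness of a solution map. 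The trade-off is that the direct route requires checking demicontinuity of the full nonlinear operator (dominated convergence on the singular term plus weak $L^{p'}$ convergence of the gradient and difference-quotient fields), while your Schauder route moves that work into verifying compactness and continuity of the frozen solution map. The paper's Remark~\ref{altrmk} in fact explicitly records your Schauder approach as an equivalent alternative, citing \cite[Lemma 3.1]{GU21}, so both are endorsed. Everything else in your outline — $L^\infty$ via Stampacchia truncation with the nonlocal term discarded by sign, nonnegativity via testing with $(u_n)_-$, monotonicity and uniqueness via $(u_n-u_{n+1})^+$ using $f_n\le f_{n+1}$ and the monotone singular term, and the $n$-uniform interior lower bound by applying the weak Harnack inequality of \cite[Theorem 1.4]{Zhang} to the fixed function $u_1$ and then propagating it through $u_n\ge u_1$ — is exactly the paper's argument.
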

\begin{proof}
We denote the space $HW_{0}^{1,p}(\Omega)$ by $V$, and let $V^{*}$ be its dual space.  
Define the operator $S:V\to V^{*}$ by
\begin{align*}
\langle S(v),\varphi\rangle 
&:= \int_{\Omega} |\nabla_H v|^{p-2}\na_H v\na_H\varphi\,dx
+ \alpha \int_{\mathbb{H}^{N}}\int_{\mathbb{H}^{N}} 
J_{p}(v(x)-v(y))\big(\varphi(x)-\varphi(y)\big)\,d\mu\\
&\qquad- \int_{\Omega} f_n(x)\Big(v^{+}+\frac{1}{n}\Big)^{-\delta(x)} \varphi\,dx,
\end{align*}
for all $v,\varphi\in V$.  
Using Lemma~\ref{emb} together with Hölder's inequality, it is straightforward to verify that the mapping $S$ is well defined.

\begin{itemize}
    \item \textbf{Coercivity:}  
    By Lemma~\ref{emb} and Hölder's inequality, we have
    \[
    \langle S(v), v\rangle
    = \|v\|^{p}
    - \int_{\Omega} f_n(x)\Big(v^{+}+\frac{1}{n}\Big)^{-\delta(x)} v\,dx
    \geq \|v\|^{p} - C\|v\|,
    \]
    for some constant $C>0$.  
    Since $1<p<Q$, it follows that $S$ is coercive.

\item \textbf{Demicontinuity:}  
Let $v_k, v \in HW_{0}^{1,p}(\Omega)$ be such that $\|v_k - v\| \to 0$ as $k \to \infty$.  
Then, up to a subsequence (still denoted by $v_k$), we have
\[
v_k \to v \quad \text{a.e. in }\Omega,
\]
and the sequence
\begin{equation}\label{bdd1}
\left\{
\frac{J_p\big(v_k(x)-v_k(y)\big)}{|y^{-1}\circ x|^{\frac{Q+s p}{p'}}}
\right\}_{k\in\mathbb{N}}
\quad \text{is bounded in } L^{p'}(\mathbb{H}^{2N}).
\end{equation}
The pointwise convergence of $v_k$ to $v$ yields
\[
\frac{J_p(v_k(x)-v_k(y))}{|y^{-1}\circ x|^{\frac{Q+s p}{p'}}}
\;\to\;
\frac{J_p(v(x)-v(y))}{|y^{-1}\circ x|^{\frac{Q+s p}{p'}}}
\quad \text{a.e.\ in }\mathbb{H}^{2N}.
\]
Using this fact and the boundedness in \eqref{bdd1}, we conclude (up to a subsequence) that
\[
\frac{J_p(v_k(x)-v_k(y))}{|y^{-1}\circ x|^{\frac{Q+s p}{p'}}}
\rightharpoonup
\frac{J_p(v(x)-v(y))}{|y^{-1}\circ x|^{\frac{Q+s p}{p'}}}
\quad \text{weakly in } L^{p'}(\mathbb{H}^{2N}).
\]
Since
\[
\frac{\varphi(x)-\varphi(y)}{|y^{-1}\circ x|^{\frac{Q+s p}{p}}} \in L^{p}(\mathbb{H}^{2N}),
\]
we deduce that
\begin{equation}\label{demi1}
\lim_{k\to\infty}
\int_{\mathbb{H}^{N}}\int_{\mathbb{H}^{N}}
J_p(v_k(x)-v_k(y))\,(\varphi(x)-\varphi(y))\, d\mu
=
\int_{\mathbb{H}^{N}}\int_{\mathbb{H}^{N}}
J_p(v(x)-v(y))\,(\varphi(x)-\varphi(y))\, d\mu,
\end{equation}
for all $\varphi \in HW_{0}^{1,p}(\Omega)$.

Next, the strong convergence
\[
\nabla_H v_k \to \nabla_H v \quad\text{in } L^{p}(\Omega)
\]
implies the pointwise a.e.\ convergence
\[
v_k(x) \to v(x), \qquad \nabla_H v_k(x)\to\nabla_H v(x),
\]
for a.e.\ $x\in\Omega$. Therefore, it follows that
\[
|\nabla_H v_k(x)|^{p-2}\nabla_H v_k(x)
\;\to\;
|\nabla_H v(x)|^{p-2}\nabla_H v(x)
\quad\text{a.e.\ in }\Omega.
\]
Moreover, the uniform bound
\[
\|\,|\nabla_H v_k|^{p-1}\,\|_{L^{p'}(\Omega)}^{p'}
=
\int_{\Omega} |\nabla_H v_k|^{p}\, dx
\le C,
\]
for some constant $C>0$ independent of $k$, implies (up to a subsequence)
\[
|\nabla_H v_k|^{p-2}\nabla_H v_k
\rightharpoonup
|\nabla_H v|^{p-2}\nabla_H v
\quad\text{weakly in } L^{p'}(\Omega).
\]
Thus, for any $\varphi\in V$,
\begin{equation}\label{eq:limit_identity}
\lim_{k\to\infty}
\int_{\Omega}
|\nabla_H v_k|^{p-2}\nabla_H v_k  \nabla_H\varphi\, dx
=
\int_{\Omega}
|\nabla_H v|^{p-2}\nabla_H v  \nabla_H\varphi\, dx.
\end{equation}

Finally, by the dominated convergence theorem,
\begin{equation}\label{dctap1}
\lim_{k\to\infty}
\int_{\Omega}
f_n(x)\Big(v_k^{+}+\frac{1}{n}\Big)^{-\delta(x)}\varphi\, dx
=
\int_{\Omega}
f_n(x)\Big(v^{+}+\frac{1}{n}\Big)^{-\delta(x)} \varphi\, dx,
\qquad \forall\,\varphi\in HW_{0}^{1,p}(\Omega).
\end{equation}

Combining \eqref{demi1}, \eqref{eq:limit_identity}, and \eqref{dctap1}, we conclude that
\[
\lim_{k\to\infty} \langle S(v_k), \varphi\rangle
=
\langle S(v), \varphi\rangle,
\quad \forall\, \varphi\in HW_{0}^{1,p}(\Omega),
\]
and therefore the operator $S$ is demicontinuous.

\item \textbf{Monotonicity of $S$:}  
Let $u_1, u_2 \in V$. We compute
\begin{align*}
&\langle S(u_1)-S(u_2), u_1-u_2\rangle\\
&= \int_{\Omega} \big(|\nabla_H u_1(x)|^{p-2}\na_H u_1(x) - |\na_H u_2(x)|^{p-2}\na_H u_2(x)\big)\,(\nabla_H u_1(x)-\nabla_H u_2(x))\,dx \\
&\quad + \alpha\int_{\mathbb{H}^{N}}\int_{\mathbb{H}^{N}}
\big(J_p(u_1(x)-u_1(y)) - J_p(u_2(x)-u_2(y))\big)\big((u_1-u_2)(x)-(u_1-u_2)(y)\big)\, d\mu \\
&\qquad - \int_{\Omega} f_n(x)\Big[\Big(u_1^{+}+\frac{1}{n}\Big)^{-\delta(x)}
 - \Big(u_2^{+}+\frac{1}{n}\Big)^{-\delta(x)}\Big](u_1-u_2)\,dx.
\end{align*}
By Lemma~\ref{alg}, the first integral is nonnegative.  
A direct check shows that the second integral is nonpositive.  
Hence $S$ is monotone.

\end{itemize}

By Theorem~\ref{MB}, the operator $S$ is surjective. Therefore, for each $n\in\mathbb{N}$,  
there exists $u_n \in V$ such that
\begin{equation}\label{auxeqn}
\begin{split}
&\int_{\Omega} |\nabla_H u_n(x)|^{p-2}\na_H u_n(x)\nabla_H\varphi\,dx+ \alpha \int_{\mathbb{H}^{N}}\int_{\mathbb{H}^{N}}
J_p\big(u_n(x)-u_n(y)\big)\big(\varphi(x)-\varphi(y)\big)\,d\mu\\
&\quad=
\int_{\Omega} f_n(x) \Big(u_n^{+}+\frac{1}{n}\Big)^{-\delta(x)} \varphi\,dx,
\end{split}
\end{equation}
for all $\varphi \in V$.

\medskip
\noindent \textbf{Nonnegativity.}
Choosing $\varphi = (u_n)_{-} := \min\{u_n, 0\}$ in \eqref{auxeqn}, we obtain
\begin{equation}\label{posi}
\begin{aligned}
&\int_{\Omega} |\nabla_H u_n(x)|^{p-2}\na_H u_n(x)\nabla_H (u_n)_{-}\,dx
+\alpha \int_{\mathbb{H}^{N}}\int_{\mathbb{H}^{N}}
J_p(u_n(x)-u_n(y))\big((u_n)_{-}(x)-(u_n)_{-}(y)\big)\,d\mu \\
&\qquad = \int_{\Omega}
f_n(x)\Big(u_n^{+}+\frac{1}{n}\Big)^{-\delta(x)} (u_n)_{-}\,dx
\;\le 0.
\end{aligned}
\end{equation}

For any $x,y\in\mathbb{H}^{N}$, estimate (3.13) in \cite[page~12]{GU21} gives
\begin{equation}\label{pos}
J_p(u_n(x)-u_n(y))
\big((u_n)_{-}(x)-(u_n)_{-}(y)\big)
\ge 
\big|(u_n)_{-}(x)-(u_n)_{-}(y)\big|^{p}.
\end{equation}
Using \eqref{pos} in \eqref{posi}, and applying Lemma~\ref{alg}, we deduce that
\[
\|(u_n)_{-}\|^{p} = 0,
\]
which implies $(u_n)_{-}=0$. Therefore,
\begin{equation}\label{non-neg}
u_n \ge 0 \quad \text{in }\mathbb{H}^{N},\qquad \text{for every } n\in\mathbb{N}.
\end{equation}

\textbf{Monotonicity and Uniqueness.}  
Fix $n\in\mathbb{N}$ and let $u_n, u_{n+1} \in HW_{0}^{1,p}(\Omega)$ be the corresponding
solutions to \eqref{approxeqn}. By \eqref{non-neg}, both $u_n$ and $u_{n+1}$ are nonnegative in
$\mathbb{H}^N$. For every $\varphi\in HW_{0}^{1,p}(\Omega)$, they satisfy
\begin{equation}\label{auxeqn11}
\begin{split}
&\int_{\Omega} |\nabla_H u_n(x)|^{p-2}\na_H u_n(x)\nabla_H\varphi\,dx
+\alpha\int_{\mathbb{H}^{N}}\int_{\mathbb{H}^{N}}
J_p(u_n(x)-u_n(y))(\varphi(x)-\varphi(y))\,d\mu\\
&=
\int_{\Omega} f_n(x)\Big(u_n+\frac{1}{n}\Big)^{-\delta(x)}\varphi\,dx,
\end{split}
\end{equation}
and
\begin{equation}\label{auxeqn21}
\begin{split}
&\int_{\Omega} |\nabla_H u_{n+1}(x)|^{p-2}\na_H u_{n+1}(x)\nabla_H\varphi\,dx
+\alpha\int_{\mathbb{H}^{N}}\int_{\mathbb{H}^{N}}
J_p(u_{n+1}(x)-u_{n+1}(y))(\varphi(x)-\varphi(y))\,d\mu\\
&=
\int_{\Omega} f_{n+1}(x)\Big(u_{n+1}+\frac{1}{n+1}\Big)^{-\delta(x)}\varphi\,dx.
\end{split}
\end{equation}
Choosing $\varphi = w := (u_n - u_{n+1})^{+} \in HW_{0}^{1,p}(\Omega)$ in both
\eqref{auxeqn11}–\eqref{auxeqn21} and subtracting, we obtain, using $f_n \le f_{n+1}$ a.e.,
\begin{align*}
&\int_{\Omega} f_n(x)\Big(u_n+\tfrac{1}{n}\Big)^{-\delta(x)}w\,dx
-
\int_{\Omega} f_{n+1}(x)\Big(u_{n+1}+\tfrac{1}{n+1}\Big)^{-\delta(x)} w\,dx \\
&\qquad\le 
\int_{\Omega} f_{n+1}(x)\,w\,
\frac{
\Big(u_{n+1}+\tfrac{1}{n+1}\Big)^{\delta(x)}
-
\Big(u_n+\tfrac{1}{n}\Big)^{\delta(x)}
}{
\Big(u_n+\tfrac{1}{n}\Big)^{\delta(x)}
\Big(u_{n+1}+\tfrac{1}{n+1}\Big)^{\delta(x)}
}\,dx
\le 0.
\end{align*}
Thus,
\begin{equation}\label{in-neg}
\begin{split}
&\int_{\Omega} (|\nabla_H u_n(x)|^{p-2}\na_H u_n(x)-|\nabla_H u_{n+1}(x)|^{p-2}\na_H u_{n+1}(x))\nabla_H w\,dx\\
&\quad+
\int_{\mathbb{H}^{N}}\int_{\mathbb{H}^{N}}
\big(J_p(u_n(x)-u_n(y))-J_p(u_{n+1}(x)-u_{n+1}(y))\big)(w(x)-w(y))\,d\mu
\le 0.
\end{split}
\end{equation}
Arguing as in the proof of \cite[Lemma~9]{LL}, one has
\[
\big(J_p(u_n(x)-u_n(y)) - J_p(u_{n+1}(x)-u_{n+1}(y))\big)(w(x)-w(y)) \ge 0,
\quad\text{for a.e. } (x,y)\in\mathbb{H}^{2N}.
\]
Combining this with \eqref{in-neg} yields
\[
\int_{\Omega} (|\nabla_H u_n(x)|^{p-2}\na_H u_n(x)-|\nabla_H u_{n+1}(x)|^{p-2}\na_H u_{n+1}(x))
\nabla_H (u_n-u_{n+1})^{+}\,dx \le 0.
\]
Applying Lemma~\ref{alg}, we conclude that
\[
u_{n+1} \ge u_n \quad\text{in }\Omega.
\]
Uniqueness follows from an analogous argument.

\noindent
\textbf{Boundedness:}
To prove the boundedness of \(u_n\), for any \(k \geq 1\) define
\[
A(k):=\{x\in \Omega : u_n(x)\geq k\}.
\]
Choose \(\varphi_k:=(u_n-k)^+=\max\{u_n-k,0\}\) as a test function in \eqref{approxeqn}. 
Using the estimate (3.9) from \cite[page 11]{GU21}, we obtain
\begin{equation}\label{tstbd1}
\begin{split}
\|\varphi_k\|^{p}\leq C \int_{\Omega} f_n(x)\Big(u_n+\frac{1}{n}\Big)^{-\delta(x)}\,\varphi_k\,dx,
\end{split}
\end{equation}
for some constant $C>0$ independent of $k$.
Therefore, applying Lemma \ref{emb} and using the continuity of the embedding 
\(HW_0^{1,p}(\Omega)\hookrightarrow L^{l}(\Omega)\) for some \(l>p\), we obtain
\begin{multline}\label{tstbd2}
\|\varphi_k\|^{p}
   \leq C\int_{\Omega} n^{\delta(x)+1}\,\varphi_k\,dx
   \leq C\|n^{\delta(x)+1}\|_{L^\infty(\Omega)}
        \int_{A(k)} (u_n - k)\,dx
   \leq C\,|A(k)|^{\frac{l-1}{l}} \|\varphi_k\|,
\end{multline}
for some constant \(C>0\) depending on \(n\). Hence, we obtain
\begin{equation}\label{new}
    \|\varphi_k\|^{p} \leq C\,|A(k)|^{\frac{p(l-1)}{l(p-1)}},
\end{equation}
for some constant \(C>0\) depending on \(n\).  
Now choose \(h>k\) with \(h\geq 1\).  
Since \(u(x)-k \geq h-k\) on \(A(h)\) and \(A(h)\subset A(k)\), using this observation together with \eqref{new}, we deduce
\begin{multline*}
(h-k)^p\,|A(h)|^{\frac{p}{l}} 
\leq \left(\int_{A(h)} (u_n(x)-k)^l\,dx\right)^{\frac{p}{l}}
\leq \left(\int_{A(k)} (u_n(x)-k)^l\,dx\right)^{\frac{p}{l}} 
\leq C\,\|\varphi_k\|^p 
\leq C\,|A(k)|^{\frac{p(l-1)}{l(p-1)}},
\end{multline*}
for some constant \(C>0\) depending on \(n\).  
Consequently, we deduce
\[
|A(h)| \leq \frac{C}{(h-k)^l}\,|A(k)|^{\frac{l-1}{p-1}}.
\]
Notice that \(\frac{l-1}{p-1} > 1\).  
Hence, by applying \cite[Lemma B.1]{Stam}, we conclude
\[
\|u_n\|_{L^\infty(\Omega)} \leq C,
\]
for some constant \(C>0\) depending on \(n\).  
Therefore, we have \(u_n \in L^\infty(\Omega)\).\\
\textbf{Uniform positivity:}  
From \eqref{non-neg}, we have \(u_1 \geq 0\) in \(\mathbb{H}^N\).  
Since \(u_1 = 0\) in \(\mathbb{H}^N \setminus \Omega\) and \(f \not\equiv 0\), it follows that \(u_1 \not\equiv 0\) in \(\Omega\).  
Therefore, by \cite[Theorem 1.4]{Zhang}, for every \(\omega \Subset \Omega\) there exists a constant \(C(\omega) > 0\) such that 
\[
u_1 \geq C(\omega) > 0 \quad \text{in } \omega.
\]  
Using the monotonicity property, we have \(u_n \geq u_1\) in \(\Omega\) for all \(n \in \mathbb{N}\).  
Consequently, for every \(\omega \Subset \Omega\) and every \(n \in \mathbb{N}\),
\[
u_n(x) \geq C(\omega) > 0, \quad x \in \omega,
\]
where \(C(\omega) > 0\) is independent of \(n\).
\end{proof}
\begin{Remark}\label{rmkapprox}
By Lemma \ref{approx}, the monotone sequence \(\{u_n\}\) admits a pointwise limit in \(\Omega\), which we denote by \(u\).  
As a consequence, we have \(u \geq u_n\) in \(\mathbb{H}^N\) for every \(n \in \mathbb{N}\).  
In the following, we will show that \(u\) is indeed a solution to problem \eqref{meqn}.
\end{Remark}

\begin{Remark}\label{altrmk}
We observe that the proof of Lemma \ref{approx} extends naturally to the Euclidean setting.  
Furthermore, Lemma \ref{approx} can also be established via Schauder's fixed point theorem, as illustrated in {\cite[Lemma 3.1]{GU21}}.  
For this approach, the result stated in Lemma {\ref{auxresult}}—which deals with the more general non-singular case—will be useful.  
Its proof follows closely the argument of Lemma \ref{approx}, with the main modification being the consideration of the mapping 
\[
S: V \to V^*, \qquad 
\langle S(v), \varphi \rangle = \int_{\Om}|\na_H v|^{p-2}\na_H\varphi\,dx+\alpha\int_{\mathbb{H}^N} \int_{\mathbb{H}^N} J_p(v(x)-v(y)) (\varphi(x)-\varphi(y)) \, d\mu - \int_{\Omega} g \varphi \, dx,
\]
for all \(v, \varphi \in V\), where \(V = HW_0^{1,p}(\Omega)\) and \(V^*\) denotes its dual.
\end{Remark}

\begin{Lemma}\label{auxresult}
Let $g \in L^{\infty}(\Omega) \setminus \{0\}$ be a nonnegative function in $\Omega$.  
Then, there exists a unique solution $u \in HW_0^{1,p}(\Omega) \cap L^{\infty}(\Omega)$ to the problem
\begin{equation}\label{approxnew}
M_\alpha\,u = g \text{ in } \Omega, \quad u > 0 \text{ in } \Omega, \quad u = 0 \text{ in } \mathbb{H}^N \setminus \Omega.
\end{equation}
Moreover, for every $\omega \Subset \Omega$, there exists a constant $C(\omega) > 0$ such that 
\[
u \geq C(\omega) \quad \text{in } \omega.
\]
\end{Lemma}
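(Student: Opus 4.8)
The plan is to follow verbatim the scheme of Lemma~\ref{approx}, the only change being that the singular approximating nonlinearity $f_n(x)\big(v^{+}+\tfrac1n\big)^{-\delta(x)}$ is replaced by the fixed bounded datum $g$. Set $V=HW_0^{1,p}(\Omega)$, let $V^{*}$ be its dual, and define $S:V\to V^{*}$ as in Remark~\ref{altrmk}, namely $\langle S(v),\varphi\rangle=\int_{\Omega}|\na_H v|^{p-2}\na_H v\,\na_H\varphi\,dx+\alpha\iint_{\mathbb{H}^N\times\mathbb{H}^N}J_p(v(x)-v(y))(\varphi(x)-\varphi(y))\,d\mu-\int_{\Omega}g\varphi\,dx$. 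By Lemma~\ref{l2.2}, Lemma~\ref{emb} and Hölder's inequality, $S$ is well defined on $V$. Coercivity follows from $\langle S(v),v\rangle\ge\|v\|^{p}-\|g\|_{L^{(p^{*})'}(\Omega)}\|v\|_{L^{p^{*}}(\Omega)}\ge\|v\|^{p}-C\|v\|$ together with $p>1$; demicontinuity is proved exactly as in Lemma~\ref{approx}, since the $g$-term is now linear in $\varphi$ and independent of $v$ and so contributes no difficulty; and $S$ is strictly monotone, because already the local part is strictly monotone by Lemma~\ref{alg} (equality forces $\na_H u_1=\na_H u_2$ a.e., hence $u_1-u_2$ is constant on the connected set $\Omega$ and therefore vanishes, both functions being $0$ outside $\Omega$), while the nonlocal part is monotone. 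Theorem~\ref{MB} then yields a unique $u\in V$ with $S(u)=0$, i.e.\ a unique weak solution of \eqref{approxnew}.

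Nonnegativity, boundedness and strict positivity are obtained as in Lemma~\ref{approx}. Testing with $\varphi=u_{-}=\min\{u,0\}$ and using \eqref{pos} (with $u$ in place of $u_n$) together with $\int_{\Omega}g\,u_{-}\,dx\le0$ gives $\|u_{-}\|^{p}\le0$, hence $u\ge0$ in $\mathbb{H}^N$. For the $L^{\infty}$ bound we run the Stampacchia iteration: with $A(k)=\{x\in\Omega:u(x)\ge k\}$ and $\varphi_k=(u-k)^{+}$, estimate (3.9) of \cite{GU21} gives $\|\varphi_k\|^{p}\le C\int_{\Omega}g\,\varphi_k\,dx\le C\|g\|_{L^{\infty}(\Omega)}\int_{A(k)}(u-k)\,dx$; invoking Lemma~\ref{emb} and the embedding $HW_0^{1,p}(\Omega)\hookrightarrow L^{l}(\Omega)$ for some $l>p$, one arrives at $\|\varphi_k\|^{p}\le C|A(k)|^{p(l-1)/(l(p-1))}$, and comparing levels $h>k$ yields $|A(h)|\le C(h-k)^{-l}|A(k)|^{(l-1)/(p-1)}$ with $(l-1)/(p-1)>1$, so \cite[Lemma B.1]{Stam} forces $u\in L^{\infty}(\Omega)$; here the constants depend on $\|g\|_{L^{\infty}(\Omega)}$ rather than on a truncation index $n$. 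Finally, since $g\ge0$ and $g\not\equiv0$ we have $u\not\equiv0$ (otherwise the equation would force $g\equiv0$), and $u$ is a nonnegative weak solution of the mixed equation with nonnegative right-hand side vanishing outside $\Omega$, so the weak Harnack inequality \cite[Theorem~1.4]{Zhang} gives, for every $\omega\Subset\Omega$, a constant $C(\omega)>0$ with $u\ge C(\omega)$ in $\omega$. That $u=0$ on $\partial\Omega$ in the sense of Definition~\ref{rediri} is immediate from $u\in HW_0^{1,p}(\Omega)$ and $u\ge0$.

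I do not expect a genuine obstacle: the statement is essentially the non-singular, $L^{\infty}$-datum specialization of Lemma~\ref{approx}. The most delicate point is the $L^{\infty}$ estimate, where one must check that inequality (3.9) of \cite{GU21} indeed allows the nonlocal contribution to be absorbed into $\|\varphi_k\|^{p}$ and then fix an admissible Sobolev exponent $l>p$; this is routine. A secondary point of care is bookkeeping the dependence of the constants (on $\|g\|_{L^{\infty}(\Omega)}$, $\Omega$, $Q$, $p$, $s$, $\alpha$), so that the positivity constant $C(\omega)$ produced by \cite[Theorem~1.4]{Zhang} is genuinely independent of any approximation parameter — which is what matters when this lemma is used inside the Schauder fixed-point argument alluded to in Remark~\ref{altrmk}.
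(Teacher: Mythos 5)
The proposal is correct and follows the same route the paper intends: Remark~\ref{altrmk} explicitly says the proof of Lemma~\ref{auxresult} mirrors Lemma~\ref{approx} with the Nemytskii term replaced by the fixed datum $g$, and that is exactly what you carry out — coercivity, demicontinuity and (strict) monotonicity of $S$, Minty–Browder for existence and uniqueness, then nonnegativity via testing with $u_-$, Stampacchia for $L^\infty$, and Zhang's weak Harnack for uniform positivity on $\omega\Subset\Omega$. The only point at which you deviate slightly is uniqueness: you obtain it from strict monotonicity of the full operator (using that zero horizontal gradient on the connected set $\Omega$ forces a constant, then the $HW_0^{1,p}$ boundary condition kills the constant), whereas Lemma~\ref{approx} derives comparison/uniqueness by testing the difference of the two equations with $(u_1-u_2)^+$ and invoking Lemma~\ref{alg} together with the Lindgren–Lindqvist inequality; both are standard and yield the same conclusion, with yours arguably the cleaner choice here since the right-hand side no longer depends on $u$.
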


\subsection{A-priori estimates of the approximate solutions}
We proceed to derive a sequence of boundedness estimates (Lemmas \ref{apun1}-\ref{apun2}) for the sequence of positive solutions $\{u_n\}_{n \in \mathbb{N}}$ of problem \eqref{approxeqn}, as guaranteed by Lemma \ref{approx}. These estimates are essential for establishing the existence and regularity of solutions.
\begin{Lemma}\label{apun1} (Variable singular exponent)
Let $\delta:\overline{\Omega}\to(0,\infty)$ be a continuous function satisfying the condition $(P_{\epsilon,\delta_*})$ for some $\epsilon>0$ and some $\delta_*>0$. Let $f\in L^m(\Omega)\setminus\{0\}$ be a nonnegative function, where 
\[
m = \Big(\frac{(\delta_*+p-1)p^*}{p \delta_*}\Big)^{'}.
\] 
Assume that $\{u_n\}_{n\in\mathbb{N}}$ is the sequence of solutions to \eqref{approx} provided by Lemma \ref{approx}.
\begin{enumerate}
    \item[$(i)$] If $\delta_*=1$, then the sequence $\{u_n\}_{n\in\mathbb{N}}$ is uniformly bounded in $HW_0^{1,p}(\Omega)$.
    \item[$(ii)$] If $\delta_*>1$, then the sequence $\left\{u_n^{\frac{\delta_*+p-1}{p}}\right\}_{n\in\mathbb{N}}$ is uniformly bounded in $HW_0^{1,p}(\Omega)$.
\end{enumerate}
\end{Lemma}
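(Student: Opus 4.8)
The plan is to test the approximate equation \eqref{auxeqn} with a suitable power of $u_n$ — specifically $\varphi = u_n^{\delta_*}$ (which lies in $HW_0^{1,p}(\Omega)$ by Lemma \ref{approx}, since $u_n \in HW_0^{1,p}(\Omega)\cap L^\infty(\Omega)$) — and to exploit the monotone structure of both the local and nonlocal operators against this power. First I would treat case $(i)$, $\delta_* = 1$: here $\varphi = u_n$ is admissible, and testing \eqref{auxeqn} with $u_n$ gives
\[
\|u_n\|^p = \int_\Omega f_n(x)\Big(u_n+\tfrac1n\Big)^{-\delta(x)} u_n\,dx \le \int_\Omega f(x)\, u_n^{\,1-\delta(x)}\,dx,
\]
where I have used $0 \le (u_n+\tfrac1n)^{-\delta(x)} u_n \le u_n^{\,1-\delta(x)}$ when $\delta(x)\le 1$ — valid on $\Omega_\epsilon$ by $(P_{\epsilon,\delta_*})$ — and on the compact piece $\omega_\epsilon$ the lower bound $u_n \ge C(\omega_\epsilon) > 0$ from Lemma \ref{approx} controls $(u_n+\tfrac1n)^{-\delta(x)} u_n \le C(\omega_\epsilon)^{1-\delta_*}\|u_n\|_{L^\infty}$-free bound (more precisely $\le (u_n/C(\omega_\epsilon))^{\delta_*}\cdot u_n^{1-\delta_*}$, bounded by a constant times $u_n$ or by a fixed number). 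Splitting $\Omega = \Omega_\epsilon \cup \omega_\epsilon$, on $\Omega_\epsilon$ I bound $\int_{\Omega_\epsilon} f u_n^{1-\delta(x)} \le \int_{\Omega_\epsilon} f(1 + u_n) \le \|f\|_{L^1} + \|f\|_{L^{p^{*'}}}\|u_n\|_{L^{p^*}}$ via Hölder (here $m = p^{*'}$ when $\delta_*=1$), and on $\omega_\epsilon$ the integrand is bounded by $C(\omega) f$, integrable. Using Lemma \ref{emb} to absorb $\|u_n\|_{L^{p^*}} \le C\|u_n\|$ into the left side via Young's inequality gives $\|u_n\|^p \le C + \tfrac12\|u_n\|^p$, hence the uniform bound.

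For case $(ii)$, $\delta_* > 1$, the plan is to test \eqref{auxeqn} with $\varphi = u_n^{\delta_*} \in HW_0^{1,p}(\Omega)$. The local term produces $\int_\Omega |\nabla_H u_n|^{p-2}\nabla_H u_n \cdot \delta_* u_n^{\delta_*-1}\nabla_H u_n\,dx = \delta_* \int_\Omega u_n^{\delta_*-1}|\nabla_H u_n|^p\,dx$, which after the algebraic identity $u_n^{\delta_*-1}|\nabla_H u_n|^p = \big(\tfrac{p}{\delta_*+p-1}\big)^p |\nabla_H(u_n^{(\delta_*+p-1)/p})|^p$ equals a positive constant times $\int_\Omega |\nabla_H(u_n^{(\delta_*+p-1)/p})|^p\,dx$. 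For the nonlocal term I would invoke Lemma \ref{BPalg} with $g(t) = (t^+)^{\delta_*}$ (increasing) and $G(t) = \tfrac{p}{\delta_*+p-1}(t^+)^{(\delta_*+p-1)/p}$, which gives $J_p(u_n(x)-u_n(y))(u_n(x)^{\delta_*}-u_n(y)^{\delta_*}) \ge |G(u_n(x)) - G(u_n(y))|^p$, so the nonlocal contribution dominates a constant times the Gagliardo seminorm of $u_n^{(\delta_*+p-1)/p}$. Altogether the left side controls $c\,\|u_n^{(\delta_*+p-1)/p}\|^p$ from below. On the right side, $f_n(u_n+\tfrac1n)^{-\delta(x)} u_n^{\delta_*} \le f\, u_n^{\delta_* - \delta(x)}$; on $\Omega_\epsilon$ where $\delta(x) \le \delta_*$ this is $\le f(1 + u_n^{\delta_*})$ and on $\omega_\epsilon$ it is bounded by $C(\omega)^{-\delta_*}f u_n^{\delta_*} \le C f(1 + u_n^{\delta_*})$ again (absorbing the sign of $\delta_* - \delta(x)$ using the positive lower bound). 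Thus the right side is $\le \|f\|_{L^1} + \int_\Omega f\, u_n^{\delta_*}\,dx$, and by Hölder with exponent $m$ and the Sobolev embedding applied to $v_n := u_n^{(\delta_*+p-1)/p}$,
\[
\int_\Omega f\, u_n^{\delta_*}\,dx = \int_\Omega f\, v_n^{\frac{p\delta_*}{\delta_*+p-1}}\,dx \le \|f\|_{L^m}\, \|v_n\|_{L^{p^*}}^{\frac{p\delta_*}{\delta_*+p-1}} \le C\,\|v_n\|^{\frac{p\delta_*}{\delta_*+p-1}},
\]
precisely because $m = \big(\tfrac{(\delta_*+p-1)p^*}{p\delta_*}\big)'$ makes $\tfrac{p\delta_*}{\delta_*+p-1}\cdot m' = p^*$. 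Since $\tfrac{p\delta_*}{\delta_*+p-1} < p$, Young's inequality absorbs this term into $c\|v_n\|^p$ on the left, yielding the uniform bound on $\{u_n^{(\delta_*+p-1)/p}\}$ in $HW_0^{1,p}(\Omega)$.

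The main obstacle I anticipate is the careful handling of the region $\omega_\epsilon$ (where $\delta$ may exceed $\delta_*$): there the naive bound $u_n^{\delta_*-\delta(x)} \le 1 + u_n^{\delta_*}$ fails when $\delta(x) > \delta_*$ and $u_n$ is small, so one genuinely needs the $n$-independent lower bound $u_n \ge C(\omega_\epsilon)$ from Lemma \ref{approx} to write $u_n^{\delta_*-\delta(x)} \le C(\omega_\epsilon)^{\delta_*-\delta(x)} \le \max\{1, C(\omega_\epsilon)^{\delta_*-\|\delta\|_\infty}\}$, a finite constant, making the $\omega_\epsilon$-integral simply $\le C\|f\|_{L^1}$. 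A secondary technical point is justifying that $u_n^{\delta_*}$ and $u_n^{(\delta_*+p-1)/p}$ are legitimate test functions / elements of $HW_0^{1,p}(\Omega)$; this follows from $u_n \in HW_0^{1,p}(\Omega) \cap L^\infty(\Omega)$ together with the chain rule for Sobolev functions composed with Lipschitz-on-bounded-sets powers, and from Lemma \ref{mainappos} for the boundary condition. The rest is the routine Hölder–Sobolev–Young absorption described above.
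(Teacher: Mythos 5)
Your proof follows essentially the same route as the paper: test \eqref{auxeqn} with $u_n$ when $\delta_*=1$ and with $u_n^{\delta_*}$ when $\delta_*>1$, lower-bound the nonlocal term via Lemma~\ref{BPalg}, split $\Omega=\Omega_\epsilon\cup\omega_\epsilon$, invoke the $n$-independent lower bound $u_n\ge C(\omega_\epsilon)$ from Lemma~\ref{approx} on $\omega_\epsilon$, and close with H\"older/Sobolev and Young; the exponent bookkeeping ($\delta_* m'=(\delta_*+p-1)p^*/p$, and $p\delta_*/(\delta_*+p-1)<p$) is correct.

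One concrete slip in the $\omega_\epsilon$ treatment: both in case~$(i)$ (``the integrand is bounded by $C(\omega)f$, integrable'') and in your closing ``main obstacle'' paragraph you claim that the $\omega_\epsilon$-integral reduces to $C\|f\|_{L^1(\Omega)}$, based on $u_n^{\delta_*-\delta(x)}\le C(\omega_\epsilon)^{\delta_*-\delta(x)}$. That inequality is only valid where $\delta(x)\ge\delta_*$; on $\omega_\epsilon$ the hypothesis $(P_{\epsilon,\delta_*})$ imposes no constraint on $\delta$, so $\delta(x)<\delta_*$ is possible, and there $u_n^{\delta_*-\delta(x)}$ grows with $u_n$ and cannot be bounded by $C(\omega_\epsilon)$ alone. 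The correct use of the lower bound (which the paper, and your own main-body estimate in case~$(ii)$, both employ) is to bound $(u_n+\tfrac1n)^{-\delta(x)}\le \|C(\omega_\epsilon)^{-\delta(\cdot)}\|_{L^\infty(\Omega)}$, a constant, \emph{keeping} the factor $u_n^{\delta_*}$ (resp.\ $u_n$ in case~$(i)$), which then joins the $\Omega_\epsilon$ contribution and is handled by the same H\"older/Sobolev/Young step. Since that step is already in place, the repair is cosmetic and the argument is sound; just be aware that the $\omega_\epsilon$-integral does not in general collapse to an $n$-independent $L^1$-bound.
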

\begin{proof}
\begin{enumerate}
\item[$(i)$] Taking $\varphi = u_n$ as a test function in the weak formulation of \eqref{approxeqn}, we obtain
\begin{equation}\label{api}
\begin{split}
\|u_n\|^p 
&\leq \int_{\Omega} f_n \Big(u_n + \frac{1}{n}\Big)^{-\delta(x)} u_n \, dx \\
&\leq \int_{\overline{\Omega_\epsilon} \cap \{0 < u_n \leq 1\}} f u_n^{1-\delta(x)} \, dx 
+ \int_{\overline{\Omega_\epsilon} \cap \{u_n > 1\}} f u_n^{1-\delta(x)} \, dx 
+ \int_{\omega_\epsilon} f \, \|C(\omega_\epsilon)^{-\delta(x)}\|_{L^\infty(\Omega)} u_n \, dx \\
&\leq \|f\|_{L^1(\Omega)} + \Big( 1 + \|C(\omega_\epsilon)^{-\delta(x)}\|_{L^\infty(\Omega)} \Big) \int_{\Omega} f u_n \, dx,
\end{split}
\end{equation}
where we have used that $0 < \delta(x) \leq 1$ for all $x \in \Omega_\epsilon$ and the property $u_n \geq C(\omega_\epsilon) > 0$ in $\omega_\epsilon$ from Lemma \ref{approx}. 
Since $f \in L^m(\Omega)$ with $m = (p^*)'$, applying H\"older's inequality and Lemma \ref{emb} to \eqref{api}, we obtain
\begin{equation*}
\begin{split}
\|u_n\|^p &\leq \|f\|_{L^1(\Omega)} + \Big(1 + \|C(\omega_\epsilon)^{-\delta(x)}\|_{L^\infty(\Omega)}\Big) \|f\|_{L^m(\Omega)} \|u_n\|_{L^{p^*}(\Omega)} \\
&\leq \|f\|_{L^1(\Omega)} + C \|f\|_{L^m(\Omega)} \|u_n\|,
\end{split}
\end{equation*}
for some positive constant $C$ independent of $n$. 
Hence, the sequence $\{u_n\}_{n \in \mathbb{N}}$ is uniformly bounded in $HW_0^{1,p}(\Omega)$.
\item[$(ii)$] Choosing $u_n^{\delta_*}$ as a test function in the weak formulation of \eqref{approx}, we get
\begin{equation}\label{apigrt1}
\begin{split}
\Big\|u_n^{\frac{\delta_*+p-1}{p}}\Big\|^p 
&\leq c \int_{\Omega} f_n \Big(u_n + \frac{1}{n}\Big)^{-\delta(x)} u_n^{\delta_*} \, dx \\
&\leq c \int_{\overline{\Omega_\epsilon} \cap \{0 < u_n \leq 1\}} f u_n^{\delta_* - \delta(x)} \, dx 
+ c \int_{\overline{\Omega_\epsilon} \cap \{u_n > 1\}} f u_n^{\delta_* - \delta(x)} \, dx \\
&\quad + c \int_{\omega_\epsilon} f \| C(\omega_\epsilon)^{-\delta(x)} \|_{L^\infty(\Omega)} u_n^{\delta_*} \, dx \\
&\leq c \|f\|_{L^1(\Omega)} + c \Big( 1 + \| C(\omega_\epsilon)^{-\delta(x)} \|_{L^\infty(\Omega)} \Big) \int_{\Omega} f u_n^{\delta_*} \, dx \\
&\leq c \|f\|_{L^1(\Omega)} + c \|f\|_{L^m(\Omega)} \Big\| u_n^{\frac{\delta_*+p-1}{p}} \Big\|^{\frac{p \delta_*}{\delta_*+p-1}},
\end{split}
\end{equation}
where $c>0$ is a constant independent of $n$, for some positive constant $C$ independent of $n$, where we have applied Lemma \ref{emb}, used the assumption $\|\delta\|_{L^\infty(\Omega_\epsilon)} \leq \delta_*$, and employed the fact that $u_n \geq C(\omega) > 0$ from Lemma \ref{approx}. Consequently, the sequence 
$\left\{ u_n^{\frac{\delta_*+p-1}{p}} \right\}_{n \in \mathbb{N}}$ is uniformly bounded in $HW_0^{1,p}(\Omega)$.
\end{enumerate}
\end{proof}

\begin{Lemma}\label{apun2} (Constant singular exponent)
Let $\delta:\overline{\Om}\to(0,\infty)$ be a constant function, and let $\{u_n\}_{n\in\mathbb{N}}$ denote the sequence of solutions to \eqref{approx} provided by Lemma \ref{approx}. Assume that $f\in L^m(\Om)\setminus\{0\}$ is nonnegative for some $m$. Then the following hold:
\begin{enumerate}
    \item[$(i)$] If $0<\delta<1$ and $m=\left(\frac{p^{*}}{1-\delta}\right)'$, the sequence $\{u_n\}_{n\in\mathbb{N}}$ is uniformly bounded in $HW_0^{1,p}(\Om)$.
    \item[$(ii)$] If $\delta=1$ and $m=1$, the sequence $\{u_n\}_{n\in\mathbb{N}}$ is uniformly bounded in $HW_0^{1,p}(\Om)$.
    \item[$(iii)$] If $\delta>1$ and $m=1$, the sequence $\Big\{u_n^{\frac{\delta+p-1}{p}}\Big\}_{n\in\mathbb{N}}$ is uniformly bounded in $HW_0^{1,p}(\Om)$.
\end{enumerate}
\end{Lemma}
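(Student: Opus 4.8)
The plan is to test the weak formulation of the approximate problem \eqref{approxeqn} — which, by Lemma \ref{approx}, the solution $u_n\in HW_0^{1,p}(\Omega)\cap L^\infty(\Omega)$ satisfies against every $\varphi\in HW_0^{1,p}(\Omega)$ (see \eqref{auxeqn}) — with a suitable power of $u_n$, and to estimate the resulting right-hand side. The three cases run parallel to the proof of Lemma \ref{apun1}, but since $\delta$ is now constant there is no need to split $\Omega$ near its boundary.

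For (i) and (ii) I would take $\varphi=u_n$, so that the left-hand side is exactly $\|u_n\|^p$. In case (ii) ($\delta=1$), pointwise $(u_n+\tfrac1n)^{-1}u_n\le 1$ and $f_n\le f$, so the right-hand side is at most $\|f\|_{L^1(\Omega)}$ and hence $\|u_n\|^p\le\|f\|_{L^1(\Omega)}$ at once. In case (i) ($0<\delta<1$), using $(u_n+\tfrac1n)^{-\delta}u_n\le u_n^{1-\delta}$ (both sides vanishing where $u_n=0$) together with $f_n\le f$ gives $\|u_n\|^p\le\int_\Omega f\,u_n^{1-\delta}\,dx$; since $m=\big(\tfrac{p^*}{1-\delta}\big)'$, Hölder's inequality with exponents $m$ and $m'$ together with the identity $(1-\delta)m'=p^*$ yields $\int_\Omega f\,u_n^{1-\delta}\,dx\le\|f\|_{L^m(\Omega)}\|u_n\|_{L^{p^*}(\Omega)}^{1-\delta}$, and the Sobolev embedding of Lemma \ref{emb} (in the equivalent norm \eqref{eqnorm}) gives $\|u_n\|^p\le C\|f\|_{L^m(\Omega)}\|u_n\|^{1-\delta}$; since $p>1>1-\delta$, this absorbs into a uniform bound on $\|u_n\|$.

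For (iii) ($\delta>1$) I would test with $\varphi=u_n^{\delta}$, which is admissible: because $u_n\in L^\infty(\Omega)$ and $t\mapsto|t|^{\delta-1}t$ is $C^1$ with bounded derivative on the range of $u_n$, the chain rule gives $u_n^\delta\in HW_0^{1,p}(\Omega)$ with $\nabla_H(u_n^\delta)=\delta\,u_n^{\delta-1}\nabla_H u_n$, while $|u_n^\delta(x)-u_n^\delta(y)|\le L\,|u_n(x)-u_n(y)|$ keeps its Gagliardo seminorm finite. The local term then equals $\delta\big(\tfrac{p}{\delta+p-1}\big)^p\int_\Omega|\nabla_H(u_n^{(\delta+p-1)/p})|^p\,dx$, and for the nonlocal term I would apply Lemma \ref{BPalg} with $g(t)=|t|^{\delta-1}t$, for which $G(t)=\delta^{1/p}\tfrac{p}{\delta+p-1}|t|^{(\delta-1)/p}t$, obtaining
\[
J_p\big(u_n(x)-u_n(y)\big)\big(u_n^\delta(x)-u_n^\delta(y)\big)\ \ge\ \delta\Big(\tfrac{p}{\delta+p-1}\Big)^p\,\big|u_n^{\frac{\delta+p-1}{p}}(x)-u_n^{\frac{\delta+p-1}{p}}(y)\big|^p .
\]
Collecting the two contributions and using $(u_n+\tfrac1n)^{-\delta}u_n^\delta\le 1$ and $f_n\le f$, there is $c=c(p,\delta)>0$ with $c\,\big\|u_n^{(\delta+p-1)/p}\big\|^p\le\int_\Omega f_n(u_n+\tfrac1n)^{-\delta}u_n^\delta\,dx\le\|f\|_{L^1(\Omega)}$, which is the asserted uniform bound.

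The only genuinely delicate point is case (iii): one must verify that $u_n^\delta$ is an admissible test function in $HW_0^{1,p}(\Omega)$ and, above all, bookkeep the constants so that Lemma \ref{BPalg} produces exactly the factor $\delta\big(\tfrac{p}{\delta+p-1}\big)^p$ coming from the local chain rule, so that the local and nonlocal pieces recombine into a multiple of the full norm $\|u_n^{(\delta+p-1)/p}\|^p$. Cases (i) and (ii) are routine once the Hölder exponents are matched.
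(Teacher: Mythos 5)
Your proposal is correct and follows essentially the same route as the paper: test with $u_n$ in cases (i) and (ii), and with $u_n^\delta$ in case (iii), combining the chain rule for the local term with Lemma \ref{BPalg} for the nonlocal one. The paper absorbs the constant you compute into a generic $C$ without spelling out the $\delta\bigl(\tfrac{p}{\delta+p-1}\bigr)^p$ factor, but the argument is identical.
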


\begin{proof}
\begin{enumerate}
\item[$(i)$] Taking $u_n$ as a test function in the weak formulation of \eqref{approxeqn}, we get
\[
\|u_n\|^p \leq \int_{\Om} \frac{f_n(x) u_n}{\left(u_n + \frac{1}{n}\right)^\delta}\,dx.
\]
Since $f \in L^m(\Omega)$ and $(1 - \delta)m' = p^*$, applying Lemma \ref{emb} yields
\begin{align*}
\|u_n\|^p &\leq \int_{\Om} f\, u_n^{1-\delta}\, dx 
\leq \|f\|_{L^m(\Om)} \left( \int_{\Om} u_n^{(1-\delta)m'}\, dx \right)^{\frac{1}{m'}} \\
&= \|f\|_{L^m(\Om)} \left( \int_{\Om} u_n^{p^*}\, dx \right)^{\frac{1-\delta}{p^*}} 
\leq C \|f\|_{L^m(\Om)} \|u_n\|^{1-\delta},
\end{align*}
for some constant $C>0$ independent of $n$. Consequently, we obtain
\[
\|u_n\| \leq C,
\]
for some constant $C>0$ independent of $n$. Therefore, the sequence $\{u_n\}_{n\in\mathbb{N}}$ is uniformly bounded in $HW_0^{1,p}(\Om)$.

\item[$(ii)$] By choosing $\varphi = u_n$ as a test function in the weak formulation of \eqref{approxeqn}, we deduce
\begin{equation}\label{}
\|u_n\|^p \leq \int_{\Om} \frac{f_n u_n}{u_n + \frac{1}{n}} \, dx \leq \|f\|_{L^1(\Om)}.
\end{equation}
Therefore, the sequence $\{u_n\}_{n \in \mathbb{N}}$ is uniformly bounded in $HW_0^{1,p}(\Omega)$.

\item[$(iii)$] Choosing $\varphi= u_n^\delta$ as a test function (which belongs to $HW_0^{1,p}(\Omega)$, since $\Phi(s) = s^\delta$, $s \geq 0$, is Lipschitz on bounded intervals), we obtain
\begin{equation}
\label{bound}
\Big\| u_n^{\frac{\delta + p - 1}{p}} \Big\|^p
\leq C \int_{\Omega} f_n(x) \Big(u_n + \frac{1}{n}\Big)^{-\delta} u_n^\delta \, dx 
\leq C \|f\|_{L^1(\Omega)},
\end{equation}
for some constant $C > 0$ independent of $n$. Consequently, it follows from \eqref{bound} that the sequence $\Big\{ u_n^{\frac{\delta + p - 1}{p}} \Big\}_{n \in \mathbb{N}}$ is uniformly bounded in $HW_0^{1,p}(\Omega)$.
\end{enumerate}
\end{proof}
The following gradient convergence theorem is very crucial for us to pass to the limit. Taking into account Lemma \ref{apun1} and Lemma \ref{apun2}, the proof follows the lines of the proof of \cite[Theorem A.1]{GU21}.
\begin{Theorem}[Gradient convergence theorem in the mixed case]\label{thm:A1}
Let $0<s<1<p<\infty$ and let $\delta:\overline{\Omega}\to(0,\infty)$ be a continuous function satisfying the condition $(P_{\varepsilon,\delta_*})$ for some $\varepsilon>0$ and some $\delta_*>0$. Assume that $\{u_n\}_{n\in\mathbb{N}}$ is the sequence of approximate solutions to problem~\eqref{approx} provided by Lemma~\ref{approx}, and let $u$ denote the pointwise limit of $\{u_n\}$.

If $\delta_*\ge 1$, assume further that $f\in L^{m}(\Omega)\setminus\{0\}$ is nonnegative, where
\[
m=\bigg(\frac{(\delta_*+p-1)p^*}{p\,\delta_*}\bigg)'.
\]
If, in addition, $\delta$ is a constant, then assume $f\in L^{m}(\Omega)\setminus\{0\}$ is nonnegative, where
\[
m=\left(\frac{p^*}{1-\delta}\right)' \quad \text{for } \delta\in(0,1),
\]
and $m=1$ for $\delta\ge 1$.

Then, up to a subsequence,
\[
\nabla_H u_n \longrightarrow \nabla_H u \qquad \text{pointwise almost everywhere in } \Omega.
\]
\end{Theorem}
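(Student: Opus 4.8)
The plan is to adapt the classical Boccardo--Murat-type argument for almost-everywhere convergence of gradients, as carried out in \cite[Theorem A.1]{GU21}, to the Heisenberg setting, using the a priori bounds from Lemma~\ref{apun1} and Lemma~\ref{apun2} to control the local energy. First I would fix a cut-off function $\psi\in C_c^\infty(\Omega)$ with $0\le\psi\le 1$ and $\psi\equiv 1$ on a given $\omega\Subset\Omega$, and consider the test function $\varphi_n:=(u_n-u)\,\psi$ (or a truncated version $T_k(u_n-u)\,\psi$ if integrability at the singularity forces it, exactly as in the Euclidean mixed case). By the monotone convergence established in Lemma~\ref{approx} and Remark~\ref{rmkapprox}, together with the uniform bounds of Lemmas~\ref{apun1}--\ref{apun2}, the sequence $\{u_n\}$ (or the relevant power $u_n^{(\delta_*+p-1)/p}$) is bounded in $HW_0^{1,p}(\Omega)$, hence $u_n\rightharpoonup u$ weakly in $HW^{1,p}_{\mathrm{loc}}(\Omega)$ along a subsequence, $u_n\to u$ strongly in $L^p_{\mathrm{loc}}$ and a.e. by Lemma~\ref{emb}, so $\varphi_n\to 0$ weakly in $HW_0^{1,p}(\Omega)$ and strongly in $L^p$.

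The core of the argument is to show
\[
\mathcal{I}_n:=\int_\Omega\big(|\nabla_H u_n|^{p-2}\nabla_H u_n-|\nabla_H u|^{p-2}\nabla_H u\big)\cdot\nabla_H\big((u_n-u)\psi\big)\,dx\longrightarrow 0.
\]
To do this I would use $\varphi_n$ in the weak formulation \eqref{auxeqn}, which splits $\mathcal I_n$ into: the local term coming from the equation, which equals $\int_\Omega f_n(u_n+\tfrac1n)^{-\delta(x)}(u_n-u)\psi\,dx$ minus lower-order terms where $\nabla_H\psi$ hits $(u_n-u)$ (these go to zero since $|\nabla_H u_n|^{p-1}$ is bounded in $L^{p'}_{\mathrm{loc}}$ and $u_n-u\to 0$ in $L^p_{\mathrm{loc}}$); the nonlocal term $\alpha\iint J_p(u_n(x)-u_n(y))(\varphi_n(x)-\varphi_n(y))\,d\mu$; and the subtracted term $\int_\Omega|\nabla_H u|^{p-2}\nabla_H u\cdot\nabla_H\varphi_n\,dx$, which tends to $0$ by the weak convergence $\varphi_n\rightharpoonup 0$. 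The singular right-hand side is handled by the uniform local positivity $u_n\ge u_1\ge C(\omega')>0$ on $\mathrm{supp}\,\psi$ from Lemma~\ref{approx}, giving $0\le f_n(u_n+\tfrac1n)^{-\delta(x)}\psi\le C(\mathrm{supp}\,\psi)\,f\in L^1$, so dominated convergence and $u_n-u\to 0$ a.e. kill that term. For the nonlocal term I would invoke the same device as in \cite{GU21}: the bound \eqref{bdd1}-type estimate shows $J_p(u_n(x)-u_n(y))|y^{-1}\circ x|^{-(Q+sp)/p'}$ is bounded in $L^{p'}(\mathbb H^{2N})$, while $\varphi_n(x)-\varphi_n(y)$ tends to $0$ weakly in $L^p(d\mu)$ (using Lemma~\ref{l2.3} and $\varphi_n\rightharpoonup 0$ in $HW_0^{1,p}$), so the pairing vanishes in the limit. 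Hence $\mathcal I_n\to 0$.

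Once $\mathcal I_n\to 0$, discarding the $\nabla_H\psi$ contributions (again controlled by the uniform $L^{p}_{\mathrm{loc}}$ bound on $\nabla_H u_n$ and $u_n-u\to 0$ in $L^p_{\mathrm{loc}}$) leaves
\[
\int_\Omega\big(|\nabla_H u_n|^{p-2}\nabla_H u_n-|\nabla_H u|^{p-2}\nabla_H u\big)\cdot(\nabla_H u_n-\nabla_H u)\,\psi\,dx\longrightarrow 0.
\]
The integrand is nonnegative by monotonicity of $J_p$ (Lemma~\ref{alg}), so it converges to $0$ in $L^1_{\mathrm{loc}}$, and by Lemma~\ref{alg} it dominates $C\,\psi\,(|\nabla_H u_n|+|\nabla_H u|)^{p-2}|\nabla_H u_n-\nabla_H u|^2$; a standard argument (splitting into $p\ge 2$ and $1<p<2$, using in the latter case Hölder against the uniform $L^p_{\mathrm{loc}}$ bound on $|\nabla_H u_n|+|\nabla_H u|$) then yields $\nabla_H u_n\to\nabla_H u$ in measure on $\omega$, hence a.e. along a further subsequence. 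A diagonal argument over an exhaustion $\omega_j\Subset\Omega$ upgrades this to a.e. convergence on all of $\Omega$. The main obstacle is the bookkeeping at the singular term: one must be sure that the chosen test function is admissible (it is, since $(u_n-u)\psi\in HW_0^{1,p}(\Omega)$ has compact support where $u_n$ is bounded below) and that the right-hand side is dominated uniformly in $n$ — this is exactly where the $n$-independent lower bound $C(\omega)$ of Lemma~\ref{approx} is indispensable — together with checking that no term in the mixed (local $+$ nonlocal) splitting picks up an uncontrolled contribution, which is why Lemma~\ref{l2.3} (comparing the Gagliardo energy to the horizontal Dirichlet energy) is needed to treat the nonlocal pairing.
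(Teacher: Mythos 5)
Your overall strategy is the right one and is essentially what the paper intends: the paper gives no detailed proof of Theorem \ref{thm:A1}, simply referring to the Boccardo--Murat-type argument of \cite[Theorem A.1]{GU21}, and your test-function scheme $\varphi_n=(u_n-u)\psi$ (with a truncation if needed), together with the cut-off bookkeeping, monotonicity via Lemma~\ref{alg}, convergence in measure, and a diagonal exhaustion, is exactly that scheme transplanted to $\mathbb H^N$.

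There is, however, one genuine gap, and it is precisely at the most delicate point: your treatment of the nonlocal pairing. You claim
\[
\alpha\int_{\mathbb{H}^N}\int_{\mathbb{H}^N} J_p\big(u_n(x)-u_n(y)\big)\big(\varphi_n(x)-\varphi_n(y)\big)\,d\mu \longrightarrow 0
\]
because the kernel-weighted factor $J_p(u_n(x)-u_n(y))\,|y^{-1}\circ x|^{-(Q+sp)/p'}$ is bounded in $L^{p'}(\mathbb{H}^{2N})$ while the other factor tends to $0$ weakly in $L^{p}(d\mu)$. A bounded sequence paired against a weakly null sequence need not vanish; the pairing passes to the limit only if one of the two factors is \emph{fixed} or converges \emph{strongly}. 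As written, both factors depend on $n$ and neither is asserted to converge strongly, so the conclusion does not follow from what you state. The standard repair, and presumably what \cite{GU21} does, is to add and subtract $J_p(u(x)-u(y))$: the piece
\[
\int_{\mathbb{H}^N}\int_{\mathbb{H}^N} J_p\big(u(x)-u(y)\big)\big(\varphi_n(x)-\varphi_n(y)\big)\,d\mu
\]
does vanish (now the $L^{p'}$ factor is fixed and $\varphi_n\rightharpoonup 0$), while the remaining piece, after writing $\varphi_n(x)-\varphi_n(y)=[(u_n-u)(x)-(u_n-u)(y)]\psi(x)+(u_n-u)(y)[\psi(x)-\psi(y)]$ and absorbing the $[\psi(x)-\psi(y)]$ error with the strong $L^p$ convergence of $u_n-u$, is \emph{nonnegative} by the monotonicity of $J_p$. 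One then needs no separate argument that this nonnegative nonlocal piece tends to $0$: it is squeezed between $0$ and a quantity that you have already shown tends to $0$, and the same squeeze simultaneously forces the local monotone quantity to $0$, which is what feeds your measure-convergence step. Alternatively, one could try to show that $\varphi_n\to 0$ strongly in the Gagliardo seminorm via a compact embedding $HW_0^{1,p}(\Omega)\hookrightarrow HW^{s,p}$, but that compactness is not among the lemmas the paper provides (Lemma~\ref{l2.2} gives only continuity), so the add-and-subtract route is the one to take. A secondary, smaller point: when $\delta_*>1$ (or $\delta>1$) the a priori bound of Lemma~\ref{apun1}/\ref{apun2} is on $u_n^{(\delta_*+p-1)/p}$, not on $u_n$; you should say explicitly that the $n$-independent lower bound $u_n\ge C(\omega)>0$ from Lemma~\ref{approx} and the chain rule are what convert this into a uniform $HW^{1,p}_{\mathrm{loc}}$ bound on $u_n$, which is what the weak-$HW^{1,p}_{\mathrm{loc}}$ convergence and the $L^{p'}_{\mathrm{loc}}$ bound on $|\nabla_H u_n|^{p-1}$ that you invoke actually rest on.
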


\subsection{Preliminaries for the uniqueness result}
In this section, we assume that $\delta>0$ is a constant function on $\overline{\Omega}$ unless stated otherwise. We first define the real-valued function $g_k$ by
\begin{equation*}
g_k(s) :=
\begin{cases}
\min\{s^{-\delta},\, k\}, & \text{if } s > 0,\\[2mm]
k, & \text{if } s \le 0.
\end{cases}
\end{equation*}
Next, we introduce the function $\Phi_k$, defined as a primitive of $g_k$ and normalized so that $\Phi_k(1) = 0$.

With the above definitions, we consider the functional
\[
J_k : HW^{1,p}_0(\Omega) \longrightarrow [-\infty, +\infty]
\]
given by
\begin{equation*}
J_k(\varphi) := \frac{1}{p} \int_{\Omega} |\nabla_H \varphi|^p \, dx
+ \frac{\alpha}{p} \int_{\mathbb{H}^{N}} \int_{\mathbb{H}^{N}} |\varphi(x)-\varphi(y)|^p \, d\mu
- \int_{\mathbb{H}^{N}} f(x) \, \Phi_k(\varphi) \, dx, \quad \varphi \in HW^{1,p}_0(\Omega),
\end{equation*}
where $f \in L^1(\Omega)\setminus \{0\}$ is nonnegative.

Recall that for a function $z \in HW^{1,p}_{\rm loc}(\Omega) \cap L^{p-1}(\Omega)$ with $z \ge 0$, we say that $z$ is a weak supersolution (resp. subsolution) to \eqref{meqn} if, for every $\varphi \in C^1_c(\Omega)$ with $\varphi \ge 0$, the following inequality holds:
\begin{equation*}
\int_{\Omega} |\nabla_H z|^{p-2} \nabla_H z  \nabla_H \varphi \, dx
+\alpha \int_{\mathbb{H}^{N}} \int_{\mathbb{H}^{N}} J_p(z(x)-z(y)) (\varphi(x) - \varphi(y)) \, d\mu
\;\underset{(\le)}{\ge}\; \int_{\Omega} f(x) z^{-\delta} \varphi \, dx.
\end{equation*}

Given a fixed supersolution $v$, we define $w$ as the minimizer of $J_k$ over the convex set
\[
\mathcal{K} := \{\varphi \in HW^{1,p}_0(\Omega) \,:\, 0 \le \varphi\le v \ \text{a.e. in } \Omega\}.
\]
A direct computation then yields
\begin{equation}\label{eqminxx}
\begin{split}
&\int_{\Omega} |\nabla_H w|^{p-2} \nabla_H w  \nabla_H (\psi - w) \, dx
+ \alpha\int_{\mathbb{H}^{N}} \int_{\mathbb{H}^{N}} J_p(w(x)-w(y))\big((\psi(x)-w(x)) - (\psi(y)-w(y))\big) \, d\mu \\
&\ge \int_{\Omega} f(x) \Phi_k'(w) (\psi - w) \, dx, \quad 
\text{for } \psi \in w + \big(HW^{1,p}_0(\Omega) \cap L^\infty_c(\Omega)\big) \text{ with } 0 \le \psi \le v,
\end{split}
\end{equation}
where $L^\infty_c(\Omega)$ denotes the space of $L^\infty$ functions with compact support in $\Omega$.  
With this notation, the following results can be established by arguments analogous to those in \cite[Lemma 4.1 and Theorem 4.2]{Caninoetal}.  
For clarity, we present the proof below.

\begin{Lemma}\label{lemmause}
Let $\varphi \in C_c^1(\Omega)$ with $\varphi \geq 0$. Then the function $w$ satisfies
\begin{equation}\label{ksjfskgfdfjigf}
\int_{\Omega} |\nabla_H w|^{p-2} \nabla_H w  \nabla_H \varphi \, dx 
+\alpha \int_{\mathbb{H}^N} \int_{\mathbb{H}^N} J_p(w(x)-w(y)) (\varphi(x) - \varphi(y)) \, d\mu
\geq \int_{\Omega} f(x) \, \Phi_k'(w) \, \varphi \, dx.
\end{equation}
\end{Lemma}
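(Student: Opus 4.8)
The plan is to deduce the differential inequality \eqref{ksjfskgfdfjigf} from the variational inequality \eqref{eqminxx} by testing with a suitable admissible competitor of the form $\psi = w - t\varphi$ for a small parameter $t>0$, together with a truncation of $w$ from above to stay below $v$. Since $\varphi \in C_c^1(\Omega)$ with $\varphi\ge 0$ has compact support in $\Omega$, for $t>0$ small the function $w - t\varphi$ already lies in $HW_0^{1,p}(\Omega)\cap L^\infty_c(\Omega)$-perturbations of $w$ and satisfies $w - t\varphi \le w \le v$; the only issue is the lower bound $w - t\varphi \ge 0$, which may fail where $w$ is small. To fix this, I would instead use the competitor
\[
\psi_t := \min\{w,\; (w - t\varphi)^+ + \text{(something)}\},
\]
or, more cleanly, follow the standard device: set $\psi_t = w - t\varphi + $ a correction supported where $w - t\varphi < 0$. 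The cleanest version is to take $\psi_t := (w - t\varphi)^+$, note $0 \le \psi_t \le w \le v$ so $\psi_t \in \mathcal K$ and $\psi_t - w \in HW_0^{1,p}(\Omega)\cap L^\infty_c(\Omega)$, plug $\psi = \psi_t$ into \eqref{eqminxx}, divide by $t$, and let $t\to 0^+$.

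The key steps, in order: (1) verify admissibility of $\psi_t=(w-t\varphi)^+$ in \eqref{eqminxx}; (2) write $\psi_t - w = -t\varphi + (w-t\varphi)^-$ and observe that the set $\{w - t\varphi < 0\} \subset \{w < t\|\varphi\|_\infty\}$ shrinks to a null set as $t\to 0$ (since $w\ge 0$ and $\{w=0\}$ contributes nothing once we check the integrands vanish there); (3) in the local term, expand
\[
\int_\Omega |\nabla_H w|^{p-2}\nabla_H w\cdot\nabla_H(\psi_t - w)\,dx = -t\int_\Omega |\nabla_H w|^{p-2}\nabla_H w\cdot\nabla_H\varphi\,dx + R_t,
\]
where $R_t$ is an integral over the shrinking set $\{w<t\|\varphi\|_\infty\}$ and is $o(t)$ by dominated convergence (using $\nabla_H w\in L^p_{\rm loc}$ on $\supp\varphi$ and that $\nabla_H(w-t\varphi)^- $ is controlled by $|\nabla_H w| + t|\nabla_H\varphi|$); (4) treat the nonlocal term similarly, splitting the double integral and using the algebraic bound from Lemma~\ref{alg} (or direct monotonicity of $J_p$) to show the extra contribution is $o(t)$; (5) on the right-hand side, $\Phi_k'(w) = g_k(w)$ is bounded by $k$, and $f\in L^1(\Omega)$, so
\[
\int_\Omega f\,\Phi_k'(w)(\psi_t - w)\,dx = -t\int_\Omega f\,g_k(w)\varphi\,dx + o(t)
\]
by dominated convergence; (6) divide the resulting inequality by $t>0$, let $t\to 0^+$, and flip the sign to obtain \eqref{ksjfskgfdfjigf}.

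The main obstacle is step (3)–(4): controlling the error terms supported on $\{w - t\varphi < 0\}$. On this set $\nabla_H\psi_t = 0$ a.e., so $\nabla_H(\psi_t - w) = -\nabla_H w$ there, and the local error integral is $\int_{\{w<t\|\varphi\|_\infty\}\cap\supp\varphi} |\nabla_H w|^p\,dx$, which tends to $0$ since $|\{w<t\|\varphi\|_\infty\}\cap\supp\varphi|\to |\{w=0\}\cap\supp\varphi|$ and — crucially — $\nabla_H w = 0$ a.e. on $\{w=0\}$ (a standard property of Sobolev functions). For the nonlocal term one uses that $(w-t\varphi)^+\to w^+=w$ in $HW^{s,p}$ of the relevant domain, together with the weak continuity of $\xi\mapsto J_p$-type pairings already exploited in the proof of Lemma~\ref{approx} (the demicontinuity argument), so that the nonlocal bilinear form applied to $\psi_t - w$ divided by $t$ converges to $-$ the nonlocal pairing with $\varphi$. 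Once these two limit passages are justified, the conclusion is immediate. I would remark that one may alternatively argue by a density approximation of $\varphi$ together with the $L^\infty_c$-testability built into \eqref{eqminxx}, but the truncation argument above is the most transparent and mirrors \cite[Lemma 4.1]{Caninoetal}.
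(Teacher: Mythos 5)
Your approach contains a sign error that is fatal: testing the variational inequality with a \emph{downward} perturbation $\psi_t = (w - t\varphi)^+$ produces the \emph{reverse} of \eqref{ksjfskgfdfjigf}. To see this, write $\psi_t - w = -t\varphi + (w - t\varphi)^-$ and abbreviate the linear (in its argument) forms
\[
A(\chi) := \int_{\Omega} |\nabla_H w|^{p-2}\nabla_H w\cdot\nabla_H\chi\,dx + \alpha\iint J_p(w(x)-w(y))(\chi(x)-\chi(y))\,d\mu,
\qquad
B(\chi) := \int_{\Omega} f\,\Phi_k'(w)\,\chi\,dx,
\]
so the minimality condition \eqref{eqminxx} reads $A(\psi_t - w) \ge B(\psi_t - w)$, i.e.
\[
-t\,A(\varphi) + A\big((w-t\varphi)^-\big) \;\ge\; -t\,B(\varphi) + B\big((w-t\varphi)^-\big).
\]
Rearranging and dividing by $t>0$ gives $B(\varphi) - A(\varphi) \ge \tfrac{1}{t}\big[B((w-t\varphi)^-) - A((w-t\varphi)^-)\big]$, and your error estimates (which are fine as far as they go) would send the right-hand side to $0$, yielding $A(\varphi) \le B(\varphi)$ --- the wrong direction. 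Intuitively, the minimizer $w$ of $J_k$ over $\mathcal K = \{0\le\varphi\le v\}$ can be freely perturbed downward wherever the lower constraint $w=0$ is not active, and such perturbations only detect that $w$ is a \emph{sub}solution where the upper obstacle $v$ bites; they can never produce the \emph{super}solution inequality $\langle J_k'(w),\varphi\rangle \ge 0$ that the lemma asserts.

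To get the correct sign you must perturb \emph{upward}, $\psi \approx w + t\varphi$, and the real difficulty is then that $w + t\varphi$ may exceed $v$, making it inadmissible. This is exactly the obstacle the paper's proof is built to handle: it uses the competitor $\varphi_{h,t} = \min\{w + t\varphi_h, v\}$ (with a cutoff $\varphi_h = g(w/h)\varphi$ to ensure $L^\infty_c$-admissibility), and the contribution from the set where the minimum equals $v$ is controlled precisely by using that $v$ is a weak \emph{super}solution --- see the manipulation introducing $G_v$ and $\mathcal G_v$ and the step invoking $\varphi_{h,t}-w-t\varphi_h\le 0$. Your proposal never invokes the supersolution property of $v$ at all, which is a structural signal that something is wrong: without it, there is no mechanism by which the inequality could acquire the correct orientation. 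The truncation and $o(t)$ estimates you sketch are not the issue; the choice of perturbation direction is.
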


\begin{proof}
Let us consider a real-valued function $g \in C_c^\infty(\mathbb{R})$ such that $0 \leq g(t) \leq 1$, $g(t) = 1$ for $t \in [-1,1]$, and $g(t) = 0$ for $t \in (-\infty,-2] \cup [2,\infty)$. Then, for any nonnegative $\varphi \in C_c^1(\Omega)$, define
\[
\varphi_h := g\Big(\frac{w}{h}\Big)\varphi \quad \text{and} \quad \varphi_{h,t} := \min\{w + t \varphi_h, \, v\},
\]
where $h \geq 1$ and $t > 0$.  
Since $\varphi_{h,t} \in w + \big(HW_0^{1,p}(\Omega) \cap L_c^\infty(\Omega)\big)$ and $0 \leq \varphi_{h,t} \leq v$, it follows from \eqref{eqminxx} that
\[
\begin{split}
&\int_{\Omega} |\nabla_H w|^{p-2} \nabla_H w  \nabla_H (\varphi_{h,t} - w) \, dx \\
&\quad + \int_{\mathbb{H}^N} \int_{\mathbb{H}^N} J_p(w(x)-w(y)) \big( (\varphi_{h,t}(x) - w(x)) - (\varphi_{h,t}(y) - w(y)) \big) \, d\mu \\
&\quad \geq \int_{\Omega} f(x) \, \Phi_k'(w) (\varphi_{h,t} - w) \, dx.
\end{split}
\]
By standard manipulations, and using \eqref{eqminxx} together with Lemma \ref{alg}, we deduce for $c=c(p)>0$ that
\[
\begin{split}
\mathbb{I}_1 := & \, c \int_{\Omega} (|\nabla_H \varphi_{h,t}| + |\nabla_H w|)^{p-2} |\nabla_H (\varphi_{h,t} - w)|^2 \, dx \\
& \quad + c\alpha \int_{\mathbb{H}^N} \int_{\mathbb{H}^N} \big(|\varphi_{h,t}(x) - \varphi_{h,t}(y)| + |w(x) - w(y)|\big)^{p-2} \\
& \qquad \times \big((\varphi_{h,t}(x) - w(x)) - (\varphi_{h,t}(y) - w(y))\big)^2 \, d\mu \\
& \le \int_{\Omega} \big(|\nabla_H \varphi_{h,t}|^{p-2} \nabla_H \varphi_{h,t} - |\nabla_H w|^{p-2} \nabla_H w\big) \nabla_H (\varphi_{h,t} - w) \, dx \\
& \quad + \alpha \int_{\mathbb{H}^N} \int_{\mathbb{H}^N} J_p(\varphi_{h,t}(x)-\varphi_{h,t}(y))\big((\varphi_{h,t}(x) - w(x)) - (\varphi_{h,t}(y) - w(y))\big) \, d\mu \\
& \quad - \alpha \int_{\mathbb{H}^N} \int_{\mathbb{H}^N} J_p(w(x)-w(y)) \big((\varphi_{h,t}(x) - w(x)) - (\varphi_{h,t}(y) - w(y))\big) \, d\mu \\
& \le \int_{\Omega} |\nabla_H\varphi_{h,t}|^{p-2} \nabla_H \varphi_{h,t}  \nabla_H (\varphi_{h,t} - w) \, dx \\
& \quad + \alpha \int_{\mathbb{H}^N} \int_{\mathbb{H}^N}J_p(\varphi_{h,t}(x)-\varphi_{h,t}(y)) \big((\varphi_{h,t}(x) - w(x)) - (\varphi_{h,t}(y) - w(y))\big) \, d\mu \\
& \quad - \int_{\Omega} f(x) \, \Phi_k'(w) (\varphi_{h,t} - w) \, dx.
\end{split}
\]
We can rewrite this as
\begin{equation}\label{cicciofriccio}
\begin{split}
\mathbb{I}_1 &- \int_\Omega f(x)\big(\Phi_k'(\varphi_{h,t}) - \Phi_k'(w)\big) (\varphi_{h,t} - w) \, dx \\
&\leq \int_{\Omega} |\nabla_H \varphi_{h,t}|^{p-2} \nabla_H \varphi_{h,t}  \nabla_H (\varphi_{h,t}-w) \, dx \\
&\quad + \alpha \int_{\mathbb{H}^N} \int_{\mathbb{H}^N} J_p(\varphi_{h,t}(x)-\varphi_{h,t}(y))\big((\varphi_{h,t}(x) - w(x)) - (\varphi_{h,t}(y) - w(y))\big) \, d\mu \\
&\quad - \int_\Omega f(x) \, \Phi_k'(\varphi_{h,t}) (\varphi_{h,t} - w) \, dx \\
&= \int_{\Omega} g(x) \, dx + \alpha \int_{\mathbb{H}^N} \int_{\mathbb{H}^N} \mathcal{G}(x,y) \, dx \, dy - \int_\Omega f(x) \, \Phi_k'(\varphi_{h,t}) (\varphi_{h,t} - w - t \varphi_h) \, dx \\
&\quad + t \int_{\Omega} |\nabla_H \varphi_{h,t}|^{p-2} \nabla_H \varphi_{h,t} \nabla_H \varphi_h \, dx+t\alpha \int_{\mathbb{H}^N} \int_{\mathbb{H}^N} J_p(\varphi_{h,t}(x) - \varphi_{h,t}(y))(\varphi_h(x) - \varphi_h(y)) \, d\mu \\
&\quad - t \int_\Omega f(x) \, \Phi_k'(\varphi_{h,t}) \varphi_h \, dx,
\end{split}
\end{equation}
where, we set
\[
G(x) := |\na_H\varphi_{h,t}|^{p-2}\na_H\varphi_{h,t}\nabla_H (\varphi_{h,t} - w - t \varphi_h),
\]
and
\[
\mathcal{G}(x,y) := \frac{J_p(\varphi_{h,t}(x) - \varphi_{h,t}(y)) \left[ (\varphi_{h,t}(x) - w(x) - t \varphi_h(x)) - (\varphi_{h,t}(y) - w(y) - t \varphi_h(y)) \right]}{|y^{-1} \circ x|^{Q + sp}}.
\]

For later use, we also define
\[
G_v(x) := |\na_H v|^{p-2}\na_H v \, \nabla_H (\varphi_{h,t} - w - t \varphi_h),
\]
and
\[
\mathcal{G}_v(x,y) := \frac{J_p(v(x) - v(y)) \left[ (\varphi_{h,t}(x) - w(x) - t \varphi_h(x)) - (\varphi_{h,t}(y) - w(y) - t \varphi_h(y)) \right]}{|y^{-1} \circ x|^{Q + sp}}.
\]
Then, following exactly the same arguments as in the proofs of \cite[Lemma 4.6]{Garainmn} and \cite[Lemma 4.1]{Caninoetal}, we obtain
\[
\int_{\Omega} G(x) \, dx = \int_{\Omega} G_v(x) \, dx
\quad \text{and} \quad
\int_{\mathbb{R}^N} \int_{\mathbb{R}^N} \mathcal{G}(x,y) \, dx \, dy
=
\int_{\mathbb{R}^N} \int_{\mathbb{R}^N} \mathcal{G}_v(x,y) \, dx \, dy.
\]
Returning to \eqref{cicciofriccio}, we obtain
\[
\begin{split}
\mathbb{I}_1 &- \int_\Omega f(x)\, (\Phi_k'(\varphi_{h,t}) - \Phi_k'(w)) (\varphi_{h,t} - w) \, dx \\
&\leq \int_{\mathbb{H}^{N}} \int_{\mathbb{H}^{N}} \mathcal{G}_v(x,y) \, dx \, dy 
- \int_\Omega f(x) \, \Phi_k'(\varphi_{h,t}) (\varphi_{h,t} - w - t \varphi_h) \, dx \\
&\quad + t\alpha \int_{\mathbb{H}^{N}} \int_{\mathbb{H}^{N}} J_p(\varphi_{h,t}(x) - \varphi_{h,t}(y)) (\varphi_h(x) - \varphi_h(y)) \, d\mu \\
&\quad - t \int_\Omega f(x) \, \Phi_k'(\varphi_{h,t}) \, \varphi_h \, dx.
\end{split}
\]
Using that $\varphi_{h,t} - w - t\varphi_h \leq 0$ and that $v$ is a weak supersolution to $Mz = \Phi_k'(z)$, we obtain
\[
\begin{split}
\mathbb{I}_1 &- \int_\Omega f(x)\, (\Phi_k'(\varphi_{h,t}) - \Phi_k'(w)) (\varphi_{h,t} - w) \, dx \\
&\leq t \alpha \int_{\mathbb{H}^N} \int_{\mathbb{H}^N} J_p(\varphi_{h,t}(x) - \varphi_{h,t}(y)) (\varphi_h(x) - \varphi_h(y)) \, d\mu - t \int_\Omega f(x) \, \Phi_k'(\varphi_{h,t}) \, \varphi_h \, dx.
\end{split}
\]
Since $\varphi_{h,t} - w \leq t \, \varphi_h$, we deduce
\[
\begin{split}
&\int_{\Om}|\na_H\varphi_{h,t}|^{p-2}\na_H\varphi_{h,t}\na_H\varphi_h\,dx+\alpha\int_{\mathbb{H}^N} \int_{\mathbb{H}^N} J_p(\varphi_{h,t}(x) - \varphi_{h,t}(y)) (\varphi_h(x) - \varphi_h(y)) \, d\mu \\
&\quad - \int_\Omega f(x) \, \Phi_k'(\varphi_{h,t}) \, \varphi_h \, dx
\geq - \int_\Omega f(x) \, |\Phi_k'(\varphi_{h,t}) - \Phi_k'(w)| \, |\varphi_h| \, dx.
\end{split}
\]
Passing to the limit as $t \to 0$ and applying the Lebesgue dominated convergence theorem, we obtain
\[
\int_{\Om}|\na_H w|^{p-2}\na_H w\na_H\varphi_h\,dx+\alpha\int_{\mathbb{H}^N} \int_{\mathbb{H}^N} J_p(w(x) - w(y)) (\varphi_h(x) - \varphi_h(y)) \, d\mu 
- \int_\Omega f(x) \, \Phi_k'(w) \, \varphi_h \, dx \geq 0.
\]
The desired result, i.e., \eqref{ksjfskgfdfjigf}, then follows by letting $h \to \infty$.
\end{proof}
We are now in a position to establish the \emph{weak comparison principle} as follows:
\begin{Theorem}\label{comparison}
Let $f \in L^1(\Omega) \setminus \{0\}$ be nonnegative. Suppose $u$ is a weak subsolution of \eqref{meqn} satisfying $u \leq 0$ on $\partial \Omega$, and let $v$ be a weak supersolution of \eqref{meqn}. Then,
\[
u \leq v \quad \text{a.e. in } \Omega.
\]
\end{Theorem}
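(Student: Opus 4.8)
The plan is to derive the comparison principle by a \emph{minimality} argument in the spirit of \cite{Caninoetal, Caninouni}: from the supersolution $v$ we first manufacture a weak solution $\underline u$ of \eqref{meqn} with $0 \le \underline u \le v$, and then show that every subsolution $u$ with $u \le 0$ on $\partial\Omega$ lies below $\underline u$. Chaining the two inequalities gives $u \le \underline u \le v$.

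\emph{Step 1 (the minimal solution trapped under $v$).} For each $k \in \mathbb N$, let $w_k$ be the weak solution of the non-singular problem $M_\alpha w_k = f\,g_k(w_k)$ in $HW_0^{1,p}(\Omega)$ obtained by minimizing $J_k$ over $HW_0^{1,p}(\Omega)$; since $g_k$ is bounded, $J_k$ is convex, coercive and weakly lower semicontinuous, so the minimizer exists and solves the Euler--Lagrange equation with equality. Because $g_k(v) \le v^{-\delta}$, the function $v$ is also a supersolution of $M_\alpha z = f\,g_k(z)$, and since $t \mapsto g_k(t)$ is non-increasing, the comparison principle for this non-singular equation yields $0 \le w_k \le v$; in particular $w_k$ also minimizes $J_k$ over $\mathcal K$, consistently with \eqref{eqminxx} and Lemma \ref{lemmause}. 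The same comparison applied to $w_k$ and $w_{k+1}$ (using $g_k \le g_{k+1}$) gives $w_k \le w_{k+1}$. Moreover $w_1 \not\equiv 0$, since otherwise $M_\alpha w_1 = f\,g_1(w_1)$ would read $0 = f\,g_1(0) = f$, contradicting $f \neq 0$; hence, by the weak Harnack inequality of \cite[Theorem 1.4]{Zhang}, $w_1 \ge C(\omega) > 0$ on each $\omega \Subset \Omega$, and therefore $w_k \ge C(\omega) > 0$ uniformly in $k$. Using $0 \le w_k \le v$ with $v \in HW^{1,p}_{\mathrm{loc}}(\Omega) \cap L^{p-1}(\Omega)$ together with a Caccioppoli-type estimate as in Section~3, the sequence $\{w_k\}$ is bounded in $HW^{1,p}(\omega) \cap L^{p-1}(\Omega)$ for every $\omega \Subset \Omega$; set $\underline u := \lim_k w_k \le v$. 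A gradient convergence argument following Theorem~\ref{thm:A1}, applied to the truncated data $f\,g_k(w_k)$, gives $\nabla_H w_k \to \nabla_H \underline u$ a.e.\ in $\Omega$, and since on any $\supp\varphi \Subset \Omega$ one has $g_k(w_k) = w_k^{-\delta}$ for $k$ large with $w_k^{-\delta} \le C(\supp\varphi)^{-\delta}$, passing to the limit in the weak formulations shows that $\underline u \in HW^{1,p}_{\mathrm{loc}}(\Omega) \cap L^{p-1}(\Omega)$ is a weak solution of \eqref{meqn}, with $\underline u = 0$ on $\partial\Omega$ by Lemma~\ref{mainappos} (or directly, since $0 \le \underline u \le v$).

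\emph{Step 2 (the subsolution lies below $\underline u$).} Let $u$ be a weak subsolution with $u \le 0$ on $\partial\Omega$. One first checks that $(u - \underline u)^+ \in HW_0^{1,p}(\Omega)$ and is admissible as a test function in the sub/supersolution inequalities: for $\theta > 0$ one has $(u - \underline u - \theta)^+ \le (u - \theta)^+ \in HW_0^{1,p}(\Omega)$ (using $\underline u \ge 0$) and $\{u > \underline u + \theta\} \subset \{u > \theta\}$, so the cutoff-and-exhaustion device of \cite{Caninoetal, Caninouni} (testing against $(u - \underline u - \theta)^+$ times a compactly supported cutoff, exhausting $\Omega$ and letting $\theta \to 0^+$) legitimises $(u - \underline u)^+$. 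Subtracting the weak supersolution inequality for $\underline u$ from the weak subsolution inequality for $u$, both tested against $(u-\underline u)^+$, and writing $\langle M_\alpha u - M_\alpha \underline u,\,(u-\underline u)^+\rangle$ for the resulting difference of the operator terms in \eqref{wksoleqn}, we get
\[
\big\langle M_\alpha u - M_\alpha \underline u,\, (u - \underline u)^+ \big\rangle \;\le\; \int_{\Omega} f(x)\,\big(u^{-\delta} - \underline u^{-\delta}\big)\,(u - \underline u)^+\,dx \;\le\; 0,
\]
the last inequality because on $\{u > \underline u\}$ both $u$ and $\underline u$ are strictly positive and $t \mapsto t^{-\delta}$ is strictly decreasing. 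On the other hand the left-hand side is nonnegative: its local part equals $\int_{\{u > \underline u\}} \big(|\nabla_H u|^{p-2}\nabla_H u - |\nabla_H \underline u|^{p-2}\nabla_H \underline u\big)\cdot\nabla_H(u - \underline u)\,dx \ge 0$ by Lemma~\ref{alg}, and its nonlocal part is nonnegative by the inequality $\big(J_p(a(x)-a(y)) - J_p(b(x)-b(y))\big)\big(h(x) - h(y)\big) \ge 0$ with $a = u$, $b = \underline u$, $h = (u - \underline u)^+$, exactly as used after \eqref{in-neg} (cf.\ \cite[Lemma 9]{LL}). Hence both sides vanish; in particular the local term vanishes, and the quantitative bound in Lemma~\ref{alg} forces $\nabla_H (u - \underline u)^+ = 0$ a.e.\ in $\Omega$. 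Since $\Omega$ is connected and $(u - \underline u)^+ \in HW_0^{1,p}(\Omega)$, we conclude $(u - \underline u)^+ = 0$, i.e.\ $u \le \underline u \le v$ a.e.\ in $\Omega$.

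I expect the main obstacle to be Step~1 — producing $\underline u$ as a genuine weak solution of the singular problem that is dominated by $v$. This hinges on two points that must be handled with care: uniform-in-$k$ local energy bounds for $\{w_k\}$, available only through $0 \le w_k \le v$ with $v \in HW^{1,p}_{\mathrm{loc}} \cap L^{p-1}$ rather than through the global a priori estimates of Section~3, and the almost-everywhere convergence of the gradients $\nabla_H w_k$, which requires adapting the proof of Theorem~\ref{thm:A1} to the truncated reaction terms $f\,g_k(w_k)$. A secondary but genuine technicality, shared with the comparison used inside Step~1, is the admissibility of truncated test functions such as $(u - \underline u)^+$ and $(w_k - v)^+$ in the sub/supersolution inequalities when $u$ and $v$ are only locally in $HW^{1,p}$; this is resolved by the same cutoff-and-exhaustion argument as in \cite{Caninoetal, Caninouni}.
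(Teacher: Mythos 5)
Your two-step strategy — build a minimal weak solution $\underline u$ trapped below $v$ by passing $k\to\infty$ in the truncated minimization, then compare $u$ with $\underline u$ — is a genuinely different route from the paper's. The paper never passes to the limit in $k$: for each $\varepsilon>0$ it fixes one $k$ with $\varepsilon^{-\delta}<k$, takes $w$ to be the minimizer of $J_k$ over the constrained convex set $\mathcal K=\{\varphi\in HW_0^{1,p}(\Omega):0\le\varphi\le v\}$ (so $w\le v$ is built in by construction, not proved via comparison, and $w\in HW_0^{1,p}(\Omega)$ globally), upgrades the variational inequality to the supersolution inequality via Lemma~\ref{lemmause}, tests both inequalities against $T_\tau((u-w-\varepsilon)^+)$, exploits that $u^{-\delta}=\Phi_k'(u)$ on the support of the test function (because $u>\varepsilon$ there and $\varepsilon^{-\delta}<k$), and concludes $u\le w+\varepsilon\le v+\varepsilon$ before sending $\varepsilon\to 0$.

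As written, your route has gaps at both steps. In Step 1 you minimize $J_k$ over all of $HW_0^{1,p}(\Omega)$ and then invoke a comparison principle for the truncated equation to obtain $w_k\le v$; but $v$ is only a $HW^{1,p}_{\mathrm{loc}}(\Omega)\cap L^{p-1}(\Omega)$ supersolution whose inequality is stated against $C_c^1$ tests, so $(w_k-v)^+$ is not an admissible test function without a cutoff/exhaustion argument whose boundary error terms involve $\nabla_H v$ with no global integrability — exactly the difficulty that the constrained minimization over $\mathcal K$ is designed to avoid. The limit passage $k\to\infty$ then requires a.e.\ gradient convergence for $\{w_k\}$; Theorem~\ref{thm:A1} is proved for the scheme \eqref{approxeqn} with $(u^++1/n)^{-\delta(x)}$, not the truncation $g_k$, and it rests on the global a priori estimates of Lemmas~\ref{apun1}--\ref{apun2}, which you do not have (your only uniform bound is $0\le w_k\le v$, giving at most local energy estimates). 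Most seriously, when $\delta\ge 1$ the uniform $HW_0^{1,p}(\Omega)$ bound on $w_k$ fails ($\int_0^1 g_k\to\infty$ as $k\to\infty$), so $\underline u$ lies only in $HW^{1,p}_{\mathrm{loc}}(\Omega)$; consequently, in Step~2 you cannot conclude $(u-\underline u-\theta)^+\in HW_0^{1,p}(\Omega)$ — the set $\{u>\theta\}$ can approach $\partial\Omega$, and $\nabla_H\underline u$ is not known to be $L^p$ there — and the cutoff-and-exhaustion device you invoke requires precisely the global integrability of $\nabla_H\underline u$ that is missing. The paper sidesteps all of this by comparing $u$ against $w\in HW_0^{1,p}(\Omega)$, a constrained truncated minimizer for one fixed $k$, rather than against a limit solution $\underline u$ of the singular problem.
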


\begin{proof}
For $\varepsilon > 0$ and $w$ as in Lemma \ref{lemmause}, we have
\[
(u - w - \varepsilon)^+ \in HW^{1,p}_0(\Omega).
\]
This follows directly from the fact that $w \in HW^{1,p}_0(\Omega)$ and $w \geq 0$ a.e. in $\Omega$, so that the support of $(u - w - \varepsilon)^+$ is contained in the support of $(u - \varepsilon)^+$.  
Hence, by \eqref{ksjfskgfdfjigf} and standard density arguments, we deduce:
\begin{equation}\label{eq111}
\begin{split}
&\int_{\Omega} |\nabla_H w|^{p-2} \nabla_H w \nabla_H T_\tau\big((u - w - \varepsilon)^+\big) \, dx\\
&\qquad+ \alpha \int_{\mathbb{H}^N} \int_{\mathbb{H}^N} J_p(w(x) - w(y)) \Big( T_\tau\big((u - w - \varepsilon)^+\big)(x) - T_\tau\big((u - w - \varepsilon)^+\big)(y) \Big) \, d\mu \\
&\geq \int_{\Omega} f(x) \, \Phi_k'(w) \, T_\tau\big((u - w - \varepsilon)^+\big) \, dx,
\end{split}
\end{equation}
where $T_\tau(s) := \min\{s, \tau\}$ for $s \geq 0$, and $T_\tau(-s) := -T_\tau(s)$ for $s < 0$.

Let $\varphi_n \in C_c^1(\Omega)$ be such that $\varphi_n \to (u - w - \varepsilon)^+$ in $HW^{1,p}_0(\Omega)$, and define
\[
\tilde{\varphi}_{\tau,n} := T_\tau\Big( \min\{ (u - w - \varepsilon)^+, \, \varphi_n^+ \} \Big).
\]
Then $\tilde{\varphi}_{\tau,n} \in HW^{1,p}_0(\Omega) \cap L_c^\infty(\Omega)$, and by a standard density argument, we have
\begin{equation}\nonumber
\int_{\Omega} |\nabla_H u|^{p-2} \nabla_H u \nabla_H \tilde{\varphi}_{\tau,n} \, dx
+ \int_{\mathbb{H}^N} \int_{\mathbb{H}^N} J_p(u(x) - u(y))\, (\tilde{\varphi}_{\tau,n}(x) - \tilde{\varphi}_{\tau,n}(y)) \, d\mu
\leq \int_\Omega f(x) u^{-\delta} \, \tilde{\varphi}_{\tau,n} \, dx.
\end{equation}
Passing to the limit as $n \to \infty$, we obtain
\begin{equation}\label{eq222}
\begin{split}
&\int_{\Omega} |\nabla_H u|^{p-2} \nabla_H u \nabla_H T_\tau\big((u - w - \varepsilon)^+\big) \, dx\\
&\quad+ \alpha \int_{\mathbb{H}^N} \int_{\mathbb{H}^N} J_p(u(x) - u(y)) \Big( T_\tau\big((u - w - \varepsilon)^+(x)\big) - T_\tau\big((u - w - \varepsilon)^+(y)\big) \Big) \, d\mu \\
&\leq \int_\Omega \frac{f(x)}{u^\delta} \, T_\tau\big((u - w - \varepsilon)^+\big) \, dx.
\end{split}
\end{equation}
Now, define
\[
g(t) := T_\tau\big((t - \varepsilon)^+\big) = \min\{\tau, \max\{t - \varepsilon, 0\}\}.
\]

With this notation, we can write
\begin{equation}\nonumber
\begin{split}
&|u(x) - u(y)|^{p-2} (u(x) - u(y)) \Big( T_\tau\big((u - w - \varepsilon)^+(x)\big) - T_\tau\big((u - w - \varepsilon)^+(y)\big) \Big) \\
&\quad = |u(x) - u(y)|^{p-2} (u(x) - u(y)) \big( (u(x) - w(x)) - (u(y) - w(y)) \big) \, H(x,y),
\end{split}
\end{equation}
where $H(x,y)$ is a suitable function accounting for the truncation with
\[
H(x,y) := \frac{g(u(x) - w(x)) - g(u(y) - w(y))}{(u(x) - w(x)) - (u(y) - w(y))}\,.
\]
Similarly, we have
\begin{equation}\nonumber
\begin{split}
&|w(x) - w(y)|^{p-2} (w(x) - w(y)) \Big( T_\tau\big((u - w - \varepsilon)^+(x)\big) - T_\tau\big((u - w - \varepsilon)^+(y)\big) \Big) \\
&\quad = |w(x) - w(y)|^{p-2} (w(x) - w(y)) \big( (u(x) - w(x)) - (u(y) - w(y)) \big) \, H(x,y).
\end{split}
\end{equation}
Now, subtracting \eqref{eq111} from \eqref{eq222}, choosing $\varepsilon > 0$ such that $\varepsilon^{-\beta} < k$, and applying Lemma \ref{alg}, we obtain
\begin{equation}\nonumber
\begin{split}
&c\int_{\Omega} (|\nabla_H u| + |\nabla_H w|)^{p-2} \big|\nabla_H T_\tau((u - w - \varepsilon)^+)\big|^2 \, dx \\
&\quad + c \alpha \int_{\mathbb{H}^N} \int_{\mathbb{H}^N} (|u(x) - u(y)| + |w(x) - w(y)|)^{p-2} \big( (u(x) - w(x)) - (u(y) - w(y)) \big)^2 H(x,y) \, d\mu \\
&\leq \int_{\Omega} f(x) \Big( u^{-\delta} - \Phi_k'(w) \Big) T_\tau((u - w - \varepsilon)^+) \, dx \\
&\leq \int_{\Omega} f(x) \Big( \Phi_k'(u) - \Phi_k'(w) \Big) T_\tau((u - w - \varepsilon)^+) \, dx \leq 0.
\end{split}
\end{equation}
Hence, we deduce
\[
u \leq w + \varepsilon \leq v + \varepsilon \quad \text{a.e. in } \Omega,
\]
and the desired result follows by letting $\varepsilon \to 0$.
\end{proof}

\section{Proof of the main results}
\subsection{Proof of the existence results}
\textbf{Proof of Theorem \ref{varthm1}:} We prove the theorem separately for the cases $\delta_* = 1$ and $\delta_* > 1$.
\begin{itemize}
    \item For $\delta_* = 1$, by Lemma \ref{apun1}-$(i)$, the sequence of solutions $\{u_n\}_{n \in \mathbb{N}} \subset HW^{1,p}_0(\Omega) \cap L^\infty(\Omega)$ of the problem~\eqref{approxeqn}, provided by Lemma \ref{approx}, is uniformly bounded in $HW^{1,p}_0(\Omega)$. 
    Consequently, by Lemma \ref{emb}, up to a subsequence, we have
    \[
    u_n \rightharpoonup u \quad \text{weakly in } HW^{1,p}_0(\Omega), \quad
    u_n \to u \quad \text{strongly in } L^r(\Omega) \text{ for } 1 \le r < p^*,
    \]
    and $u_n \to u \quad \text{pointwise a.e. in } \Omega$. Moreover, by Lemma~\ref{approx}, we obtain for every $K \Subset \Omega$, there exists a constant $C = C(K) > 0$ \text{ such that } $u(x) \ge C > 0$ for a.e. $x \in K$.
We have
\begin{equation}\label{eq:equazPn}
\begin{split}
&\int_{\Om}|\na_H u_n|^{p-2}\na_H u_n\na_H\varphi\,dx+\alpha \int_{\mathbb{H}^{N}}\int_{\mathbb{H}^{N}} 
\frac{J_p(u_n(x) - u_n(y)) \, (\varphi(x) - \varphi(y))}{|y^{-1} \circ x|^{Q+sp}} \, dx \, dy\\
&=\int_{\Omega} f_n(x) \left(u_n + \frac{1}{n}\right)^{-\delta(x)} \varphi \, dx,
\end{split}
\end{equation}
for all $\varphi \in C^1_c(\Omega)$.

First we observe that the pointwise limit $u \in HW^{1,p}_0(\Omega)$, and therefore $u \in L^{p-1}(\Omega)$. 
Moreover, by {Theorem \ref{thm:A1}}, up to a subsequence, we have
\[
\nabla_H u_n \to \nabla_H u \quad \text{pointwise a.e. in } \Omega.
\]
As a consequence, for every $\varphi \in C^1_c(\Omega)$, it follows that
\begin{equation}\label{loclim}
\lim_{n \to \infty} 
\int_{\Omega} |\nabla_H u_n|^{p-2} \nabla_H u_n \nabla_H \varphi \, dx
=
\int_{\Omega} |\nabla_H u|^{p-2} \nabla_H u \nabla_H \varphi \, dx.
\end{equation}

Moreover, the sequence
\[
\left\{ \frac{J_p(u_n(x) - u_n(y))}{|y^{-1} \circ x|^{\frac{Q+sp}{p'}}} \right\}_{n \in \mathbb{N}} 
\quad \text{is bounded in } L^{p'}(\mathbb{H}^{2N}),
\]
By the pointwise convergence of $u_n$ to $u$, we have
\[
\frac{J_p(u_n(x)-u_n(y))}{|y^{-1} \circ x|^{\frac{Q+sp}{p'}}} 
\to 
\frac{J_p(u(x)-u(y))}{|y^{-1} \circ x|^{\frac{Q+sp}{p'}}} 
\quad \text{pointwise a.e. in } \mathbb{H}^{2N}.
\]

Hence, by standard results, it follows that
\[
\frac{J_p(u_n(x)-u_n(y))}{|y^{-1} \circ x|^{\frac{Q+sp}{p'}}} 
\to 
\frac{J_p(u(x)-u(y))}{|y^{-1} \circ x|^{\frac{Q+sp}{p'}}} 
\quad \text{weakly in } L^{p'}(\mathbb{H}^{2N}).
\]
Next, since for $\varphi \in C^1_c(\Omega)$ we have
\[
\frac{\varphi(x) - \varphi(y)}{|y^{-1}\circ x|^{\frac{Q+sp}{p}}} \in L^p(\mathbb{H}^{2N}),
\]
it follows that
\begin{equation}\label{lhs}
\begin{split}
&\lim_{n \to +\infty} \int_{\mathbb{H}^{N}}\int_{\mathbb{H}^{N}} 
\frac{J_p(u_n(x)-u_n(y)) (\varphi(x)-\varphi(y))}{|y^{-1}\circ x|^{Q+sp}} \, dx\, dy \\
&\quad = \int_{\mathbb{H}^{N}}\int_{\mathbb{H}^{N}} 
\frac{J_p(u(x)-u(y)) (\varphi(x)-\varphi(y))}{|y^{-1}\circ x|^{Q+sp}} \, dx\, dy,
\end{split}
\end{equation}
for all $\varphi\in C^1_c(\Omega)$.

Concerning the right-hand side of \eqref{eq:equazPn}, by Lemma \ref{approx}, for any $\varphi \in C^1_c(\Omega)$ with ${\rm supp}(\varphi) = K$, there exists a constant $C = C(K) > 0$ independent of $n$ such that
\[
\left| f_n(x)\, \varphi \, \Big(u_n + \frac{1}{n}\Big)^{-\delta(x)} \right| \leq \big\| C^{-\delta(x)} \big\|_{L^\infty(\Omega)} \, |f(x) \varphi(x)| \in L^1(\Omega).
\]

By the Lebesgue dominated convergence theorem, we have
\begin{equation}\label{dct}
\lim_{n\to\infty} \int_{\Omega} f_n(x) \Big(u_n + \frac{1}{n}\Big)^{-\delta(x)} \, \varphi \, dx
= \int_{\Omega} f(x)\, u^{-\delta(x)} \, \varphi\, dx.
\end{equation}

Finally, using \eqref{loclim}, \eqref{lhs}, and \eqref{dct} in \eqref{eq:equazPn}, we obtain
\begin{equation*}
\int_{\Omega} |\nabla_H u|^{p-2} \nabla_H u \nabla_H \varphi \, dx
+ \alpha \int_{\mathbb{H}^{N}} \int_{\mathbb{H}^{N}} \frac{J_p(u(x) - u(y)) (\varphi(x) - \varphi(y))}{|y^{-1}\circ x|^{Q + sp}} \, dx \, dy
= \int_{\Omega} f(x)\, u^{-\delta(x)} \, \varphi\, dx,
\end{equation*}
for all $\varphi \in C^1_c(\Omega)$, which shows that $u \in HW_0^{1,p}(\Omega)$ is a weak solution to \eqref{meqn}.

\item Let $\delta_* > 1$. Then, by Lemma \ref{apun1}-$(ii)$, the sequence 
\[
\left\{ u_{n}^{\frac{\delta + p - 1}{p}} \right\}_{n \in \mathbb{N}}
\]
is uniformly bounded in $W^{1,p}_{0}(\Omega)$. Consequently, 
\[
u^{\frac{\delta + p - 1}{p}} \in W^{1,p}_{0}(\Omega).
\] 
Combining this with Lemma \ref{approx}, we also obtain 
\[
u \in L^{p-1}(\Omega).
\]
Now, following the previous step, for every 
$\varphi \in C^{1}_{c}(\Omega)$, we have
\[
\lim_{n \to \infty} 
\int_{\Omega} |\nabla_H u_n|^{p-2} \nabla_H u_n \nabla_H \varphi \, dx
=
\int_{\Omega} |\nabla_H u|^{p-2} \nabla_H u \nabla_H \varphi \, dx,
\]
and
\[
\lim_{n\to\infty}
\int_{\Omega} f_n \left( u_n + \frac{1}{n} \right)^{-\delta} \varphi \, dx
=
\int_{\Omega} f\, u^{-\delta} \varphi \, dx.
\]

For the nonlocal term, proceeding as in the proof of \cite[Theorem 3.6]{Caninoetal}, 
for every $\varphi \in C^{1}_{c}(\Omega)$, we obtain
\[
\lim_{n\to\infty}
\int_{\mathbb{H}^{N}} \int_{\mathbb{H}^{N}}
J_p(u_n(x)-u_n(y))\, (\varphi(x) - \varphi(y))\, d\mu
=
\int_{\mathbb{H}^{N}} \int_{\mathbb{H}^{N}}
J_p(u(x)-u(y))\, (\varphi(x) - \varphi(y))\, d\mu.
\]
Combining the three estimates above, the conclusion immediately follows.
\end{itemize}

\textbf{Proof of Theorem \ref{thm1}:}
\begin{enumerate}
\item[$(i)$] Let $0<\delta<1$. By Lemma \ref{apun2}-$(i)$, the sequence of solutions $\{u_n\}_{n\in \mathbb N} \subset HW^{1,p}_0(\Omega)$ to the problem~\eqref{approxeqn}, provided by Lemma \ref{approx}, is uniformly bounded in $HW^{1,p}_0(\Omega)$. The result then follows by proceeding along the lines of the proof of Theorem \ref{varthm1} for the case $\delta_*=1$.

\item[$(ii)$] Let $\delta=1$. By Lemma \ref{apun2}-$(ii)$, the sequence of solutions $\{u_n\}_{n\in \mathbb N} \subset HW^{1,p}_0(\Omega)$ to the problem~\eqref{approxeqn}, provided by Lemma \ref{approx}, is uniformly bounded in $HW^{1,p}_0(\Omega)$. The result then follows by proceeding along the lines of the proof of Theorem \ref{varthm1} for the case $\delta_*=1$.

\item[$(iii)$] Let $\delta>1$. By Lemma \ref{apun2}-$(iii)$, the sequence 
$\Big\{ u_n^{\frac{\delta+p-1}{p}} \Big\}_{n\in \mathbb N} \subset HW^{1,p}_0(\Omega)$ 
of the problem~\eqref{approxeqn}, provided by Lemma \ref{approx}, is uniformly bounded in $HW^{1,p}_0(\Omega)$. 
The result then follows by proceeding along the lines of the proof of Theorem \ref{varthm1} for the case $\delta_*>1$.
\end{enumerate}
\subsection{Proof of the regularity results}
\textbf{Proof of Theorem \ref{regthm}:} 
\begin{enumerate}
\item[$(i)$] Choosing the test function $\varphi = u_n^\gamma$ (with $\gamma \geq \delta_*$ to be determined later) in the weak formulation of \eqref{approxeqn}, we obtain
\begin{align}\label{M7}
&\int_{\Omega} |\nabla_H u_n|^{p-2} \nabla_H u_n \nabla_H u_n^\gamma \, dx + \int_{\mathbb{H}^N} \int_{\mathbb{H}^N} J_p(u_n(x) - u_n(y)) \left( u_n^\gamma(x) - u_n^\gamma(y) \right)\, d\mu\\
&\quad\leq \int_\Omega v_n^\gamma f_n \left(v_n + \frac{1}{n} \right)^{-\delta(x)} \, dx.
\end{align}
By the condition $(P_{\epsilon,\delta_*})$, we have
\begin{align}\label{M8}
\int_\Omega u_n^\gamma f_n \left(u_n + \frac{1}{n}\right)^{-\delta(x)} \, dx 
&\leq \int_{\Omega \cap \Omega_\epsilon^c} u_n^\gamma f \left(u_n + \frac{1}{n}\right)^{-\delta(x)} \, dx
+ \int_{\{x \in \Omega_\epsilon : u_n \leq 1\}} u_n^\gamma f \left(u_n + \frac{1}{n}\right)^{-\delta(x)} \, dx \nonumber \\
&\quad + \int_{\{x \in \Omega_\epsilon : u_n > 1\}} u_n^\gamma f \left(u_n + \frac{1}{n}\right)^{-\delta(x)} \, dx \nonumber \\
&\leq \Big\| C(\epsilon)^{-\delta(x)} \Big\|_{L^\infty(\Omega)} \int_\Omega u_n^\gamma f \, dx
+ \int_{\{x \in \Omega_\epsilon : u_n \leq 1\}} u_n^{\gamma - \delta(x)} f \, dx \nonumber \\
&\quad + \int_{\{x \in \Omega_\epsilon : u_n > 1\}} u_n^\gamma f \, dx \nonumber \\
&\leq C \left( \int_\Omega u_n^\gamma f \, dx + \int_{\{x \in \Omega_\epsilon : u_n \leq 1\}} f \, dx + \int_{\{x \in \Omega_\epsilon : u_n > 1\}} u_n^\gamma f \, dx \right) \nonumber \\
&\leq C \left( \int_\Omega u_n^\gamma f \, dx + \|f\|_{L^r(\Omega)} \right) \nonumber \\
&\leq C \|f\|_{L^m(\Omega)} \left[ 1 + \left( \int_\Omega |u_n|^{m' \gamma} \, dx \right)^{\frac{1}{r'}} \right],
\end{align}
for some constant $C>0$ independent of $n$. Using Lemma \ref{BPalg} in \eqref{M7} and applying the estimate \eqref{M8}, we deduce
\begin{align*}
    \Big\| u_n^{\frac{\gamma+p-1}{p}} \Big\|^p 
    \leq C \|f\|_{L^r(\Omega)} \left[ 1 + \left( \int_\Omega |u_n|^{m' \gamma} \, dx \right)^{\frac{1}{m'}} \right],
\end{align*}
for some constant $C>0$ independent of $n$. Then, by Lemma \ref{emb}, we obtain
\begin{align}\label{M9}
    \left( \int_\Omega u_n^{\frac{p^*(\gamma+p-1)}{p}} \, dx \right)^{\frac{p}{p^*}} 
    \leq C \|f\|_{L^m(\Omega)} \left[ 1 + \left( \int_\Omega |u_n|^{m' \gamma} \, dx \right)^{\frac{1}{m'}} \right],
\end{align}
for some constant $C>0$ independent of $n$. We choose $\gamma$ such that 
\[
\frac{p^*(\gamma+p-1)}{p} = m' \gamma, \quad \text{i.e.,} \quad \gamma = \frac{N(p-1)(m-1)}{Q-mp}.
\]
Since 
\[
\frac{Q(\delta_*+p-1)}{Q(p-1)+\delta_* p} \leq m < \frac{Q}{p},
\] 
it follows that $\gamma \ge \delta_*$ and $\frac{1}{m'} < \frac{p}{p^*}$. Therefore, inequality \eqref{M9} implies that the sequence $\{u_n\}_{n\in\mathbb{N}}$ is uniformly bounded in $L^{r'\gamma}(\Omega)$.

\item[$(ii)$] Let $k \geq 1$ and define 
\[
A(k) = \{ x \in \Omega : u_n(x) \geq k \}.
\] 
Choosing 
\[
\varphi_k(x) = (u_n - k)^+ \in HW_0^{1,p}(\Omega)
\] 
as a test function in \eqref{approxeqn}, and applying H\"older's inequality with exponents $p^{*'}$ and $p^*$, followed by Young's inequality with exponents $p$ and $p'$, together with Lemma \ref{emb}, we obtain
\begin{multline}
\label{unibdd}
\|\varphi_k\|^p 
\leq \int_{\Omega} \frac{f_n}{\big(u_n + \frac{1}{n}\big)^{\delta(x)}} \varphi_k \, dx
\leq \int_{A(k)} f(x) \varphi_k \, dx \\
\leq \left( \int_{A(k)} f^{(p^*)'} \, dx \right)^{\frac{1}{{(p^*)'}}} 
\left( \int_{\Omega} \varphi_k^{p^*} \, dx \right)^{\frac{1}{p^*}}
\leq C \left( \int_{A(k)} f^{{(p^*)'}} \, dx \right)^{\frac{1}{{(p^*)'}}} \|\varphi_k\| \\
\leq \epsilon \|\varphi_k\|^p + C(\epsilon) \left( \int_{A(k)} f^{{(p^*)'}} \, dx \right)^{\frac{p'}{{(p^*)'}}}.
\end{multline}
To obtain the second inequality above, we also used that, since $u_n \geq k \geq 1$ on $\mathrm{supp}(\varphi)$, we have
\[
\left| \frac{\varphi}{\big(u_n + \frac{1}{n}\big)^{\delta(x)}} \right| \leq \varphi \quad \text{on } \mathrm{supp}(\varphi).
\]
Here, $C$ denotes the Sobolev constant and $C(\epsilon)>0$ is a constant depending on $\epsilon \in (0,1)$. 
Note that $m > \frac{Q}{p}$ implies $m > (p^*)'$. 
Therefore, fixing $\epsilon \in (0,1)$ and using H\"older's inequality with exponents $\frac{m}{{(p^*)'}}$ and $\left(\frac{m}{p^{*'}}\right)'$, we obtain
\begin{align*}
\|\varphi_k\|^p 
&\leq C \left( \int_{A(k)} f^{{(p^*)'}} \, dx \right)^{\frac{p'}{{(p^*)'}}} 
\leq C \left( \int_{A(k)} f^m \, dx \right)^{\frac{p'}{m}} 
|A(k)|^{\frac{p'}{{(p^*)'}} \frac{1}{\left( \frac{m}{{(p^*)'}} \right)'}}.
\end{align*}
Let $h>0$ be such that $1 \leq k < h$. Then $A(h) \subset A(k)$ and for any $x \in A(h)$, we have $u_n(x) \geq h$, so that $u_n(x) - k \geq h - k$ in $A(h)$.  
Using these observations, there exists a constant $C>0$ independent of $n$ such that
\begin{multline*}
(h-k)^p |A(h)|^{\frac{p}{p^*}} 
\leq \left( \int_{A(h)} (u_n - k)^{p^*} \, dx \right)^{\frac{p}{p^*}} 
\leq \left( \int_{A(k)} (u_n - k)^{p^*} \, dx \right)^{\frac{p}{p^*}} \\
\leq C \|\varphi_k\|^p 
\leq C \|f\|_{L^m(\Omega)}^{p'} |A(k)|^{\frac{p'}{{(p^*)'}}\frac{1}{\left(\frac{m}{{(p^*)'}}\right)'}}.
\end{multline*}
Thus, for some constant $C>0$ independent of $n$, we have
\[
|A(h)| \leq C \frac{\|f\|_{L^m(\Omega)}^{\frac{p^*}{p-1}}}{(h-k)^{p^*}} |A(k)|^{\alpha}, \quad \text{where} \quad \alpha = \frac{p^* p'}{p {(p^*)'}}  \frac{1}{\left(\frac{m}{{(p^*)'}}\right)'}.
\]
Since $m > \frac{Q}{p}$, it follows that $\alpha > 1$. Hence, by \cite[Lemma B.1]{Stam}, we obtain
\[
\|u_n\|_{L^\infty(\Omega)} \leq C,
\]
for some positive constant $C$ independent of $n$. Therefore, $u \in L^\infty(\Omega)$.
    \end{enumerate}

\textbf{Proof of Theorem \ref{regthm1}:}
\begin{enumerate}
\item[$(i)$] We observe that 
\begin{itemize}
    \item for $m = \left(\frac{p^{*}}{1-\delta}\right)'$, i.e., $(1-\delta)m' = p^{*}$, we have $\gamma = \frac{(\delta+p-1)m'}{pm' - p^*} = 1$, and
    \item if $m \in \left(\left(\frac{p^{*}}{1-\delta}\right)', \frac{p^{*}}{p^{*}-p}\right)$, then $\gamma = \frac{(\delta+p-1)m'}{pm' - p^*} > 1$.
\end{itemize}

Note that $(p\gamma - p + 1 - \delta)m' = p^* \gamma$. Choosing $\varphi = u_n^{p\gamma - p + 1} \in HW_0^{1,p}(\Omega)$ as a test function in \eqref{approxeqn}, and applying Lemma \ref{BPalg}, we obtain
\begin{align*}
\|u_n^{\gamma}\|^p 
&\leq \|f\|_{L^m(\Omega)} \left( \int_{\Omega} |u_n|^{p^* \gamma} \, dx \right)^{\frac{1}{m'}}.
\end{align*}
By Lemma \ref{emb}, using the continuous embedding $HW_0^{1,p}(\Omega) \hookrightarrow L^{p^{*}}(\Omega)$ and the fact that $\frac{p}{p^{*}} - \frac{1}{m'} > 0$, we obtain
\[
\|u_n^\gamma\|_{L^{p^{*}}(\Omega)} \leq C,
\]
where $C$ is independent of $n$. Therefore, the sequence $\{u_n^\gamma\}_{n \in \mathbb{N}}$ is uniformly bounded in $L^t(\Omega)$ with $t = p^{*} \gamma$. Consequently, the pointwise limit $u$ belongs to $L^t(\Omega)$.

\item[$(ii)$] Let $k \geq 1$ and define $A(k) = \{ x \in \Omega : u_n(x) \geq k \}$. Choosing 
\[
\varphi_k(x) = (u_n - k)^+ \in HW_0^{1,p}(\Omega)
\] 
as a test function in \eqref{approxeqn}, and proceeding similarly to the proof of Theorem \ref{regthm}, the desired result follows.
\end{enumerate}

\textbf{Proof of Theorem \ref{regthm2}:}
\begin{enumerate}
\item[$(i)$] Observe that $m \in \left(1, \frac{p^{*}}{p^{*}-p}\right)$ implies 
\[
\gamma = \frac{p m'}{p m' - p^*} > 1.
\] 
Now, choosing 
\[
\varphi = u_n^{p\gamma - p + 1} \in HW_0^{1,p}(\Omega)
\] 
as a test function in \eqref{approxeqn}, and using Lemma \ref{emb} along with the continuous embedding 
\[
HW_0^{1,p}(\Omega) \hookrightarrow L^{p^*}(\Omega),
\] 
the result follows by proceeding similarly to the proof of Theorem \ref{regthm1}-$(i)$.
\item[$(ii)$] The result follows by arguments similar to those in the proof of Theorem \ref{regthm}.
\end{enumerate}

\textbf{Proof of Theorem \ref{regthm3}:}
\begin{enumerate}
\item[$(i)$] Observe that $m \in \left(1, \frac{p^{*}}{p^{*}-p}\right)$ implies 
\[
\gamma = \frac{(\delta+p-1)m'}{pm'-p^*} > \frac{\delta+p-1}{p} > 1,
\] 
since $\delta > 1$. Choosing now $\varphi = u_n^{p\gamma-p+1} \in HW_0^{1,p}(\Om)$ as a test function in \eqref{approxeqn}, and using Lemma \ref{emb} along with the continuous embedding $HW_0^{1,p}(\Om) \hookrightarrow L^{p^{*}}(\Omega)$, the result follows by arguing as in the proof of Theorem \ref{regthm1}-$(i)$.
\item[$(ii)$] The proof follows analogously to that of Theorem \ref{regthm}.
\end{enumerate}
\subsection{Proof of the uniqueness result}
\begin{proof}[Proof of Theorem \ref{thm4}]
If $u$ and $v$ are two weak solutions of \eqref{meqn} satisfying zero Dirichlet boundary conditions, then Theorem \ref{comparison} implies $u \leq v$. Similarly, one also obtains $v \leq u$.
\end{proof}

\section*{Acknowledgment}
This work is supported by ANRF research grant, file no: ANRF/ECRG/2024/000780/PMS.


\begin{thebibliography}{10}

\bibitem{Alves}
Claudianor~O. Alves, Carlos~Alberto Santos, and Thiago~Willians Siqueira.
\newblock Uniqueness in {$W_{loc}^{1, p(x)}(\Omega)$} and continuity up to portions of the boundary of positive solutions for a strongly-singular elliptic problem.
\newblock {\em J. Differential Equations}, 269(12):11279--11327, 2020.

\bibitem{AGmz}
Gurdev~Chand Anthal and Prashanta Garain.
\newblock Symmetry, existence and regularity results for a class of mixed local-nonlocal semilinear singular elliptic problem via variational characterization.
\newblock {\em Math. Z.}, 311(1):Paper No. 20, 2025.

\bibitem{ARad}
Rakesh Arora and Vicen\c tiu~D. R\u~adulescu.
\newblock Combined effects in mixed local-nonlocal stationary problems.
\newblock {\em Proc. Roy. Soc. Edinburgh Sect. A}, 155(1):10--56, 2025.

\bibitem{BalDas1}
Kaushik Bal and Stuti Das.
\newblock Regularity results for a class of mixed local and nonlocal singular problems involving distance function, 2025.

\bibitem{BGM}
Kaushik Bal, Prashanta Garain, and Tuhina Mukherjee.
\newblock On an anisotropic {$p$}-{L}aplace equation with variable singular exponent.
\newblock {\em Adv. Differential Equations}, 26(11-12):535--562, 2021.

\bibitem{Nsop}
Bego\~na Barrios, Ida De~Bonis, Mar\'ia Medina, and Ireneo Peral.
\newblock Semilinear problems for the fractional laplacian with a singular nonlinearity.
\newblock {\em Open Math.}, 13(1):390--407, 2015.

\bibitem{Vecchi}
Stefano Biagi and Eugenio Vecchi.
\newblock Multiplicity of positive solutions for mixed local-nonlocal singular critical problems.
\newblock {\em Calc. Var. Partial Differential Equations}, 63(9):Paper No. 221, 45, 2024.

\bibitem{Biroud}
Kheireddine Biroud.
\newblock Mixed local and nonlocal equation with singular nonlinearity having variable exponent.
\newblock {\em J. Pseudo-Differ. Oper. Appl.}, 14(1):Paper No. 13, 24, 2023.

\bibitem{Boc-Or}
Lucio Boccardo and Luigi Orsina.
\newblock Semilinear elliptic equations with singular nonlinearities.
\newblock {\em Calc. Var. Partial Differential Equations}, 37(3-4):363--380, 2010.

\bibitem{BP}
Lorenzo Brasco and Enea Parini.
\newblock The second eigenvalue of the fractional {$p$}-{L}aplacian.
\newblock {\em Adv. Calc. Var.}, 9(4):323--355, 2016.

\bibitem{Caninoetal}
Annamaria Canino, Luigi Montoro, Berardino Sciunzi, and Marco Squassina.
\newblock Nonlocal problems with singular nonlinearity.
\newblock {\em Bull. Sci. Math.}, 141(3):223--250, 2017.

\bibitem{Caninouni}
Annamaria Canino and Berardino Sciunzi.
\newblock A uniqueness result for some singular semilinear elliptic equations.
\newblock {\em Commun. Contemp. Math.}, 18(6):1550084, 9, 2016.

\bibitem{Canino}
Annamaria Canino, Berardino Sciunzi, and Alessandro Trombetta.
\newblock Existence and uniqueness for {$p$}-{L}aplace equations involving singular nonlinearities.
\newblock {\em NoDEA Nonlinear Differential Equations Appl.}, 23(2):Art. 8, 18, 2016.

\bibitem{CMP}
Jos\'e{} Carmona and Pedro~J. Mart\'inez-Aparicio.
\newblock A singular semilinear elliptic equation with a variable exponent.
\newblock {\em Adv. Nonlinear Stud.}, 16(3):491--498, 2016.

\bibitem{DC}
Debajyoti Choudhuri, Leandro~S. Tavares, and Du\v san~D. Repov\v~s.
\newblock A multiphase eigenvalue problem on a stratified {L}ie group.
\newblock {\em Rend. Circ. Mat. Palermo (2)}, 73(7):2533--2546, 2024.

\bibitem{var}
Philippe~G. Ciarlet.
\newblock {\em Linear and nonlinear functional analysis with applications}.
\newblock Society for Industrial and Applied Mathematics, Philadelphia, PA, 2013.

\bibitem{CRT}
M.~G. Crandall, P.~H. Rabinowitz, and L.~Tartar.
\newblock On a {D}irichlet problem with a singular nonlinearity.
\newblock {\em Comm. Partial Differential Equations}, 2(2):193--222, 1977.

\bibitem{Dama}
Lucio Damascelli.
\newblock Comparison theorems for some quasilinear degenerate elliptic operators and applications to symmetry and monotonicity results.
\newblock {\em Ann. Inst. H. Poincar\'e{} C Anal. Non Lin\'eaire}, 15(4):493--516, 1998.

\bibitem{DeCave}
Linda~Maria De~Cave.
\newblock Nonlinear elliptic equations with singular nonlinearities.
\newblock {\em Asymptot. Anal.}, 84(3-4):181--195, 2013.

\bibitem{Dhanya}
R.~Dhanya, Jacques Giacomoni, and Ritabrata Jana.
\newblock Interior and boundary regularity of mixed local nonlocal problem with singular data and its applications.
\newblock {\em Nonlinear Anal.}, 262:Paper No. 113940, 26, 2026.

\bibitem{Hitchhiker'sguide}
Eleonora Di~Nezza, Giampiero Palatucci, and Enrico Valdinoci.
\newblock Hitchhiker's guide to the fractional {S}obolev spaces.
\newblock {\em Bull. Sci. Math.}, 136(5):521--573, 2012.

\bibitem{Nmn}
G.~Ercole and G.~A. Pereira.
\newblock Fractional {S}obolev inequalities associated with singular problems.
\newblock {\em Math. Nachr.}, 291(11-12):1666--1685, 2018.

\bibitem{Garainmn}
Prashanta Garain.
\newblock On a degenerate singular elliptic problem.
\newblock {\em Math. Nachr.}, 295(7):1354--1377, 2022.

\bibitem{Garainjga}
Prashanta Garain.
\newblock On a class of mixed local and nonlocal semilinear elliptic equation with singular nonlinearity.
\newblock {\em J. Geom. Anal.}, 33(7):Paper No. 212, 20, 2023.

\bibitem{Garainmm}
Prashanta Garain.
\newblock On the regularity and existence of weak solutions for a class of degenerate singular elliptic problem.
\newblock {\em Manuscripta Math.}, 174(1-2):141--158, 2024.

\bibitem{GarainHein}
Prashanta Garain.
\newblock Nonlocal singular problem and associated sobolev type inequality with extremal in the heisenberg group, 2025.

\bibitem{GKK}
Prashanta Garain, Wontae Kim, and Juha Kinnunen.
\newblock On the regularity theory for mixed anisotropic and nonlocal {$p$}-{L}aplace equations and its applications to singular problems.
\newblock {\em Forum Math.}, 36(3):697--715, 2024.

\bibitem{GK}
Prashanta Garain and Juha Kinnunen.
\newblock Nonexistence of variational minimizers related to a quasilinear singular problem in metric measure spaces.
\newblock {\em Proc. Amer. Math. Soc.}, 149(8):3407--3416, 2021.

\bibitem{Garaincpaa}
Prashanta Garain and Tuhina Mukherjee.
\newblock Quasilinear nonlocal elliptic problems with variable singular exponent.
\newblock {\em Commun. Pure Appl. Anal.}, 19(11):5059--5075, 2020.

\bibitem{GU21}
Prashanta Garain and Alexander Ukhlov.
\newblock Mixed local and nonlocal {S}obolev inequalities with extremal and associated quasilinear singular elliptic problems.
\newblock {\em Nonlinear Anal.}, 223:Paper No. 113022, 35, 2022.

\bibitem{GUaamp}
Prashanta Garain and Alexander Ukhlov.
\newblock Singular subelliptic equations and {S}obolev inequalities on {C}arnot groups.
\newblock {\em Anal. Math. Phys.}, 12(2):Paper No. 67, 18, 2022.

\bibitem{GKR}
Sekhar Ghosh, Vishvesh Kumar, and Michael Ruzhansky.
\newblock Compact embeddings, eigenvalue problems, and subelliptic {B}rezis-{N}irenberg equations involving singularity on stratified {L}ie groups.
\newblock {\em Math. Ann.}, 388(4):4201--4249, 2024.

\bibitem{Mukanona1}
Jacques Giacomoni, Tuhina Mukherjee, and Konijeti Sreenadh.
\newblock Positive solutions of fractional elliptic equation with critical and singular nonlinearity.
\newblock {\em Adv. Nonlinear Anal.}, 6(3):327--354, 2017.

\bibitem{Koskela}
Piotr Haj\l~asz and Pekka Koskela.
\newblock Sobolev met {P}oincar\'e.
\newblock {\em Mem. Amer. Math. Soc.}, 145(688):x+101, 2000.

\bibitem{Stam}
David Kinderlehrer and Guido Stampacchia.
\newblock {\em An introduction to variational inequalities and their applications}, volume~88 of {\em Pure and Applied Mathematics}.
\newblock Academic Press, Inc. [Harcourt Brace Jovanovich, Publishers], New York-London, 1980.

\bibitem{KD}
Dharmendra Kumar.
\newblock Semilinear elliptic problems with singular terms on the {H}eisenberg group.
\newblock {\em Complex Var. Elliptic Equ.}, 64(11):1844--1853, 2019.

\bibitem{Lz}
A.~C. Lazer and P.~J. McKenna.
\newblock On a singular nonlinear elliptic boundary-value problem.
\newblock {\em Proc. Amer. Math. Soc.}, 111(3):721--730, 1991.

\bibitem{LL}
Erik Lindgren and Peter Lindqvist.
\newblock Fractional eigenvalues.
\newblock {\em Calc. Var. Partial Differential Equations}, 49(1-2):795--826, 2014.

\bibitem{Mukanona2}
Tuhina Mukherjee and Konijeti Sreenadh.
\newblock On {D}irichlet problem for fractional {$p$}-{L}aplacian with singular non-linearity.
\newblock {\em Adv. Nonlinear Anal.}, 8(1):52--72, 2019.

\bibitem{Pucciacv}
Patrizia Pucci and Letizia Temperini.
\newblock Existence for singular critical exponential {$(p,Q)$} equations in the {H}eisenberg group.
\newblock {\em Adv. Calc. Var.}, 15(3):601--617, 2022.

\bibitem{Sahu}
Subhashree Sahu, Debajyoti Choudhuri, and Dušan~D. Repovš.
\newblock On subelliptic equations on stratified lie groups driven by singular nonlinearity and weak l1 data.
\newblock {\em Asymptotic Analysis}, 143(4):953–967, October 2024.

\bibitem{WW}
Xinjing Wang and Yongzhong Wang.
\newblock Subelliptic equations with singular nonlinearities on the {H}eisenberg group.
\newblock {\em Bound. Value Probl.}, pages Paper No. 7, 22, 2018.

\bibitem{Ncvpde}
Ahmed Youssfi and Ghoulam Ould~Mohamed Mahmoud.
\newblock Nonlocal semilinear elliptic problems with singular nonlinearity.
\newblock {\em Calc. Var. Partial Differential Equations}, 60(4):Paper No. 153, 34, 2021.

\bibitem{Zhang}
Junli Zhang and Pengcheng Niu.
\newblock Regularity for mixed local and nonlocal degenerate elliptic equations in the {H}eisenberg group.
\newblock {\em J. Differential Equations}, 453:113888, 2026.

\bibitem{Zhang2}
Junli Zhang, Pengcheng Niu, and Xuewen Wu.
\newblock Higher weak differentiability to mixed local and nonlocal degenerate elliptic equations in the heisenberg group.
\newblock 12 2024.

\end{thebibliography}
\end{document}